\newtheorem{Theorem}{Theorem}[section]
\newtheorem{Prop}[Theorem]{Proposition}
\newtheorem{Thm}[Theorem]{Theorem}
\newtheorem{Cor}[Theorem]{Corollary}
\theoremstyle{definition}
\newtheorem{Bem}[Theorem]{Remark}
\newtheorem{Bsp}[Theorem]{Example}
\newcommand{\tr}{\operatorname{Tr}}
\newcommand{\Ric}{\operatorname{Ric}}
\newcommand{\Id}{\operatorname{Id}}
\newcommand{\Scal}{\operatorname{Scal}}
\newcommand{\Hess}{\operatorname{Hess}}
\newcommand{\Div}{\operatorname{div}}
\newcommand{\Int}{\operatorname{Int}}
\newcommand{\Ker}{\operatorname{Ker}}
\newcommand{\vol}{{\operatorname{vol}}}
\newcommand{\s}{\mathbb{S}}
\renewcommand{\epsilon}{\varepsilon}
\numberwithin{equation}{section}
\providecommand{\customgenericname}{}
\newcommand{\newcustomtheorem}[2]{%
  \newenvironment{#1}[1]
  {%
   \renewcommand\customgenericname{#2}%
   \renewcommand\theinnercustomgeneric{##1}%
   \innercustomgeneric
  }
  {\endinnercustomgeneric}
}
\title{On conformal-biharmonic maps and hypersurfaces}
\author{Volker Branding}
\address{Institute of Mathematics\\
University of Rostock, Ulmenstraße 69, 18057 Rostock, Germany}
\email{volker.branding@uni-rostock.de}
\author{Simona Nistor}
\address{Faculty of Mathematics\\
Al.I. Cuza University of Iasi, Blvd. Carol I, 11, 700506 Iasi, Romania}
\email{nistor.simona@ymail.com}
\author{Cezar Oniciuc}
\address{Faculty of Mathematics\\
Al.I. Cuza University of Iasi, Blvd. Carol I, 11, 700506 Iasi, Romania}
\email{oniciucc@uaic.ro}
\thanks{The first author gratefully acknowledges the support of the Austrian Science Fund (FWF) through the project ``Geometric Analysis of Biwave Maps'' (P34853).
}
\date{\today}
\subjclass[2010]{58E20}
\subjclass[2010]{58E20; 53C43; 53C42}
\keywords{conformal-biharmonic maps; conformal-biharmonic hypersurfaces; stability; Einstein manifolds}
\begin{document}

\begin{abstract}
In this article we initiate a thorough geometric study of the conformal-bienergy functional which consists 
of the standard bienergy augmented by two additional curvature terms. The conformal-bienergy is
conformally invariant in dimension four and its precise structure is motivated by the Paneitz operator
from conformal geometry. The critical points of the conformal-bienergy are called conformal-biharmonic maps.

Besides establishing a number of basic results on conformal-biharmonic maps,
we pay special attention to conformal-biharmonic hypersurfaces in space forms.
For hypersurfaces in spheres, we determine all conformal-biharmonic hyperspheres and then we classify all conformal-biharmonic generalized Clifford tori.
Moreover, in sharp contrast to biharmonic hypersurfaces, we show that there also exist conformal-biharmonic hypersurfaces in hyperbolic spaces, pointing out a fundamental difference between biharmonic and conformal-biharmonic hypersurfaces.

Finally, we study the stability of conformal-biharmonic maps in several contexts. In particular, we analyze the stability of conformal-biharmonic hyperspheres in spheres and explicitly compute their index and nullity. For the equator $\mathbb{S}^m$ of the unit Euclidean sphere $\mathbb{S}^{m+1}$, which is trivially stable as a biharmonic map, we show that its conformal-biharmonic index is $0$, i.e., it is stable, for $m\leq 4$, whereas for $m\geq 5$ the conformal-biharmonic index equals $m+2$.
\end{abstract}

\maketitle

\section{Introduction}
The central object in the geometric calculus of variations is an energy functional whose precise form is most often fixed by the invariance under various symmetry operations. For maps between Riemannian manifolds, one of the most prominent geometric variational problems is the one corresponding to harmonic maps. In order to define harmonic maps, we consider two given Riemannian manifolds \((M^m,g)\) and \((N^n,h)\), with $M$ closed. Then, on the Fr\'echet manifold of all smooth maps \(\phi\colon M\to N\), we define the \emph{energy functional} by
\begin{align}
\label{energy}
E(\phi):=\frac{1}{2}\int_M|d\phi|^2 \ v_g.
\end{align}
The critical points of \eqref{energy} are characterized by the vanishing of the so-called \emph{tension field}
which is defined by
\begin{align*}
\tau(\phi):=\tr\bar\nabla d\phi, \qquad \tau(\phi)\in C\left(\phi^{-1}TN\right),
\end{align*}
where \(\bar\nabla\) represents the connection on the pull-back bundle \(\phi^{-1}TN\).
Solutions of \(\tau(\phi)=0\) are called \emph{harmonic maps}. The harmonic map equation is a semilinear elliptic partial differential equation of second order and many results concerning the existence and the qualitative properties of harmonic maps have been established over the years. 
We just want to mention the famous result of Eells and Sampson
which ensures the existence of harmonic maps from closed Riemannian manifolds to closed Riemannian manifolds of non-positive sectional curvature (see \cite{MR164306}). In order to prove this result, Eells and Sampson made use of the heat-flow method.
For the current status of research on harmonic maps we refer to the survey \cite{MR2389639} and the book \cite{MR2044031}.

A higher order generalization of harmonic maps that has received much attention in recent years is given by biharmonic maps. These maps also arise as the solutions of a geometric variational problem. Here, one considers the \emph{bienergy functional} for maps between Riemannian manifolds, which is given by
\begin{align}
\label{bienergy}
E_2(\phi):=\frac{1}{2}\int_M|\tau(\phi)|^2 \ v_g,
\end{align}
where $M$ is closed. The critical points of \eqref{bienergy} are called \emph{biharmonic maps} and are characterized by the vanishing of the \emph{bitension field} \(\tau_2(\phi)\), as it was proved by Jiang \cite{MR886529}
\begin{align}
\label{bitension}
0=\tau_2(\phi):=-\bar\Delta\tau(\phi)-\tr R^N(d\phi(\cdot), \tau(\phi))d\phi(\cdot).
\end{align}
The biharmonic map equation \(\tau_2(\phi)=0\) constitutes a semilinear elliptic partial differential equation of fourth order and the larger number of derivatives leads to additional technical difficulties in its analysis.
A direct inspection of the biharmonic map equation reveals that every harmonic map is automatically a solution of the biharmonic map equation. Hence, in the analysis of biharmonic maps, one is very much interested
in constructing biharmonic maps that are non-harmonic, which are called \emph{proper biharmonic}. In the case that \(M\) is closed and \(N\) has non-positive sectional curvature, the maximum principle implies that every biharmonic map is necessarily harmonic (see \cite{MR886529}) and, due to this reason, most of the research on biharmonic maps considers the case of a target manifold with positive curvature. If one tries to prove an existence result for biharmonic maps via the heat-flow method, one faces the difficulty that the bienergy \eqref{bienergy} is not coercive. One way to circumvent this problem is to assume that the target manifold has non-positive sectional curvature. Under this assumption and for \(\dim M\leq 4\), Lamm successfully studied the biharmonic map heat flow in \cite{MR2124627}, but due to the curvature assumption the limiting map will be harmonic. We also note that when $M$ is closed, any harmonic map is biharmonic stable, i.e, it is stable with respect to $E_2$. For the current status of research on biharmonic maps we refer to the book \cite{MR4265170} and for biharmonic hypersurfaces one may consult \cite{MR4410183}.

Comparing the symmetries of the energy functional \eqref{energy} with those of the bienergy functional \eqref{bienergy},
one can easily check that both functionals are invariant under isometries of the domain. However, regarding the invariance under conformal transformations, it turns out that the energy \eqref{energy} is invariant under conformal transformations of the domain in dimension two, but the bienergy \eqref{bienergy} does not enjoy conformal invariance in any dimension. Hence, from the point of view of conformal geometry, one can argue that the bienergy \eqref{bienergy} is not the most appropriate generalization of the energy \eqref{energy} as one of the fundamental symmetries is lost.

Fortunately, this drawback can be remedied by augmenting the bienergy \eqref{bienergy} with two additional lower order terms as follows (see \cite{MR2722777})
\begin{align}
\label{bienergy-conformal}
E^c_2(\phi):=\frac{1}{2}\int_M \left(|\tau(\phi)|^2+\frac{2}{3}\Scal |d\phi|^2-2\tr\langle d\phi\left(\Ric(\cdot)\right), d\phi(\cdot)\rangle\right) \ v_g.
\end{align}
We call \eqref{bienergy-conformal} the \emph{conformal-bienergy functional}.
The precise structure of \eqref{bienergy-conformal} is inspired by the Paneitz operator $P_g$ from conformal geometry, which is a fourth order elliptic operator acting on functions and which behaves well under conformal transformations of the domain when $\dim M =4$. The Paneitz equation $P_g(f)=0$ is the Euler-Lagrange equation of a scalar energy functional and moreover, when the target manifold is one-dimensional, the conformal-bienergy \eqref{bienergy-conformal} reduces exactly to this Paneitz energy. For more details on the Paneitz operator we refer to \cite[Chapter 4]{MR2521913}, see also \cite{MR2393291} for the original manuscript of Paneitz. For the sake of completeness, we will show by an explicit calculation that, if \(\dim M=4\), the conformal-bienergy \eqref{bienergy-conformal} is invariant under conformal transformations (see \cite{MR2722777}). Hence, one can argue that, from the point of view of conformal geometry,
the conformal-bienergy \eqref{bienergy-conformal} is the appropriate higher order generalization of the classical energy functional \eqref{energy}.

The critical points of the conformal-bienergy \eqref{bienergy-conformal} are those who satisfy (see \cite{MR2722777})
\begin{align}
	\label{eq-c-biharmonic-intro}
	0=\tau_2^c(\phi) & := \tau_2(\phi)-\frac{2}{3}\Scal\tau(\phi)+2\tr(\bar \nabla d\phi)(\Ric(\cdot),\cdot)+\frac{1}{3}d\phi(\nabla\Scal).
\end{align}
Solutions of \eqref{eq-c-biharmonic-intro} are called \emph{conformal-biharmonic maps}, or simply 
\emph{c-biharmonic maps}, and this article will provide the first systematic geometric investigation of such kind of maps.

The conformal-bienergy functional and the corresponding Euler-Lagrange equation were first addressed in the PhD thesis of B\'{e}rard \cite{MR2722777} for $\dim M=4$, 
see also the articles \cite{MR2449641} and \cite{MR3028564} originating from that thesis. 
B\'{e}rard used the terminology \emph{conformal harmonic maps} for the critical points of \eqref{bienergy-conformal} but, since they extend the (standard) biharmonic maps rather than harmonic maps, the terminology conformal-biharmonic map seems more appropriate even if $E_2^c$ is conformally invariant only when $\dim M=4$.

Besides the behavior of conformal-biharmonic maps under conformal transformations, there is a second difference compared to biharmonic maps: a harmonic map is not automatically a solution of the c-biharmonic map equation and thus it is interesting to see when a harmonic map is c-biharmonic and conversely. Another major deference is that a harmonic map, which is trivially biharmonic stable, is not automatically stable with respect to $E_2^c$, when it happens to be also c-biharmonic, as we will see in this paper.

There is a number of articles that already investigated some particular aspects of the conformal-bienergy functional. In the following we give a short overview of some of them. The conformal-bienergy appears in the bubbling analysis of (standard) biharmonic maps: if one considers a sequence of biharmonic maps in four-dimensional spaces, then quasi-biharmonic spheres separate
and these are a special case of conformal-biharmonic maps, for more details see the articles of Lamm \cite{MR2124627} and Wang \cite{MR2094320}. By making an assumption on both the Yamabe-invariant and the Q-curvature of the domain and assuming non-positive sectional curvature on the target, Biquard and Madani were able to extend the heat flow approach of Lamm (see \cite{MR2124627}) to the case of c-biharmonic maps from a four-dimensional domain (see \cite{MR2996776}). Gastel and Nerf studied an extrinsic version of the conformal-bienergy for maps into spheres in \cite{MR3070553}. Moreover, Lin and Zhu recently studied the uniqueness of c-biharmonic maps on locally conformally flat four-dimensional domains (see \cite{MR4847311}).

Let us give an overview of the main results obtained in this article.
After reviewing the basic aspects of the conformal-bienergy, we focus on c-biharmonic hypersurfaces in space forms of arbitrary dimension. First, we establish some general properties of c-biharmonic hypersurfaces in space forms. In particular, we prove a link between c-biharmonic hypersurfaces and Willmore hypersurfaces (Theorem \ref{thm-Willmore}). Using this, we observe that one important difference with respect to the biharmonic case is that there exist examples of c-biharmonic hypersurfaces with $4$ constant distinct principal curvatures in Euclidean spheres, including both minimal and non-minimal cases (Remark \ref{remark-4-curvatures}), whereas all known examples of proper biharmonic hypersurfaces have $1$ or $2$ constant distinct principal curvatures. Moreover, a rigidity result for c-biharmonic Einstein hypersurfaces is obtained (Theorem \ref{c-biharmonic-umbilical}), revealing the importance of $\dim M=4$. Then, we explicitly classify the c-biharmonic hypersurfaces in Euclidean spaces, Euclidean spheres and hyperbolic spaces, with $1$ or $2$ constant distinct principal curvatures. 


In contrast to proper biharmonic small hyperspheres in a unit Euclidean sphere $\mathbb{S}^{m+1}$, c-biharmonic small hyperspheres $\mathbb{S}^{m}(r)$ of radius $r$ exist only for $m\leq 4$, as is demonstrated by the following 

\begin{customthm}{\ref{thm:inclusion-sphere}}
The hypersphere $\s^m(r)$ is c-biharmonic in $\s^{m+1}$ if and only if
	\begin{enumerate}
		\item [i)] $r=1$, i.e., $\s^m(r)$ is totally geodesic in $\s^{m+1}$ and for any $m\geq 1$;
		\item [ii)] $m=1$ or $m=3$ and $r=1/\sqrt{2}$;
		\item [iii)] $m=2$ and $r=1/\sqrt{3}$;
		\item [iv)] $m=4$ and $r=\sqrt{3}/2$.
	\end{enumerate}
\end{customthm}

In the case of c-biharmonic generalized Clifford tori, we obtain their complete classification which shows the peculiarity of $m=4$.

\begin{customthm}{\ref{thm:c-biharmonic-clifford-complete}}
	The generalized Clifford torus $\s^{m_1}\left(r_1\right)\times \s^{m_2}\left(r_2\right)$, where  $m_1+m_2=m$ and $r_1^2+r_2^2=1$, is c-biharmonic in $\s^{m+1}$ if and only if one of the following cases holds
	\begin{itemize}
		\item[i)] $m_1=m_2=2$ and either 
		$$
		r_1^2=r_2^2=\frac{1}{2}, \qquad\text{or}\qquad r_1^2=\frac{1}{2}\left(1-\frac{1}{\sqrt{3}}\right),\  r_2^2=\frac{1}{2}\left(1+\frac{1}{\sqrt{3}}\right).
		$$
		\item[ii)] $m_1\neq 2$ or $m_2\neq 2$ and 
		$$
		r_1^2=\frac{T_\ast}{1+T_\ast}, \ r_2^2=\frac{1}{1+T_\ast},
		$$
		where $T_\ast$ is the unique solution of the polynomial equation
		\begin{equation*}
			a_3T^3 + a_2T^2+a_1T+a_0=0,
		\end{equation*}
		with the coefficients given by
		\begin{equation*}
			\left\{
			\begin{array}{l}
				a_0=m_1\left(2m_1^2-5m_1+6\right)\\\\
				a_1=m_1\left(\left(m_1-m_2\right)^2+\left(m_1-\frac{11}{2}\right)^2+\left(m_2-3\right)^2-\frac{133}{4}\right)\\\\
				a_2=-m_2\left(\left(m_2-m_1\right)^2+\left(m_2-\frac{11}{2}\right)^2+\left(m_1-3\right)^2-\frac{133}{4}\right)\\\\
				a_3=-m_2\left(2m_2^2-5m_2+6\right)
			\end{array}
			\right..
		\end{equation*}	
	\end{itemize}
\end{customthm}

One of the major differences between biharmonic and c-biharmonic submanifolds is the fact that there exist
c-biharmonic hypersurfaces in hyperbolic space, while there are no known proper biharmonic hypersurfaces
in spaces of non-positive constant sectional curvature. 
This fact is illustrated by the following result which can be seen as a counterpart of Theorem \ref{thm:inclusion-sphere}.

\begin{customthm}{\ref{prop-c-biharmonic-equidistant}}
The hypersurface $M^m=\left\{\bar{x}\in\mathbb{H}^{m+1}\ | \ x^{1}=r\geq 0\right\}$ is c-biharmonic in $\mathbb{H}^{m+1}$ if and only if 
\begin{itemize}
	\item[i)]  $r=0$, i.e., $M^m$ is totally geodesic in $\mathbb{H}^{m+1}$;
	\item[ii)] $m\geq 5$ and
	$$
	r^2=\frac{2m^2-11m+6}{6m}.
	$$ 
\end{itemize} 
\end{customthm}

As we have an explicit geometric description of c-biharmonic hyperspheres, we also investigate their stability. In order to approach this question, we compute the Jacobi operator associated with c-biharmonic maps $\phi:\mathbb{S}^m(r)\to\mathbb{S}^{n}$, which is the following linear, elliptic differential operator of order four
\begin{align}
\label{eq:intro-jacobi}
J^c_2(V) = &\bar\Delta\bar\Delta V+\bar\Delta\left(\tr \langle V,d\phi(\cdot)\rangle d\phi(\cdot)-|d\phi|^2 V \right)+2\langle d\tau(\phi),d\phi\rangle V+|\tau(\phi)|^2V \nonumber\\
\nonumber& - 2\tr \langle V,d\tau(\phi)(\cdot)\rangle d\phi(\cdot)-2\tr\langle \tau(\phi),dV(\cdot)\rangle d\phi(\cdot)-\langle \tau(\phi),V\rangle \tau(\phi)+\tr\langle d\phi(\cdot),\bar{\Delta}V\rangle d\phi(\cdot)\\
\nonumber& + \tr\langle d\phi(\cdot),\left(\tr\langle V,d\phi(\cdot)\rangle d\phi(\cdot)\right)\rangle d\phi(\cdot)-2|d\phi|^2\tr\langle d\phi(\cdot),V\rangle d\phi(\cdot)+2\langle dV,d\phi\rangle \tau(\phi)\\
 & - |d\phi|^2\bar{\Delta}V+|d\phi|^4V
+\frac{2}{3}\frac{(m-1)(m-3)}{r^2}\left(\bar\Delta V+\tr\langle d\phi(\cdot),V\rangle d\phi(\cdot)-|d\phi|^2V\right),
\end{align}
where \(V\in C(\phi^{-1}T\mathbb{S}^{m+1})\).
For a closed domain, the spectrum of \eqref{eq:intro-jacobi} is discrete 
\begin{align*}
\lambda_1<\lambda_2<\cdots <\lambda_j<\cdots
\end{align*}
and tends to \(+\infty\) as \(j\to\infty\).

By \(Q_j\) we denote the eigenspace associated with the eigenvalue \(\lambda_j\).
Then, we define
\begin{align*}
\operatorname{Index}(\phi):=\sum_{\lambda_j<0}\dim (Q_j).
\end{align*}
In addition, we define the nullity of \(\phi\) as follows
\begin{align*}
\operatorname{Nullity}(\phi):=\dim\{V\in C(\phi^{-1}TN) \ | \ J^c_2(V)=0\}.
\end{align*}
We say that a c-biharmonic map is \emph{stable} if its index is zero, i.e., \(\operatorname{Index}(\phi)=0.\) We recall that any map from a closed manifold which happens to be both harmonic and c-biharmonic is trivially biharmonic stable but not necesarily c-biharmonic stable.


Now we are ready to present the index and nullity of the c-biharmonic hyperspheres
provided by Theorem \ref{thm:inclusion-sphere}. For the stability of c-biharmonic great hyperspheres we have

\begin{customthm}{\ref{thm:stability-equator}}
  We consider $\iota:\mathbb{S}^m\to\mathbb{S}^{m+1}$ the canonical inclusion of the totally geodesic hypersphere $\mathbb{S}^m$, thought of as a c-biharmonic map. Then, we have
  \begin{itemize}
	\item [i)] if $m\in\{1,2,3,4\}$, then the index of $\mathbb{S}^m$ is $0$, i.e., $\mathbb{S}^m$ is stable;
    \item[ii)] if $m\geq 5$, then the index of $\mathbb{S}^m$ is $m+2$.
  \end{itemize}
\end{customthm}

This is quite surprising as there is no stable closed proper biharmonic submanifold in a Euclidean sphere (see \cite{MR886529}). 

We also note that, according to a result of Xin (see \cite{MR0587168}), the stability with respect to the energy functional $E$ of a non-constant harmonic map from $\mathbb{S}^m$ in an arbitrary manifold $N^n$ forces $m$ to be less or equal to $2$. But a harmonic map from $\mathbb{S}^m$ is also c-biharmonic and, as we will see in Theorem \ref{xin-generalization}, its stability with respect to $E_2^c$ can occur only for $m$ less or equal to $4$. In particular, the identity map of $\mathbb{S}^m$, which is both armonic and c-harmonic, is stable with respect to $E$ for $m$ less or equal to $2$ (see \cite{MR0375386}), but is stable with respect to $E_2^c$ for $m$ less or equal to $4$ (Corollary \ref{stability-identity}). Therefore, there are situations when the conformal-bienergy functional $E^c_2$ has better stability properties than the classical energy $E$.

For the stability of c-biharmonic small hyperspheres we obtain

\begin{customthm}{\ref{thm:stability-hypersphere}}
We consider $\iota:\mathbb{S}^m(r)\to\mathbb{S}^{m+1}$ the canonical inclusion of the small hypersphere of radius $r$. We have
\begin{itemize}
		\item [i)] if $m=1$ or $m=3$ and $r=1/\sqrt{2}$, then the index of $\mathbb{S}^m(r)$ is $1$ and its nullity is $(m+1)(m+2)/2$;
		\item[ii)] if $m=2$ and $r=1/\sqrt{3}$, then the index of $\mathbb{S}^m(r)$ is $1$ and its nullity is $6$;
		\item[iii)] if $m=4$ and $r=\sqrt{3}/2$, then the index of $\mathbb{S}^m(r)$ is $1$ and its nullity is $20$.
\end{itemize}
\end{customthm}

Our results clearly emphasize that c-biharmonic maps have a rich structure
and, although the c-bienergy only differs from the bienergy by lower order terms, the results show that biharmonic and conformal-biharmonic maps are the solutions of two very different variational problems. We believe that c-biharmonic maps are an interesting alternative to the biharmonic maps. 

Throughout this article we will use the following sign conventions: 
for the Riemannian curvature tensor field we use 
$$
R(X,Y)Z=\left[\nabla_X,\nabla_Y\right]Z-\nabla_{[X,Y]}Z,
$$ 
for the Ricci tensor field 
$$
g(\Ric(X),Y)=\Ric(X,Y)=\tr \left\{Z\to R(Z,X)Y\right\},
$$
and the scalar curvature is given by
$$
\Scal=\tr\Ric.
$$
The trace is taken with respect to the domain metric and we write $\tr$ instead of $\tr_g$. In general, for the connection $\nabla^\phi$ on the pull-back bundle $\phi^{-1} TN$ we use the symbol $\bar{\nabla}$.
For the rough Laplacian on the pull-back bundle $\phi^{-1} TN$ we employ the geometers sign convention
$$
\bar\Delta = -\tr\left(\bar\nabla\bar\nabla-\bar\nabla_\nabla\right).
$$
In particular, this implies that the Laplace operator has a non-negative spectrum. For the metric $g$ on $TM$ and also for the metric on the other vector bundles involved, we use the same symbol $\langle\cdot,\cdot\rangle$. All manifolds are assumed to be smooth, connected and without boundary. Thus, in our paper, all compact manifolds are closed.  

\section{Preliminaries on conformal-biharmonic maps}

\subsection{First variation formula}
Let $\left(M^m,g\right)$ and $\left(N^n,h\right)$ be two fixed Riemannian manifolds and assume, for simplicity, that $M$ is compact. On the Fr\'echet manifold of all smooth maps between $M$ and $N$ we define the following energy functional
\begin{align*}
E_2^c(\phi) & = \frac{1}{2}\int_M \left(|\tau(\phi)|^2+\frac{2}{3}\Scal |d\phi|^2-2\tr\langle d\phi\left(\Ric(\cdot)\right), d\phi(\cdot)\rangle\right) \ v_g\\
& = E_2(\phi)+\int_M \left(\frac{1}{3}\Scal |d\phi|^2-\tr\langle d\phi\left(\Ric(\cdot)\right), d\phi(\cdot)\rangle\right) \ v_g. \nonumber 
\end{align*} 
We may say that $E_2^c$ is $E_2$ with a small correction. The additional terms of first order render $E_2^c$ conformally invariant in dimension $m=4$. 

We call $E_2^c$ the conformal-bienergy and its critical points conformal-biharmonic maps, for short c-biharmonic maps. Moreover, when $\phi:\left(M^m,g\right)\to\left(N^n,h\right)$ is c-biharmonic and also an isometric immersion, then we say that $M$ is a \emph{c-biharmonic submanifold} of $N$.

\begin{Prop}\label{prop-energies-equal}
The conformal-bienergy coincides with the bienergy functional in each of the following cases:
\begin{itemize}
	\item [i)]  if the dimension of the domain manifold is $m=1$;
	\item [ii)] if $m=3$ and $M^3$ is an Einstein manifold;
	\item[iii)] if the domain manifold $M^m$ is Ricci-flat.
\end{itemize}
\end{Prop}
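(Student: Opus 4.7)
The plan is to observe that since
$$
E_2^c(\phi)=E_2(\phi)+\int_M\Bigl(\tfrac{1}{3}\Scal\,|d\phi|^2-\tr\langle d\phi(\Ric(\cdot)),d\phi(\cdot)\rangle\Bigr)\,v_g,
$$
it suffices to verify that the integrand
$$
F:=\tfrac{1}{3}\Scal\,|d\phi|^2-\tr\langle d\phi(\Ric(\cdot)),d\phi(\cdot)\rangle
$$
vanishes pointwise on $M$ under each of the three hypotheses. None of the cases requires integration by parts or any global argument; they all reduce to an algebraic identity that holds at every point.

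For case i), when $m=1$ both the Ricci tensor and the scalar curvature vanish identically, so each of the two summands in $F$ is zero.

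For case ii), we use the Einstein condition $\Ric(X)=\lambda X$ for some function $\lambda$ (which is actually constant when $m\ge 3$, but we do not need this). Tracing gives $\Scal=m\lambda$. Picking a local orthonormal frame $\{e_i\}$ of $M$, the second term becomes
$$
\tr\langle d\phi(\Ric(\cdot)),d\phi(\cdot)\rangle=\sum_i\langle d\phi(\lambda e_i),d\phi(e_i)\rangle=\lambda\,|d\phi|^2,
$$
while the first term equals $\tfrac{1}{3}(m\lambda)|d\phi|^2$. For $m=3$ these coincide and $F=0$.

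For case iii), $\Ric=0$ yields both $\Scal=0$ and $d\phi(\Ric(e_i))=0$ for every frame vector, so trivially $F=0$. There is really no obstacle here: the only thing to watch is to keep the sign and normalization of the trace consistent with the conventions declared in the introduction (trace over the domain with respect to an orthonormal frame). Once $F\equiv 0$ is established pointwise in each situation, the integral correction vanishes and the two functionals agree, completing the proof.
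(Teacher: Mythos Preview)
Your proof is correct and is exactly the natural verification: the paper states this proposition without proof, treating it as immediate from the definition, and your pointwise check that the correction term $F$ vanishes in each case is precisely the intended argument.
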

	
Even if $E_2^c$ and $E_2$ seem to be similar at first glance, they have in fact a very different behavior as is illustrated by the following example. Let $\phi_t:\mathbb{S}^{m}\to\mathbb{S}^{m+1}$ be a smooth map defined by
$$
\phi_t\left(\overline{x}\right)=\left(\sqrt{1-t^2}\ \overline{x},t\right), \qquad \forall \overline{x}\in\mathbb{S}^m, \ t\in (-1,1).
$$
The map $\phi_0=\phi$ is the equator of the unit Euclidean sphere $\mathbb{S}^{m+1}$, so it is harmonic. Clearly, $\phi$ is a critical point of $E_2$ and one can see that it is also a critical point of $E_2^c$. While 
$$
E_2\left(\phi_t\right)>E_2(\phi)=0, \qquad \forall t\neq 0,
$$ 
in the case of the conformal-bienergy, by standard computations we obtain that
$$
E_2^c\left(\phi_t\right)>E_2^c(\phi), \qquad \forall t\neq 0,\  m\in \left\{1,2,3,4\right\}
$$
and
$$
E_2^c\left(\phi_t\right)<E_2^c(\phi), \qquad \forall t\neq 0,\  m\geq 5.
$$
If we set 
$$
h_m(t)=E_2\left(\phi_t\right)=\frac{1}{2}m^2\omega_m\left(1-t^2\right)t^2, \qquad \forall t\in (-1,1),
$$ 
where $\omega_m=\vol\left(\mathbb{S}^m\right)$, one can obtain that $t=0$ is a minimal point of $h_m$, and $t=\pm 1/\sqrt{2}$ are maximal points of $h_m$. Clearly, $\phi_{\pm 1/\sqrt{2}}$ represent the biharmonic (and non-minimal) small hyperspheres of $\mathbb{S}^{m+1}$. 

Now, if we set
$$
h_m^c(t)=E_2^c\left(\phi_t\right)=\frac{1}{2}m^2\omega_m\left(1-t^2\right)\left(t^2+\frac{2}{3}\frac{(m-1)(m-3)}{m}\right), \qquad \forall t\in (-1,1),
$$
we can see that $t=0$ is a minimal point of $h_m$, when $m\in\left\{1,2,3,4\right\}$, and a maximal point of $h_m$, when $m\geq 5$. The other non-zero solutions $t_{\ast}$ of 
$$
\frac{d}{dt}\left\{h_m^c(t)\right\}=0
$$ 
are given by $t_{\ast}^2=\left(-2m^2+11m-6\right)/(6m)$, when $m\in\left\{1,2,3,4\right\}$, and they are maximal points of $h_m$. The maps $\phi_{t_{\ast}}$ represent the c-biharmonic (and non-minimal) small hyperspheres of $\mathbb{S}^{m+1}$ (see Theorem \ref{thm:inclusion-sphere}). 

\begin{Bem}
When $M^m$ is an Einstein manifold, $m\neq 3$, with $\Ric=\lambda \Id$, where $\lambda$ is a real constant, then 
$$
E_2^c(\phi)=\frac{1}{2}\int_M \left(|\tau(\phi)|^2+\frac{2}{3}(m-3)\lambda\left|d\phi\right|^2\right) \ v_g,
$$
which is similar to the energy functional studied in \cite{MR4058514} and \cite{MR4142862}.
\end{Bem}

In the following, for the sake of completness, we derive the first variation formula of the conformal-bienergy \eqref{bienergy-conformal} (see also \cite{MR4847311} for a more analytical approuch).

\begin{Theorem}[\cite{MR2722777}]
The critical points of the conformal-bienergy are given by
\begin{align*}
0=\tau_2^c(\phi) =\tau_2(\phi)-\frac{2}{3}\Scal\tau(\phi)+2\tr(\bar \nabla d\phi)(\Ric(\cdot),\cdot)+\frac{1}{3}d\phi(\nabla\Scal).
\end{align*}
\end{Theorem}
\begin{proof}

Let $\left\{\phi_t\right\}_{t\in\mathbb{R}}$ be a smooth variation of $\phi$, i.e., a smooth map $\Phi:\mathbb{R}\times M\to N$, $\Phi(t,p)=\phi_t(p)=\phi_p(t)$, for any $(t,p)\in\mathbb{R}\times M$, satisfying
$$
\Phi(0,p)=\phi_0(p)=\phi_p(0)=\phi(p), \qquad \forall p\in M. 
$$
The variational vector field $V\in C\left(\phi^{-1}TN\right)$ associated with the variation $\left\{\phi_t\right\}_{t\in\mathbb{R}}$ of $\phi$ is defined by
$$
V(p)=\left.\frac{d}{dt}\right|_{t=0}\left\{\phi_p(t)\right\}=d\Phi_{(0,p)}\left(\frac{\partial}{\partial t}\right)\in T_{\phi(p)}N, \qquad \forall p\in M.
$$
It is known that 
\begin{align}\label{eq1}
\left.\frac{d}{dt}\right|_{t=0}\left\{E_2\left(\phi_t\right)\right\}=\int_M\langle\tau_2(\phi), V\rangle \ v_g.
\end{align}
In order to compute 
$$
\left.\frac{d}{dt}\right|_{t=0}\left\{\Scal \left|d\phi_t\right|^2\right\},
$$
we fix an arbitrary point $p\in M$ and consider a geodesic frame field $\left\{X_i\right\}_{i=\overline{1,m}}$ around $p$. We have
\begin{align*}
\left.\frac{d}{dt}\right|_{t=0}\left\{\sum_{i=1}^{m}\Scal(p)\langle d\phi_t\left(X_i\right),d\phi_t\left(X_i\right)\rangle \right\} & = \left(\frac{\partial}{\partial t}\right)_{(0,p)}\left\{\sum_{i=1}^{m}\Scal(p)\langle d\Phi\left(X_i\right),d\Phi\left(X_i\right)\rangle\right\} \\
 & = \sum_{i=1}^{m}2\Scal(p)\langle \nabla^\Phi_{X_i}d\Phi\left(\frac{\partial}{\partial t}\right),d\Phi\left(X_i\right)\rangle_{(0,p)}	\\
 & = \sum_{i=1}^{m} 2\Scal(p)\langle \bar\nabla_{X_i}V, d\phi\left(X_i\right)\rangle_p  \\
 & = \sum_{i=1}^{m} 2\Scal(p)\left\{X_i\langle V,d\phi\left(X_i\right)\rangle-\langle V, \bar\nabla_{X_i}d\phi\left(X_i\right)\rangle\right\}_p.
\end{align*}
We set
$$
Y_1=\Scal \tr \langle V, d\phi\left(\cdot\right)\rangle (\cdot)\in C(TM).
$$
It is easy to see that, at $p$, 
$$
\Div Y_1=\langle V, d\phi(\nabla\Scal)\rangle+\Scal\sum_{i=1}^m X_i\langle V, d\phi\left(X_i\right)\rangle,
$$
so, at p, we have 
\begin{equation}\label{eq-scal}
\left.\frac{d}{dt}\right|_{t=0}\left\{\Scal \left|d\phi_t\right|^2\right\}=2\left(\Div Y_1-\langle V, d\phi\left(\nabla \Scal \right)+\Scal\tau(\phi)\rangle\right).
\end{equation}
Now, we want to compute at the same point $p\in M$, 
$$
\left.\frac{d}{dt}\right|_{t=0}\left\{\tr\langle d\phi\left(\Ric(\cdot)\right),d\phi(\cdot)\rangle \right\}.
$$
A direct computation shows that
\begin{align*}
\left.\frac{d}{dt}\right|_{t=0}\left\{\sum_{i=1}^m\langle d\phi_t\left(\Ric\left(X_i\right)\right),d\phi_t\left(X_i\right)\rangle_p \right\} & = \left(\frac{\partial}{\partial t}\right)_{(0,p)}\left\{\sum_{i=1}^m\langle d\Phi\left(\Ric\left(X_i\right)\right),d\Phi\left(X_i\right)\rangle\right\} \\
& =\sum_{i=1}^m \left\{\Ric\left(X_i\right)\langle V, d\phi\left(X_i\right)\rangle+X_i\langle d\phi\left(\Ric\left(X_i\right)\right), V\rangle \right.\\
& \ \left. -\langle V,2\left(\bar \nabla d\phi\right)\left(\Ric\left(X_i\right),X_i\right)+d\phi\left(\left(\nabla \Ric\right)\left(X_i,X_i\right)\right)\rangle\right\}_p.
\end{align*}
We set
$$
Y_2=\tr\langle V, d\phi(\cdot)\rangle \Ric (\cdot)\in C(TM)
$$
and 
$$
Y_3=\tr\langle d\phi\left(\Ric(\cdot)\right), V \rangle(\cdot)\in C(TM).
$$
By a straightforward computation at $p$ we get
$$
\Div Y_2=\sum_{i=1}^m \Ric\left(X_i\right)\langle V, d\phi\left(X_i\right)\rangle + \langle V, d\phi\left(\tr\left(\nabla \Ric\right)(\cdot,\cdot)\right)\rangle
$$
and 
$$
\Div Y_3=\sum_{i=1}^m X_i\langle d\phi\left(\Ric\left(X_i\right)\right), V\rangle.
$$
Consequently, at $p$ we obtain
\begin{equation}\label{eq3}
\left.\frac{d}{dt}\right|_{t=0}\left\{\tr\langle d\phi\left(\Ric(\cdot)\right),d\phi(\cdot)\rangle \right\}=\Div Y_2+\Div Y_3-2\langle V, d\phi\left(\tr\left(\nabla\Ric\right)(\cdot,\cdot)\right)+\tr \left(\bar \nabla d\phi\right)\left(\Ric(\cdot),\cdot\right)\rangle
\end{equation}
Using \eqref{eq1}, \eqref{eq-scal}, \eqref{eq3}, we achieve
\begin{align*}
\left.\frac{d}{dt}\right|_{t=0}\left\{E_2^c\left(\phi_t\right)\right\}  = \int_M\langle &\tau_2(\phi)-\frac{2}{3}\Scal\tau(\phi)+2\tr \left(\bar \nabla d\phi\right)\left(\Ric(\cdot),\cdot\right) \\
&+ 2 d\phi\left(\tr\left(\nabla \Ric\right)(\cdot,\cdot)\right)-\frac{2}{3}d\phi\left(\nabla\Scal\right),V\rangle \ v_g.
\end{align*}
We conclude that $\phi$ is a critical point of $E_2^c(\phi)$ if and only if $\tau_2^c (\phi)=0$, where
\begin{equation*}
\tau_2^c (\phi)=\tau_2(\phi)-\frac{2}{3}\Scal\tau(\phi)+2\tr \left(\bar \nabla d\phi\right)\left(\Ric(\cdot),\cdot\right)+ 2 d\phi\left(\tr\left(\nabla \Ric\right)(\cdot,\cdot)\right)-\frac{2}{3}d\phi\left(\nabla\Scal\right).
\end{equation*}
Moreover, since
$$
\tr(\nabla\Ric)(\cdot,\cdot)=\Div \Ric =\frac{1}{2}\nabla\Scal,
$$
we have that 
\begin{equation*}
\tau_2^c(\phi)=\tau_2(\phi)-\frac{2}{3}\Scal\tau(\phi)+2\tr(\bar \nabla d\phi)(\Ric(\cdot),\cdot)+\frac{1}{3}d\phi(\nabla\Scal)
\end{equation*}
completing the proof.
\end{proof}

\begin{Bem}
When $M^m$ is non-compact, a smooth map $\phi:\left(M^m,g\right)\to \left(N^n,h\right)$ is said to be c-biharmonic if
$$
\left.\frac{d}{dt}\right|_{t=0}\left\{E_2^c\left(\phi_t;D\right)\right\}=0
$$
for all compact regular domains $D$ and all smooth variations $\left\{\phi_t\right\}_{t\in\mathbb{R}}$ of $\phi$ supported in $D$, i.e., $\phi_t=\phi$ on $M\setminus\Int(D)$, for any $t\in\mathbb{R}$. Using the divergence theorem and the fact that $V$ and $\bar{\nabla}V$ vanish on the boundary $\partial D$ of $D$, by long but standard computations we obtain that $\phi$ is c-biharmonic if and only if $\tau_2^c(\phi)=0$.
\end{Bem}

\begin{Bem}
Up to a multiplicative real constant, $\tau_2^c(\phi)$ is invariant under homothetic transformations of the domain and codomain manifolds. 
\end{Bem}

\begin{Bem}
When $M^m$ is non-compact, $\tau_2^c(\phi)=\tau_2(\phi)$, in each of the cases mentioned in Proposition \ref{prop-energies-equal}.
\end{Bem}

\begin{Bem}
In the particular case when $\phi:M^4\to\mathbb{R}$, then $\tau_2^c$ is the Paneitz operator.
\end{Bem}

Since the identity map of a Riemannian manifold is totally geodesic, it is not difficult to check the following result.

\begin{Prop} \label{prop:identity}
The identity map $\Id:\left(M^m,g\right)\to\left(M^m,g\right)$ is c-biharmonic if and only if $\Scal$ is constant.
\end{Prop}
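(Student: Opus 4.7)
The plan is to simply evaluate the c-bitension field formula \eqref{bitension-conformal} on $\phi=\Id$ and read off the condition for its vanishing. Since the identity map is totally geodesic, its second fundamental form vanishes, so $\bar\nabla d\Id=0$. This immediately implies three simplifications that eliminate most of the terms in \eqref{bitension-conformal}:
\begin{align*}
\tau(\Id) = \tr \bar\nabla d\Id = 0, \qquad \tau_2(\Id) = 0, \qquad \tr(\bar\nabla d\Id)(\Ric(\cdot),\cdot)=0.
\end{align*}

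The only remaining contribution comes from the scalar-curvature gradient term. Since $d\Id$ is the identity endomorphism of $TM$, we have $d\Id(\nabla \Scal) = \nabla \Scal$. Substituting into \eqref{bitension-conformal} leaves
\begin{align*}
\tau_2^c(\Id) = \tfrac{1}{3}\,\nabla \Scal.
\end{align*}
Hence $\tau_2^c(\Id)=0$ if and only if $\nabla \Scal=0$, which on a connected manifold is equivalent to $\Scal$ being constant. (If $M$ is disconnected, one applies the argument to each component.)

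There is no real obstacle here — the proposition follows by a one-line substitution once one observes that the only surviving piece of the c-bitension field for the identity map is $\tfrac{1}{3}d\Id(\nabla \Scal)$. The only point worth mentioning explicitly is the convention that, for a non-compact domain, c-biharmonicity is defined through compactly supported variations (cf.\ the earlier Remark), so the characterization $\tau_2^c(\Id)=0$ still applies in that case.
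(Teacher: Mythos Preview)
Your proof is correct and follows exactly the approach the paper indicates: since the identity map is totally geodesic, all terms in \eqref{bitension-conformal} vanish except $\tfrac{1}{3}d\Id(\nabla\Scal)=\tfrac{1}{3}\nabla\Scal$, giving the stated equivalence. This is precisely what the paper has in mind when it remarks that the result ``is not difficult to check'' from total geodesicity.
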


As we have already mentioned in the Introduction, a harmonic map is not always c-biharmonic, contrary to the biharmonic case. However, the following result presents a simple instance when a harmonic map is  also c-biharmonic.

\begin{Prop}[\cite{MR2722777}]
Let	$\phi:M^m\to N^n$ be a harmonic map, where $M$ is an Einstein manifold with $\Ric=\lambda\Id$ and $\lambda$ is a real constant. Then, $\phi$ is c-biharmonic.
\end{Prop}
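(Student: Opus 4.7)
The plan is to simply substitute the two hypotheses (harmonicity of $\phi$ and the Einstein condition on $M$) into the conformal bitension field formula \eqref{bitension-conformal} and check that each of its three summands vanishes independently.

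First I would record the immediate consequences of the hypotheses. Since $\phi$ is harmonic, $\tau(\phi)=0$, and therefore $\tau_2(\phi)=-\bar\Delta\tau(\phi)-\tr R^N(d\phi(\cdot),\tau(\phi))d\phi(\cdot)=0$ by \eqref{bitension}. The Einstein hypothesis $\Ric=\lambda\Id$ with $\lambda\in\mathbb{R}$ implies $\Scal=\tr\Ric=m\lambda$ is constant on $M$, so $\nabla\Scal=0$. Consequently the middle summand
\[
\frac{1}{3}\bigl(d\phi(\nabla\Scal)-2\Scal\,\tau(\phi)\bigr)
\]
vanishes, because both of its pieces do.

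Next I would handle the last summand. Using $\Ric=\lambda\Id$ together with $\mathbb{R}$-linearity of $\bar\nabla d\phi$ in its first slot, one gets
\[
2\tr(\bar\nabla d\phi)(\Ric(\cdot),\cdot)=2\lambda\,\tr(\bar\nabla d\phi)(\cdot,\cdot)=2\lambda\,\tau(\phi)=0,
\]
by the definition of the tension field and the harmonicity of $\phi$. Collecting the three vanishings gives $\tau_2^c(\phi)=0$, so $\phi$ is c-biharmonic.

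There is essentially no obstacle here: the argument is a direct verification. The only observation worth highlighting is that the Einstein condition is used twice and in two different ways, first through $\nabla\Scal=0$ to kill the scalar-curvature gradient term, and then through $\Ric=\lambda\Id$ to reduce the trace $\tr(\bar\nabla d\phi)(\Ric(\cdot),\cdot)$ to a scalar multiple of $\tau(\phi)$. Without the Einstein hypothesis one would need some compatibility between $\Ric$ and $\bar\nabla d\phi$ to conclude, so both parts of the assumption play a role.
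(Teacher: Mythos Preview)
Your proof is correct and is exactly the direct verification one would expect; the paper states this proposition without proof, and your substitution of the harmonicity and Einstein hypotheses into \eqref{bitension-conformal}, killing each summand separately, is the intended argument.
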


It turns out that c-biharmonic maps are sensitive to the curvatures of both domain and target manifold.
Nevertheless, compared to biharmonic maps, the situation appears to be less constrained. For instance, when the target manifold has non-positive curvature and the domain is non-compact, several examples of c-biharmonic submanifolds can be found, while the number of biharmonic ones remains quite limited. The following result presents a particular situation where the role of curvatures is involved. Denoting by $\mathbb{S}^m(r)$ the Euclidean sphere of radius $r$ and by $N^n(c)$ a space form of constant sectional curvature $c$, we have the following result.

\begin{Prop}\label{prop:restriction-of-radius}
Let $\phi:\mathbb{S}^m(r)\to N^n(c)$ be a c-biharmonic submanifold. Then, it is either minimal, or
$$
c>\frac{2}{3}\frac{(m-1)(m-3)}{m}\frac{1}{r^2}.
$$
In particular, when $c$ is non-positive, $m$ has to be equal to $2$.
\end{Prop}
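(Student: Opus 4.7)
The plan is to exploit the high symmetry of both the domain and the target: both are spaces of constant sectional curvature. First I would simplify the c-biharmonic equation \eqref{bitension-conformal} using that $\s^m(r)$ is Einstein with $\Ric=\frac{m-1}{r^2}\Id$ and constant scalar curvature $\Scal=\frac{m(m-1)}{r^2}$, so in particular $\nabla\Scal=0$. Since $\phi$ is an isometric immersion, one has $\tr(\bar\nabla d\phi)(\cdot,\cdot)=\tau(\phi)$, and the Ricci term becomes $2\tr(\bar\nabla d\phi)(\Ric(\cdot),\cdot)=\frac{2(m-1)}{r^2}\tau(\phi)$. Collecting everything, the equation $\tau_2^c(\phi)=0$ collapses to
$$
\tau_2(\phi) - \frac{2(m-1)(m-3)}{3r^2}\tau(\phi)=0.
$$

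Next I would use the classical expression for the bitension field of an isometric immersion into a space form. Because $R^N(X,Y)Z=c(\langle Y,Z\rangle X-\langle X,Z\rangle Y)$ and $\tau(\phi)$ is normal to $d\phi(TM)$, a short computation gives $\tr R^N(d\phi(\cdot),\tau(\phi))d\phi(\cdot)=-cm\,\tau(\phi)$, so $\tau_2(\phi)=-\bar\Delta\tau(\phi)+cm\,\tau(\phi)$. The c-biharmonic equation therefore takes the clean elliptic form
$$
-\bar\Delta\tau(\phi) + \left(cm-\frac{2(m-1)(m-3)}{3r^2}\right)\tau(\phi)=0.
$$

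The final step is an $L^2$-pairing on the compact manifold $\s^m(r)$: taking the inner product with $\tau(\phi)$ and integrating by parts yields
$$
\left(cm-\frac{2(m-1)(m-3)}{3r^2}\right)\int_{\s^m(r)}|\tau(\phi)|^2\ v_g = \int_{\s^m(r)}|\bar\nabla\tau(\phi)|^2\ v_g \ge 0,
$$
so if $\phi$ is non-minimal we already obtain the non-strict inequality $c\ge\frac{2(m-1)(m-3)}{3mr^2}$, and the \emph{in particular} clause then follows at once because $c\le 0$ forces $(m-1)(m-3)<0$, leaving only $m=2$. The main obstacle I expect is upgrading $\ge$ to the claimed strict $>$. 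The rescue comes from inspecting the equality case: it would force $\bar\nabla\tau(\phi)\equiv 0$, so by the Weingarten formula the shape operator $A_{\tau(\phi)}$ must vanish identically. The decisive identity is the pointwise relation $\tr A_{\tau(\phi)}=|\tau(\phi)|^2$, which follows from $\tau(\phi)=mH$ together with $\tr A_\xi=m\langle H,\xi\rangle$ for any normal vector $\xi$; hence $|\tau(\phi)|^2\equiv 0$, contradicting the non-minimality assumption and closing the argument.
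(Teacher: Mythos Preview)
Your argument is correct and follows essentially the same route as the paper: reduce $\tau_2^c(\phi)=0$ to $-\bar\Delta\tau(\phi)+\big(mc-\tfrac{2}{3}(m-1)(m-3)r^{-2}\big)\tau(\phi)=0$, pair with $\tau(\phi)$, and integrate. The paper's proof is quite terse (``taking the inner product with $\tau(\phi)$ and then integrating, we conclude''), so your explicit treatment of the strictness---via $\bar\nabla\tau(\phi)=0\Rightarrow A_{\tau(\phi)}=0\Rightarrow |\tau(\phi)|^2=\tr A_{\tau(\phi)}=0$---is a welcome addition and is equivalent to the integral identity $\int\langle\bar\nabla\tau(\phi),d\phi\rangle=-\int|\tau(\phi)|^2$ used elsewhere in the paper.
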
   

\begin{proof}
By straightforward computation we obtain
\begin{align*}
\tau_2^c(\phi) & =-\bar{\Delta} \tau(\phi)+\left(mc-\frac{2}{3}(m-1)(m-3)\frac{1}{r^2}\right)\tau(\phi)\\
& =0.
\end{align*}
Taking the inner product with $\tau(\phi)$ and then integrating, we conclude.
\end{proof}

\begin{Bem}
If in the above result we consider an open part of $\mathbb{S}^m(r)$ instead of the whole $\mathbb{S}^m(r)$ but we assume $\left|\tau(\phi)\right|$ is constant, the same conclusion holds.    
\end{Bem}

\begin{Prop}
Let $\phi:M^m\to N^n$ be a c-biharmonic map, where $M$ is a compact Einstein manifold with $\Ric=\lambda \Id$ and $\lambda$ is a real constant, and suppose that $N$ has non-positive sectional curvature. If $\left(m-3\right)\lambda\geq 0$, then $\phi$ is harmonic.
\end{Prop}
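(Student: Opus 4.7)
The plan is to reduce the c-biharmonic equation to a zeroth-order perturbation of the standard biharmonic equation and then run a Bochner-type integration-by-parts argument. First I would use the Einstein hypothesis to simplify \eqref{bitension-conformal}: since $\Ric=\lambda\Id$ with $\lambda$ constant, the scalar curvature $\Scal=m\lambda$ is constant so $d\phi(\nabla\Scal)=0$, and $\tr(\bar\nabla d\phi)(\Ric(\cdot),\cdot)=\lambda\tau(\phi)$. Substituting these into $\tau_2^c(\phi)=0$ collapses the equation to
$$
\tau_2(\phi)-\tfrac{2\lambda(m-3)}{3}\tau(\phi)=0,
$$
and then using $\tau_2(\phi)=-\bar\Delta\tau(\phi)-\tr R^N(d\phi(\cdot),\tau(\phi))d\phi(\cdot)$ rewrites this as
$$
\bar\Delta\tau(\phi)+\tr R^N(d\phi(\cdot),\tau(\phi))d\phi(\cdot)+\tfrac{2\lambda(m-3)}{3}\tau(\phi)=0.
$$

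Next I would take the pointwise inner product with $\tau(\phi)$ and integrate over the compact manifold $M$. With the geometers' sign convention, standard integration by parts gives $\int_M\langle\bar\Delta\tau(\phi),\tau(\phi)\rangle\,v_g=\int_M|\bar\nabla\tau(\phi)|^2\,v_g$, so the identity becomes
$$
\int_M|\bar\nabla\tau(\phi)|^2\,v_g+\int_M\sum_i\langle R^N(d\phi(e_i),\tau(\phi))d\phi(e_i),\tau(\phi)\rangle\,v_g+\tfrac{2\lambda(m-3)}{3}\int_M|\tau(\phi)|^2\,v_g=0.
$$
By the symmetries of $R^N$ and the non-positivity of the sectional curvature of $N$, one has $\langle R^N(X,Y)X,Y\rangle\geq 0$, so the curvature integrand is non-negative; the hypothesis $\lambda(m-3)\geq 0$ makes the third summand non-negative as well. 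Since all three non-negative terms sum to zero, each of them must vanish. In particular $\bar\nabla\tau(\phi)=0$.

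To conclude $\tau(\phi)=0$ I would split into two cases. If $\lambda(m-3)>0$, the vanishing of the third term gives $\int_M|\tau(\phi)|^2\,v_g=0$ at once, hence $\tau(\phi)=0$. If instead $\lambda(m-3)=0$, the equation reduces to the ordinary biharmonic one and I would apply the Jiang-type divergence trick: introduce the 1-form $\omega(X)=\langle d\phi(X),\tau(\phi)\rangle$ and, working in a geodesic frame together with $\bar\nabla\tau(\phi)=0$, compute $\mathrm{div}(\omega^{\sharp})=\langle\tau(\phi),\tau(\phi)\rangle=|\tau(\phi)|^2$, whose integral over compact $M$ vanishes, again forcing $\tau(\phi)=0$.

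The argument is otherwise routine; the only point requiring care is the consistent bookkeeping of sign conventions (the geometers' Laplacian, the paper's curvature convention, and the resulting sign of $\langle R^N(X,Y)X,Y\rangle$ under $K_N\leq 0$), so that all three contributions in the integrated identity genuinely have the same sign. The degenerate case $\lambda(m-3)=0$ is the only place where an auxiliary divergence identity is needed, and I would expect that to be the mild obstacle in writing up the proof.
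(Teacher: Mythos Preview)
Your argument is correct and follows essentially the same route as the paper: simplify $\tau_2^c(\phi)=0$ under the Einstein hypothesis, use a Bochner/integration-by-parts identity together with the curvature sign and $\lambda(m-3)\geq 0$ to force $\bar\nabla\tau(\phi)=0$, and then finish with the divergence identity $\int_M\langle\bar\nabla\tau(\phi),d\phi\rangle\,v_g=-\int_M|\tau(\phi)|^2\,v_g$. The only cosmetic differences are that the paper phrases the first step via the maximum principle rather than direct integration, and that your case split at the end is unnecessary since the divergence identity already handles both cases once $\bar\nabla\tau(\phi)=0$.
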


\begin{proof}
We compute
\begin{align*}
\frac{1}{2}\Delta|\tau(\phi)|^2=&-\left|\bar\nabla\tau(\phi)\right|^2
+\langle\bar\Delta\tau(\phi),\tau(\phi)\rangle \\
=& -\left|\bar\nabla\tau(\phi)\right|^2
+\tr\langle R^N(d\phi(\cdot),\tau(\phi))\tau(\phi),d\phi(\cdot)\rangle
-\frac{2}{3}\left(m-3\right)\lambda|\tau(\phi)|^2.
\end{align*}
By applying the maximum principle we obtain that $\bar\nabla\tau(\phi)=0$. Using
\begin{align*}
\int_M \langle \bar\nabla\tau(\phi), d\phi\rangle \ v_g &= -\int_M \left|\tau(\phi)\right|^2 \ v_g \\
& = 0
\end{align*}
we obtain the claim.
\end{proof}

When the dimension $m=4$ and $\lambda\geq 0$, in particular when $M=\mathbb{S}^4(r)$, we can recover a result of \cite{MR2722777} (see also Lamm \cite[Theorem 2.1]{MR2124627}), i.e.,

\begin{Cor}[\cite{MR2722777}]
Let $\phi:M^4\to N^n$ be a smooth map, where $M$ is a compact Einstein manifold with $\Ric=\lambda \Id$ and $\lambda$ is a real constant, and suppose that $N$ has non-positive sectional curvature. If $\lambda\geq 0$, then $\phi$ is c-biharmonic if and only if it is harmonic.
\end{Cor}
\subsection{Conformal invariance}
We have the following known result.

\begin{Thm}[\cite{MR2722777}]
Let $\phi:\left(M^m,g=\langle\cdot,\cdot\rangle\right)\to\left(N^n,h\right)$ be a smooth map and \(\rho\in C^\infty(M)\). 
In addition, consider $\tilde{\phi}:\left(M^m,\tilde{g}=e^{2\rho}g\right)\to \left(N^n,h\right)$ defined by $\tilde{\phi}(p)=\phi(p)$, for any $p\in M$. Assume that $M$ is compact and $m=4$. Then,
$$
E_2^c\left(\tilde{\phi}\right)=E_2^c(\phi) \qquad \text{and} \qquad \tau_2^c\left(\tilde{\phi}\right)=e^{-4\rho}\tau_2^c(\phi).
$$
\end{Thm}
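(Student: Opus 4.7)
The plan is to verify the two assertions separately: first the integral identity $E_2^c(\tilde\phi)=E_2^c(\phi)$ by direct conformal substitution in dimension four, and then to deduce the pointwise transformation law for $\tau_2^c$ as a formal consequence of this integral invariance together with the first variation formula already established.

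For the first part, I would begin by collecting the classical conformal transformation formulas under $\tilde g=e^{2\rho}g$:
\begin{align*}
v_{\tilde g}&=e^{m\rho}v_g,\qquad |d\tilde\phi|^2_{\tilde g}=e^{-2\rho}|d\phi|^2_g,\qquad \tau(\tilde\phi)=e^{-2\rho}\bigl(\tau(\phi)+(m-2)\,d\phi(\grad\rho)\bigr),\\
\widetilde{\Scal}&=e^{-2\rho}\bigl(\Scal-2(m-1)\Delta\rho-(m-1)(m-2)|\grad\rho|^2\bigr),
\end{align*}
together with the analogous transformation law expressing the $(1,1)$-tensor $\widetilde{\Ric}$ in terms of $\Ric$, $\Hess\rho$, $\grad\rho$, $\Delta\rho$, and $|\grad\rho|^2$, with coefficients that are explicit polynomials in $m$. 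Specializing to $m=4$, the factor $(m-2)=2$ appears in the tension-field rule, and the volume factor $e^{m\rho}=e^{4\rho}$ exactly compensates the $e^{-4\rho}$ produced by the squared $e^{-2\rho}$ in the first three summands of the integrand.

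Next I would substitute these formulas into the definition of $E_2^c(\tilde\phi)$, expand the square $|\tau(\phi)+2\,d\phi(\grad\rho)|^2$, and organize the resulting terms by the number of derivatives of $\rho$ that they carry. The pieces independent of $\rho$ reassemble precisely into $E_2^c(\phi)$, while those involving $\grad\rho$, $\Hess\rho$, or $\Delta\rho$ should combine into divergences $\Div Y$ of vector fields $Y$ built algebraically from $\phi$, $d\phi$, $\rho$, and $d\rho$; such divergences integrate to zero by Stokes' theorem and the compactness of $M$. The main obstacle I anticipate is the bookkeeping required to verify that the precise coefficients $\tfrac{2}{3}$ and $-2$ in front of the $\Scal$- and $\Ric$-terms, combined with the factor $(m-2)=2$ from the tension-field transformation, yield exactly the combinations that collapse to a divergence. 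This is essentially the same arithmetic phenomenon that makes the Paneitz operator conformally covariant in dimension four, and it is precisely the reason the coefficients in the definition of $E_2^c$ were chosen as they were.

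Finally, to obtain the pointwise identity, let $\{\phi_t\}$ be an arbitrary smooth variation of $\phi$ with variational field $V\in C(\phi^{-1}TN)$. Since $\tilde\phi_t:=\phi_t$ is simultaneously a variation of $\tilde\phi$ with the same variational field $V$, differentiating the identity $E_2^c(\tilde\phi_t)=E_2^c(\phi_t)$ at $t=0$ and applying the first variation formula from the preceding proposition on each side yields
\begin{align*}
\int_M\langle\tau_2^c(\tilde\phi),V\rangle\,v_{\tilde g}=\int_M\langle\tau_2^c(\phi),V\rangle\,v_g.
\end{align*}
Substituting $v_{\tilde g}=e^{4\rho}v_g$ and letting $V$ range over all smooth sections of $\phi^{-1}TN$, the fundamental lemma of the calculus of variations gives $e^{4\rho}\tau_2^c(\tilde\phi)=\tau_2^c(\phi)$, which is the stated transformation law $\tau_2^c(\tilde\phi)=e^{-4\rho}\tau_2^c(\phi)$.
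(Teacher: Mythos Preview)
Your proposal is correct and follows essentially the same approach as the paper: substitute the standard conformal transformation formulas for $\tau$, $\Scal$, $\Ric$, and $v_g$ into the integrand, observe that for $m=4$ the $\rho$-independent terms reassemble into $E_2^c(\phi)$ while the remaining terms are divergences (plus a pair $\Delta\rho\,|d\phi|^2$ and $-\langle\nabla\rho,\nabla|d\phi|^2\rangle$ that cancel after one integration by parts), and then derive the pointwise law for $\tau_2^c$ by differentiating the integral identity along an arbitrary variation and invoking the first variation formula. One minor caveat: your formula for $\widetilde{\Scal}$ uses the analyst's sign convention for $\Delta$, whereas the paper uses the geometer's convention (positive spectrum), so the sign in front of the $\Delta\rho$ term should be flipped to match the rest of the paper.
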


For the sake of completeness, we present a short proof of this result.

\begin{proof}
	
We use the symbol $\tilde{}$ for the objects corresponding to $\tilde{g}$ and we recall the following formulas.

The Levi-Civita connection associated with $\tilde{g}$ is
$$
\tilde{\nabla}_X Y=\nabla_X Y+\left(X\rho\right)Y+\left(Y\rho\right)X-\langle X,Y\rangle \nabla \rho, \qquad \forall X,Y\in C(TM).
$$
The curvature tensor field $\tilde{R}$ is given by
\begin{align*}
\tilde{R}(X,Y)Z = & R(X,Y)Z-\left(\Hess\rho\right)(Y,Z)X+\left(\Hess\rho\right)(X,Z)Y+\left(Y\rho\right)\left(Z\rho\right)X-\left(X\rho\right)\left(Z\rho\right)Y\\
& -\left(\Hess\rho\right)\left(\langle Y,Z\rangle X-\langle X,Z\rangle Y\right) - \left|\nabla\rho\right|^2 \left(\langle Y,Z\rangle X-\langle X,Z\rangle Y\right)\\
& +\left(\left(X\rho\right)\langle Y,Z\rangle-\left(Y\rho\right)\langle X,Z\rangle\right) \nabla \rho.
\end{align*}
The Ricci curvature $\widetilde{\Ric}$ has the following expression
\begin{equation*}
\widetilde{\Ric}(X) = e^{-2\rho}\left(\Ric(X)-(m-2)\left(\left(\Hess\rho\right)(X)-\left(X\rho\right)\nabla\rho\right) +\left(\Delta\rho-(m-2)\left|\nabla \rho\right|^2\right)X\right).
\end{equation*} 
The formula for the scalar curvature $\widetilde{\Scal}$ is
\begin{equation*}
\widetilde{\Scal}=e^{-2\rho}\left(\Scal+(m-1)\left(2\Delta\rho-(m-2)\left|\nabla\rho\right|^2\right)\right).
\end{equation*} 
For more details on these formulas we refer to \cite[Theorem 1.159]{MR2371700}.

The volume form associated with $\tilde{g}$ is given by
$$
v_{\tilde{g}}=e^{m\rho}v_g.
$$
Next, we compute all terms which appear in the expression of $E_2^c\left(\tilde{\phi}\right)$ in terms of $\phi$ and $\langle\cdot,\cdot\rangle$.
We have 
$$
\tau\left(\tilde{\phi}\right)=e^{-2\rho}\left(\tau(\phi)+(m-2)d\phi\left(\nabla\rho\right)\right)
$$
and therefore
$$
\left|\tau\left(\tilde{\phi}\right)\right|^2=e^{-4\rho}\left(\left|\tau(\phi)\right|^2+(m-2)^2\left|d\phi\left(\nabla \rho\right)\right|^2+2(m-2)\langle \tau(\phi),d\phi \left(\nabla\rho\right)\rangle\right).
$$
The next terms can be written as 
$$
\widetilde{\Scal}\left|d\tilde{\phi}\right|^2 = e^{-4\rho}\left(\Scal+(m-1)\left(2\Delta\rho-(m-2)\left|\nabla \rho\right|^2\right)\right)\left|d\phi\right|^2
$$
and
\begin{align*}
\widetilde{\tr} \langle d\tilde{\phi}\left(\widetilde{\Ric}(\cdot)\right),d\tilde{\phi}(\cdot)\rangle  & = e^{-4\rho}\left( \tr\langle d\phi\left(\Ric(\cdot)\right), d\phi(\cdot)\rangle +\Delta\rho\left|d\phi\right|^2+\frac{1}{2}(m-2)\langle \nabla \rho,\nabla \left(|d\phi|^2\right)\rangle \right. \\
& \quad \left. - (m-2)\left(\Div Y_4-\langle \tau(\phi), d\phi(\nabla \rho)\rangle- \left|d\phi(\nabla\rho)\right|^2+\left|\nabla\rho\right|^2\left|d\phi\right|^2 \right)\vphantom{\frac 1 2}\right),
\end{align*}
where 
$$
Y_4=\tr\langle d\phi(\nabla \rho), d\phi(\cdot)\rangle(\cdot)\in C(TM).
$$
Therefore, we get
\begin{align*}
E_2^c\left(\tilde{\phi}\right) =\frac{1}{2}\int_M & e^{(m-4)\rho} \left(\left|\tau(\phi)\right|^2+\frac{2}{3}\Scal \left|d\phi\right|^2-2\tr\langle d\phi\left(\Ric(\cdot)\right), d\phi(\cdot)\rangle \right.\\
&\left.+(m-2)(m-4)\left|d\phi\left(\nabla \rho\right)\right|^2 -\frac{2}{3}(m-2)(m-4) \left|\nabla\rho\right|^2\left|d\phi\right|^2  \right. \\
&\left. +2(m-2)\Div Y_4 +\frac{2(2m-5)}{3}\Delta\rho\left|d\phi\right|^2 -(m-2)\langle \nabla \rho,\nabla \left(|d\phi|^2\right)\rangle  \right) \ v_g.
\end{align*}
In the following, if we consider $m=4$, we obtain
\begin{equation*}
E_2^c\left(\tilde{\phi}\right) =\frac{1}{2}\int_M \left(\left|\tau(\phi)\right|^2+\frac{2}{3}\Scal \left|d\phi\right|^2-2\tr\langle d\phi\left(\Ric(\cdot)\right), d\phi(\cdot)\rangle \right) \ v_g,
\end{equation*}
i.e., $E_2^c\left(\tilde{\phi}\right) =E_2^c\left(\phi\right)$.

Furthermore, considering $\left\{\phi_t\right\}_{t\in\mathbb{R}}$ an arbitrary smooth variation of $\phi$ and taking the derivative with respect to $t$ of the two c-bienergy functionals, we get
$$
\int_M\langle\tau_2^c\left(\tilde{\phi}\right), V\rangle \ v_{\tilde{g}} =\int_M\langle  \tau_2^c\left(\phi\right), V\rangle \ v_{g}, 
$$
i.e.,
$$
\int_M\langle e^{4\rho}\tau_2^c\left(\tilde{\phi}\right)-\tau_2^c\left(\phi\right), V\rangle \ v_{g}=0. 
$$
Now, we can conclude.
\end{proof}

\begin{Bem}	
When $M^4$ is non-compact, for any compact regular domain $D$ and any smooth variation $\left\{\phi_t\right\}_{t\in\mathbb{R}}$ of $\phi$ supported in $D$ we have
$$
E_2^c\left(\tilde{\phi}_t; D\right) = E_2^c\left(\phi_t; D\right)+2\int_{\partial D} \langle Y_{4,t},\eta\rangle \ v_{g} + \int_{\partial D} \langle Y_{5,t},\eta\rangle \ v_{g},
$$
where $Y_{5,t}=\left|d\phi_t\right|^2\nabla \rho$ and $\eta=\eta(p)$ denotes the outward pointing unit normal at a point $p\in\partial D$.

Taking the derivative with respect to $t$ in the above relation, we get
$$
\int_D\langle\tau_2^c\left(\tilde{\phi}\right), V\rangle \ v_{\tilde{g}} =\int_D\langle  \tau_2^c\left(\phi\right), V\rangle \ v_{g}
$$
and therefore $\tau_2^c\left(\tilde{\phi}\right)=e^{-4\rho}\tau_2^c(\phi)$ on $M^4$.
\end{Bem}

\begin{Bem}
We note that if $\phi:\mathbb{S}^4\to N^{n}$ is a c-biharmonic map then, removing the North pole of the sphere we obtain that $\tilde{\phi}:\left(\mathbb{S}^4\setminus\left\{\text{North pole}\right\}, \tilde{g}\right)\to N$, $\tilde{\phi}(p)=\phi(p)$, for any $p\in \mathbb{S}^4\setminus\left\{\text{North pole}\right\}$, is biharmonic, where $\tilde{g}$ is the flat metric induced by the stereographic projection $\Pi$. Equivalently, $\phi\circ \Pi^{-1}:\mathbb{R}^4\to N^n$ is biharmonic. A converse of this remark was proved in  \cite[Lemma 3.4]{MR2094320} by adding some integrability conditions. More precisely, the author showed that if $\varphi\in C^{\infty}\cap W^{2,2}\left(\mathbb{R}^4,\mathbb{S}^n\right)$ is a biharmonic map, then   $\phi=\varphi\circ\Pi:\mathbb{S}^4\to\mathbb{S}^{n}$ is a smooth c-biharmonic map defined on the whole of $\mathbb{S}^4$ (the isolated singularity was removed).
\end{Bem}

\begin{Cor}
Let $\sigma:M^4\to N^4$ be a conformal diffeomorphism and consider $\varphi:N^4\to P^n$ a smooth map. Assume that $M$ and $N$ are Ricci-flat. Then, $\phi=\varphi\circ\sigma:M^4\to P^n$ is biharmonic if and only if $\varphi$ is biharmonic.
\end{Cor}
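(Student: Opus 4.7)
The plan is to reduce the biharmonicity of $\phi$ to that of $\varphi$ via the conformal invariance of $\tau_2^c$ in dimension four, combined with the fact that on Ricci-flat domains $\tau_2^c$ coincides with $\tau_2$.

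First, since $\sigma$ is a conformal diffeomorphism, we may write $\sigma^* h_N = e^{2\rho}g$ for some $\rho\in C^\infty(M)$. Introduce on $M$ the conformally related metric $\tilde g = e^{2\rho}g$ so that $\sigma:(M,\tilde g)\to (N,h_N)$ becomes an isometry. Let $\tilde\phi:(M,\tilde g)\to P^n$ denote the map $\phi$ viewed with respect to $\tilde g$; then $\tilde\phi = \varphi\circ\sigma$ with $\sigma$ an isometry, so by the standard naturality of the tension and bitension fields under isometric changes of the domain we have
\begin{equation*}
\tau(\tilde\phi) = \tau(\varphi)\circ\sigma,\qquad \tau_2(\tilde\phi) = \tau_2(\varphi)\circ\sigma.
\end{equation*}

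Next, I would apply the conformal invariance theorem proved above (using its non-compact version as in the remark, since $M$ is not assumed compact) to obtain the pointwise identity
\begin{equation*}
\tau_2^c(\tilde\phi) = e^{-4\rho}\,\tau_2^c(\phi)
\end{equation*}
on $M^4$. Now I invoke Proposition \ref{prop-energies-equal}(iii) together with its subsequent remark on the non-compact case: since $(M,g)$ is Ricci flat, $\tau_2^c(\phi) = \tau_2(\phi)$; and since $(N,h_N)$ is Ricci flat and $(M,\tilde g)$ is isometric to it, $(M,\tilde g)$ is also Ricci flat, so $\tau_2^c(\tilde\phi) = \tau_2(\tilde\phi)$. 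Combining these identities yields
\begin{equation*}
\tau_2(\varphi)\circ\sigma = \tau_2(\tilde\phi) = e^{-4\rho}\,\tau_2(\phi).
\end{equation*}

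Because $\sigma$ is a diffeomorphism and $e^{-4\rho}$ is nowhere vanishing, the right-hand side vanishes identically on $M$ if and only if $\tau_2(\varphi)$ vanishes identically on $N$, which is precisely the equivalence claimed. The only delicate point is ensuring the conformal invariance formula $\tau_2^c(\tilde\phi) = e^{-4\rho}\tau_2^c(\phi)$ is available in the non-compact setting, but this is exactly the content of the remark following the conformal invariance theorem, so no additional work is required.
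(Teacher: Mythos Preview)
Your proof is correct and is precisely the argument the paper intends: the corollary is stated without proof because it is the direct combination of the conformal invariance formula $\tau_2^c(\tilde\phi)=e^{-4\rho}\tau_2^c(\phi)$ in dimension four (including its non-compact version) with the observation that $\tau_2^c=\tau_2$ on Ricci-flat domains. Your use of the pulled-back metric $\tilde g=\sigma^*h_N$ to turn $\sigma$ into an isometry, followed by naturality of $\tau_2$ under isometries, is exactly the expected unpacking.
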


As an example, we can consider the conformal inversion of $\mathbb{R}^4$ at $\mathbb{S}^3$ given by $\sigma:\mathbb{R}^4\setminus\left\{\bar{0}\right\}\to \mathbb{R}^4\setminus\left\{\bar{0}\right\}$, $\sigma\left(\bar{x}\right)=\bar{x}/\left|\bar{x}\right|^2$, and let $\phi: \mathbb{R}^4\setminus\left\{\bar{0}\right\}\to N^n $ be a smooth map. Then, we conclude that $\phi\circ \sigma$ is biharmonic if and only if $\phi$ is biharmonic (see \cite{MR3357596} and \cite{MR4110268}). We note that $\sigma$ is biharmonic too (see \cite{MR1952859}).


\begin{Cor}
Let $\sigma:M^4\to N^4$ be a conformal diffeomorphism and consider $\varphi:N^4\to P^n$ a harmonic map. Assume that $N$ is Ricci-flat. Then, $\phi=\varphi\circ\sigma:M^4\to P^n$ is c-biharmonic.
\end{Cor}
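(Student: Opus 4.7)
The plan is to combine three ingredients already developed in the paper: the collapse of the conformal bienergy to the ordinary bienergy when the domain is Ricci-flat (Proposition \ref{prop-energies-equal}), the isometric invariance of $\tau_2^c$, and the conformal invariance of $\tau_2^c$ in dimension four.

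First I would exploit the Ricci-flatness of $N^4$ to apply item iii) of Proposition \ref{prop-energies-equal} with $N$ in the role of the domain. This yields $E_2^c(\varphi)=E_2(\varphi)$ and in particular $\tau_2^c(\varphi)=\tau_2(\varphi)$. Since $\varphi$ is harmonic, $\tau(\varphi)=0$, hence $\tau_2(\varphi)=0$, and therefore $\varphi$ is c-biharmonic as a map from $(N^4,h)$ to $P^n$.

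Next I would promote $\sigma$ to an isometry by endowing $M^4$ with the pullback metric $\tilde g := \sigma^{*}h$. Since $\sigma$ is a conformal diffeomorphism, there exists $\rho\in C^\infty(M)$ with $\tilde g = e^{2\rho}g$. By construction $\sigma:(M^4,\tilde g)\to(N^4,h)$ is an isometry, so $\phi=\varphi\circ\sigma$, viewed as a map from $(M^4,\tilde g)$ to $P^n$, is isometrically equivalent to $\varphi$ and hence c-biharmonic with respect to $\tilde g$; that is, $\tau_2^c(\phi;\tilde g)=0$.

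Finally, I would invoke the conformal invariance theorem in dimension four for the conformal change $\tilde g=e^{2\rho}g$ on $M^4$ applied to the map $\phi:M\to P^n$. This gives the pointwise identity $\tau_2^c(\phi;\tilde g)=e^{-4\rho}\tau_2^c(\phi;g)$, and the vanishing of the left-hand side forces $\tau_2^c(\phi;g)=0$, which is the desired conclusion. The only delicate point is that $M$ is not assumed compact, whereas the conformal invariance theorem was stated for compact $M$; this is handled by the remark immediately following that theorem, which upgrades the invariance to the pointwise level of the c-bitension field and thus covers the non-compact case we need here.
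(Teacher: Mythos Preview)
Your argument is correct and follows exactly the line the paper intends: reduce $\tau_2^c(\varphi)$ to $\tau_2(\varphi)$ via Ricci-flatness of $N$ (either by Proposition~\ref{prop-energies-equal}\,iii) together with the subsequent remark, or by direct inspection of \eqref{bitension-conformal}), use harmonicity of $\varphi$ to get $\tau_2(\varphi)=0$, pull back $h$ along $\sigma$ to make $\sigma$ an isometry, and then invoke the four-dimensional conformal invariance of $\tau_2^c$ (in the pointwise form supplied by the remark for non-compact $M$). The paper states this as a corollary without proof, and your write-up is precisely the argument a reader would supply.
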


\begin{Cor}\label{prop:conformal-map}
Let	$\phi:M^4\to N^4$ be a conformal diffeomorphism. Then, $\phi$ is c-biharmonic if and only if the scalar curvature of $N$ is constant. In particular, any conformal diffeomorphism between four-dimensional space forms is c-biharmonic.
\end{Cor}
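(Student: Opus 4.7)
The plan is to reduce, via the conformal invariance of $E_2^c$ in dimension four, to the case of an isometry, and then to exploit the simple form that \eqref{bitension-conformal} takes on totally geodesic maps, exactly as in Proposition~\ref{prop:identity}.

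First, since $\phi:(M^4,g)\to(N^4,h)$ is a conformal diffeomorphism, there exists $\rho\in C^\infty(M)$ with $\phi^{\ast}h=e^{2\rho}g$. I would introduce the rescaled domain metric $\tilde g:=e^{2\rho}g$ on $M$; by construction, $\tilde\phi:=\phi:(M^4,\tilde g)\to(N^4,h)$ is a Riemannian isometry. The conformal invariance theorem proved just above (applied with $m=4$) yields
\[
\tau_2^c(\tilde\phi)=e^{-4\rho}\,\tau_2^c(\phi),
\]
so $\phi$ is c-biharmonic with respect to $g$ if and only if $\tilde\phi$ is c-biharmonic with respect to $\tilde g$. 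This reduction is the only place where the dimension-four hypothesis enters.

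Next, since $\tilde\phi$ is an isometry, it is totally geodesic, hence $\tau(\tilde\phi)=0$ and $\bar\nabla d\tilde\phi=0$. Plugging this into \eqref{bitension-conformal}, the terms $\tau_2(\tilde\phi)$, $\widetilde{\Scal}\,\tau(\tilde\phi)$ and $\tr(\bar\nabla d\tilde\phi)(\widetilde{\Ric}(\cdot),\cdot)$ all vanish identically, leaving only
\[
\tau_2^c(\tilde\phi)=\tfrac{1}{3}\,d\tilde\phi\bigl(\tilde\nabla\widetilde{\Scal}\bigr),
\]
where $\widetilde{\Scal}$ denotes the scalar curvature of $(M,\tilde g)$. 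Since $\tilde\phi$ is an isometry, $\widetilde{\Scal}=\Scal_N\circ\tilde\phi$, and since $d\tilde\phi_p$ is a linear isomorphism at every point $p\in M$, we conclude that $\tau_2^c(\tilde\phi)=0$ if and only if $\Scal_N$ is constant on $N$. Combined with the previous step, this establishes the claimed equivalence, and the ``in particular'' statement follows immediately because $\mathbb{S}^4$ has constant scalar curvature.

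There is no serious obstacle in this argument: it is essentially one invocation of the conformal invariance theorem, followed by the observation that for an isometry every term of \eqref{bitension-conformal} other than $\tfrac{1}{3}d\phi(\nabla\Scal)$ is linear in either $\tau(\phi)$ or $\bar\nabla d\phi$ and therefore vanishes identically. The only point requiring mild care is keeping track of the interplay between the conformal change of scalar curvature on the domain and the pullback by the isometry $\tilde\phi$, which is transparent once one works with the intrinsic metric $\tilde g=\phi^{\ast}h$.
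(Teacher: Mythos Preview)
Your proof is correct and follows exactly the line of argument implicit in the paper: the corollary is placed immediately after the conformal invariance theorem and is meant to be read as ``reduce to an isometry via $\tilde g=\phi^{\ast}h$, then invoke the computation behind Proposition~\ref{prop:identity}.'' Your write-up makes this explicit, and there is nothing to add.
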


\begin{Bem}
Biharmonic conformal diffeomorphisms between four-dimensional space forms were completely classified in \cite{MR4610300}, and the only such maps are M\"obius transformations from flat manifolds.
\end{Bem}

Using the classical results of Yamabe, Trudinger, Aubin and Schoen (for a comprehensive survey on this topic see \cite{MR888880}) we immediately infer

\begin{Cor} 
Let $\left(M^4,g\right)$ be a compact manifold. Then, there exists a metric $\tilde{g}$ conformally equivalent to $g$ such that $\Id:\left(M^m,g\right)\to\left(M^m,\tilde{g}\right)$ is c-biharmonic. In the particular case of the unit Euclidean sphere $\left(\mathbb{S}^4,g\right)$, the metric $\tilde{g}$ is given by $\phi^{*}g$ multiplied by a positive constant, where $\phi$ is a global conformal diffeomorphism of $\left(\mathbb{S}^4,g\right)$; when $\left(M^4,g\right)$ is Ricci-flat, the metric $\tilde{g}$ is just the metric $g$ multiplied by a positive constant.
\end{Cor}

\begin{Bem}
The biharmonicity of conformal diffeomorphisms $\phi:M^4\to N^4$, where $M$ is an Einstein manifold, was studied in \cite{MR3881994} and an interesting link between biharmonicity and a certain Yamabe-type problem was established. 
\end{Bem}

As an application of the above results we can give a further example.

\begin{Bsp}
Let 
$$
\phi_{\zeta}: M^4_{\alpha}=\left(\mathbb{S}^3\times I, g=\alpha^2(r)g_{\mathbb{S}^3}+dr^2\right)\to 
N^{4}_{\beta}=\left(\mathbb{S}^3\times J, h=\beta^2(\zeta)g_{\mathbb{S}^3}+d\zeta^2\right)
$$
be a conformal rotational symmetric diffeomorphism defined by
$$
\phi_{\zeta}(\theta,r)=(\theta,\zeta(r)), \qquad \forall \theta\in\mathbb{S}^3,\ r\in I, 
$$
where $I$ and $J$ are some real open intervals and $\zeta:I\to J$ is a smooth function. Then, $\phi_\zeta$ is c-biharmonic if and only if the scalar curvature of $N^{4}_{\beta}$ is constant, i.e., $\beta$ is a solution of

\begin{equation}\label{eq-beta}
2\left(\beta'\right)^3-2\beta'-\beta\beta'\beta''-\beta^2\beta'''= 0,
\end{equation}
where $\beta'$ denotes the derivative of $\beta$ with respect to $\zeta$.

This equation can be deduced either using that the scalar curvature of $N^{4}_{\beta}$ is constant, or by a direct computation from $\tau_2^c\left(\phi_{\zeta}\right)=0$.

Of course, $\beta(\zeta)=\zeta$, $\beta(\zeta)=(1/d)\sin(d\zeta)$ and $\beta (\zeta)=(1/d)\sinh(d\zeta)$, where $d$ is a positive constant, satisfy the above equation, as they correspond to the  (whole) four-dimensional Euclidean space, Euclidean sphere and hyperbolic space, respectively. Moreover, a first integral of \eqref{eq-beta} is given by 
$$
\beta^2\left(1-\beta'^2+\beta\beta''\right)=a,\qquad a\in \mathbb{R},
$$
(see \cite{MR3357596} and \cite{MR3577677}). If $a\neq 0$, the solution $\beta$ provides a space of constant scalar curvature, but with non-constant sectional curvature. If $a=0$, the solution $\beta$ produces a space of constant sectional curvature and, further, if we impose the boundary condition $\beta(0)=0$ we obtain the above mentioned explicit solutions.

The conformal rotational symmetric diffeomorphism from $\mathbb{R}^4$ in $\mathbb{R}^4$, $\mathbb{S}^4$ or $\mathbb{H}^4$ can be determined by solving the equation
$$
\dot{\zeta}=\pm \frac{\beta}{r},
$$
where $\dot{\zeta}$ denotes the derivative of $\zeta$ with respect to $r$. The solutions of the above equation are biharmonic (and c-biharmonic) maps and they extend smoothly over the origin of $\mathbb{R}^4$.

In order to find the conformal rotational symmetric diffeomorphism from $\mathbb{S}^4$ to $\mathbb{R}^4$, $\mathbb{S}^4$ or $\mathbb{H}^4$, we can either solve the equation
$$
\dot{\zeta}=\pm \frac{\beta}{\sin r},
$$
or compose all conformal rotational symmetric diffeomorphisms from $\mathbb{R}^4$  with the stereographic projection of $\mathbb{S}^4$ in $\mathbb{R}^4$. These maps are c-biharmonic and smooth on the whole of $\mathbb{S}^4$. 

We also mention that all biharmonic rotational symmetric maps from $\mathbb{R}^4$ to $\mathbb{R}^4$, $\mathbb{S}^4$ or $\mathbb{H}^4$, which are not necessarily conformal, provide c-biharmonic rotationally symmetric maps from $\mathbb{S}^4$, possible without the North or South pole, in $\mathbb{R}^4$, $\mathbb{S}^4$ or $\mathbb{H}^4$. In particular, the harmonic rotationally symmetric maps from $\mathbb{R}^4$ in $\mathbb{S}^4$ of Jäger and Kaul (see 
\cite{MR705882}), provide (non-harmonic) c-biharmonic rotationally symmetric maps from $\mathbb{S}^4$, possibly without the North or South pole, in $\mathbb{S}^4$.
\end{Bsp}  

\section{c-biharmonic hypersurfaces in space forms}

Let $\phi:M^m\to N^{m+1}(c)$ be a hypersurface in a space form of constant sectional curvature $c$. We denote by $\eta$ the unit section in the normal bundle of $M^m$ in $N^{m+1}(c)$, by $A=A_\eta$ the shape operator of $M$ and by $f=\tr A/m$ the mean curvature function of $M$. When $f$ is constant, we say that $M$ is a CMC (constant mean curvature, including minimal) hypersurface. 

We recall (see for example \cite[Section 2.4]{MR4410183}) 
that, for any hypersurface in a space form, the bitension field looks like
$$
\frac{1}{m}\tau_2(\phi)= \left(-\Delta f -f|A|^2+mcf\right)\eta-mf\nabla f-2A(\nabla f).
$$
Therefore, $\tau_2^c(\phi)$ takes the following form
\begin{align}\label{tau2c-tangent-part-normal-part}
\frac{1}{m}\tau_2^c(\phi)= &\left(-\Delta f -f|A|^2+mcf-\frac{2}{3}\Scal f+\frac{2}{m}\langle A, \Ric\rangle\right)\eta \nonumber\\
&  -mf\nabla f-2A(\nabla f)+\frac{1}{3m}\nabla \Scal.
\end{align}

Since for any hypersurface in a space form $N^{m+1}(c)$ we have
\begin{equation}\label{ric}
\Ric(X)=c(m-1)X+mfA(X)-A^2(X), \qquad \forall X\in C(TM),
\end{equation}
and 
\begin{equation}\label{scal}
\Scal=c(m-1)m+m^2f^2-|A|^2,
\end{equation}
the above formula can be rewritten as

\begin{align}\label{tau2c-spaceforms} 
	\frac{1}{m}\tau_2^c(\phi)= &\left(-\Delta f +\frac{f}{3}\left(5|A|^2-2m^2f^2-c\left(2m^2-11m+6\right)\right)-\frac{2}{m}\tr A^3\right)\eta \nonumber\\
	& -mf\nabla f-2A(\nabla f)+\frac{1}{3m}\nabla \Scal.
\end{align}

We directly infer 

\begin{Prop}
Any totally geodesic hypersurface in $N^{m+1}(c)$ is c-biharmonic.
\end{Prop}

\begin{Prop}
Let $\phi:M^m\to N^{m+1}(c)$ be a minimal hypersurface. Then, $M$ is c-biharmonic if and only if $\tr A^3=0$ and it has constant scalar curvature.
\end{Prop}

When $M^m$, $m\geq 2$, is a minimal hypersurface in a unit Euclidean sphere, we can establish an interesting link between the c-biharmonic hypersurfaces and the well-known Willmore hypersurfaces. We just recall here that the Willmore hypersurfaces in Euclidean spheres are the critical points of the Willmore functional
$$
\mathcal{W}(\phi)=\left(\frac{m}{m-1}\right)^{m/2} \int_M\left(|A|^2-mf^2\right)^{m/2} \ v_g,
$$
which is invariant under the conformal transformations of $\mathbb{S}^{m+1}$. Using Moebius invariants, the first variation was given by Wang in \cite{MR1639852}. The Euler-Lagrange equation can be written as
$$
\rho^{m-2}\left(2f|A|^2-mf^3-\tr A^3\right)+(m-1)\Delta \left(\rho^{m-2}f\right)+\langle \Hess \rho^{m-2},mf\Id-A\rangle=0,
$$
where $\rho^2=|A|^2-mf^2$ (see also \cite{MR2150692}).
\begin{Theorem}\label{thm-Willmore}
Let $\phi:M^m\to \mathbb{S}^{m+1}$, $m\geq 2$, be a minimal hypersurface with $A\neq 0$. Then,  $M$ is c-biharmonic if and only if it is a Willmore hypersurface with constant scalar curvature.
\end{Theorem}

\begin{Bem}\label{remark-4-curvatures}
When $m=4$, minimal Willmore hypersurfaces with constant scalar curvature were studied in \cite{MR2150692}. From Theorem $1$ in \cite{MR2150692}, we conclude that there are examples of minimal c-biharmonic hypersurfaces (which are not Einstein) with $4$ constant distinct principal curvatures: $1+\sqrt{2}$, $1-\sqrt{2}$, $-1+\sqrt{2}$ and $-1-\sqrt{2}$. Analogously, when $m=6$, according to Theorem $5.1$ in \cite{MR1868938}, there are examples of minimal c-biharmonic hypersurfaces (which are not Einstein) with $6$ constant distinct principal curvatures: $2+\sqrt{3}$, $2-\sqrt{3}$, $1$, $-1$, $-2+\sqrt{3}$ and $-2-\sqrt{3}$. Moreover, very recently, in \cite{BMNOR} it was proved that there exist non-minimal c-biharmonic hypersurfaces in Euclidean spheres with $4$ constant distinct principal curvatures. Recall that, in the biharmonic case, all known examples of proper biharmonic hypersurfaces have only $1$ or $2$ constant distinct principal curvatures: $1$ or $-1$ and $1$, respectively. 
\end{Bem}

We also see that if $M^m$ is a c-biharmonic hypersurface in a space form and has constant scalar curvature, then it is biconservative, i.e., the tangent part of the bitension field $\tau_2(\phi)$ vanishes (see for example \cite{MR4410183}). Using the results in \cite{MR4215275} and \cite{MR4903596}  we obtain 

\begin{Theorem}
Let $\phi:M^m\to N^{m+1}(c)$ be a compact c-biharmonic hypersurface. If $M$ has constant scalar curvature and non-negative sectional curvature, then $\nabla A=0$.
\end{Theorem}

\begin{Theorem}
Let $\phi:M^3\to N^{4}(c)$ be a c-biharmonic hypersurface. If $M$ has constant scalar curvature, then it is CMC.
\end{Theorem}

Thus, it seems natural to study CMC c-biharmonic hypersurfaces in $N^{m+1}(c)$. From \eqref{tau2c-spaceforms} we obtain

\begin{Prop}
Let $\phi:M^m\to N^{m+1}(c)$ be a CMC hypersurface. Then, $M$ is c-biharmonic if and only if
\begin{equation*}
\left\{
\begin{array}{l}
mf\left(5|A|^2-2m^2f^2-c\left(2m^2-11m+6\right)\right)-6\tr A^3 = 0, \\\\
\nabla \Scal = 0.
\end{array}
\right.
\end{equation*}
\end{Prop}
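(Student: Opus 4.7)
The plan is to simply specialize the general CMC formula \eqref{tau2c-spaceforms} to the constant mean curvature case and then decompose the resulting vector field along $\phi^{-1}TN^{m+1}(c)$ into its components tangent and normal to $M^m$.

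First I would observe that since $f$ is constant, both $\nabla f$ and $\Delta f$ vanish identically. Substituting this into \eqref{tau2c-spaceforms} immediately kills the terms $-mf\nabla f$ and $-2A(\nabla f)$ in the tangential part, as well as the $-\Delta f$ term in the normal part, leaving
\begin{equation*}
\frac{1}{m}\tau_2^c(\phi)= \left(-f|A|^2+mcf-\frac{2}{3}\Scal f+\frac{2}{m}\langle A, \Ric\rangle\right)\eta +\frac{1}{3m}\nabla \Scal.
\end{equation*}

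Next I would use the fact that $\eta$ is normal to $M^m$ while $\nabla\Scal\in C(TM)$ is tangent, so the right-hand side is already written as an orthogonal decomposition in $\phi^{-1}TN^{m+1}(c)$. Hence $\tau_2^c(\phi)=0$ is equivalent to the simultaneous vanishing of the two components. The tangential component vanishes precisely when $\nabla\Scal=0$, and the normal component, after multiplication by the nonzero factor $m$, gives exactly
\begin{equation*}
mf\left(-|A|^2+mc-\tfrac{2}{3}\Scal\right)+2\langle A, \Ric\rangle=0,
\end{equation*}
which yields the stated system. No obstacle arises here: the work has all been done in producing formula \eqref{tau2c-spaceforms}, and the proposition is just the specialization $f\equiv\text{const}$ of that identity.
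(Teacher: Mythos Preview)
Your argument is correct and is exactly the approach the paper takes: the authors simply write ``From here we directly obtain'' immediately after formula \eqref{tau2c-spaceforms}, meaning the proposition is just the specialization $f\equiv\mathrm{const}$ followed by the obvious tangent/normal decomposition you describe.
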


Using the known formula which holds for any CMC hypersurface in a space form (see \cite{MR0266109})
\begin{equation}\label{nomitzu}
m f \tr A^3=-|\nabla A|^2+c m^2 f^2+|A|^2\left(|A|^2-c m\right)-\frac{1}{2}\Delta|A|^2,
\end{equation}
we deduce

\begin{Prop}
Let	$\phi:M^m\to N^{m+1}(c)$ be a non-minimal CMC c-biharmonic hypersurface. Then, $M$ has constant scalar curvature and
\begin{equation}\label{inequality-scalar curvature}
0\leq 6|\nabla A|^2=|A|^2\left(6|A|^2-5m^2f^2-6c m\right)+m^2f^2\left(2m^2f^2+c\left(2m^2-11m+12\right)\right).
\end{equation}
Equality holds if and only if $\nabla A=0$.
\end{Prop}

In fact, the equality case in \eqref{inequality-scalar curvature} is a characterization formula for c-biharmonicity when $M^m$ is a non-minimal hypersurface in a space form with $\nabla A=0$. Further, since for any hypersurface in a space form,  $\nabla A=0$ is equivalent to $f$ is constant and $\nabla \Ric=0$ (see \cite{MR0238229} and \cite{MR0296859}), we can state
\begin{Prop}

Let	$\phi:M^m\to N^{m+1}(c)$ be a non-minimal CMC hypersurface. If $\nabla \Ric=0$, then $M$ is c-biharmonic if and only if 	
\begin{equation}\label{c-biharmonicity-Einstein}
|A|^2\left(6|A|^2-5m^2f^2-6c m\right)+m^2f^2\left(2m^2f^2+c\left(2m^2-11m+12\right)\right)=0.
\end{equation}
\end{Prop} 

We know that a minimal Einstein hypersurface is c-biharmonic. When the hypersurface is non-minimal, the situation is more rigid.

\begin{Theorem}\label{c-biharmonic-umbilical}
Let	$\phi:M^m\to N^{m+1}(c)$ be a non-minimal CMC Einstein hypersurface with $\Ric=\lambda\Id$, where $\lambda$ is a real constant. Then, $M$ is c-biharmonic if and only if $M$ is umbilical,
$$
\lambda=\frac{6mc(m-1)}{2m^2-5m+6},\qquad f^2=\frac{c\left(-2m^2+11m-6\right)}{2m^2-5m+6}, \qquad c\neq 0.
$$ 
In this case, one of the following holds
\begin{itemize}
	\item[i)] $c>0$ and $m\leq 4$;
	\item[ii)] $c<0$ and $m\geq 5$.
\end{itemize}
\end{Theorem}  

\begin{proof}
Since $M^m$ is a non-minimal Einstein hypersurface in $N^{m+1}(c)$, from \eqref{ric}, taking the inner product with $A$, and then the trace, we obtain

\begin{equation}\label{similar-nomitzu}
mf\tr A^3=m^2f^2\left(c(m-1)-\lambda+|A|^2\right).
\end{equation} 

Replacing \eqref{similar-nomitzu} in \eqref{nomitzu}, as $M$ is also CMC with $|A|^2$ constant and $\nabla A=0$, we get

$$
m^2f^2\left(|A|^2-\lambda+c(m-2)\right)=|A|^2\left(|A|^2-cm\right).
$$ 

Next, from \eqref{scal} we see that

\begin{equation*}\label{norm-A^2}
|A|^2=m^2f^2-m\left(\lambda-c(m-1)\right).
\end{equation*}

From the last two relations we achieve

$$
\left(\lambda-c(m-2)\right)\left((m-1)f^2-(\lambda-c(m-1))\right)=0.
$$

Further, the classification of non-minimal CMC Einstein hypersurfaces in space forms, when $m\geq 2$, can be done using standard arguments according to $\lambda=c(m-2)$ or $\lambda \neq c(m-2)$ (see also \cite{MR0296859}). Therefore, every non-minimal CMC Einstein hypersurface $M^m$, $m\geq 2$, in a space form $N^{m+1}(c)$, with $\Ric=\lambda\Id$, where $\lambda$ is a real constant, falls into one of the following cases. 

\medskip\noindent
\textbf{Case A:} $\lambda=c(m-2)$. Then, $|A|^2=m\left(mf^2+c\right)$
and three subcases occur according to the sign of $c$:

\begin{itemize}
	\item if $c<0$, then either $M$ is umbilical and $f^2=c/(1-m)$, or $M$ is not umbilical, $m=2$ and it has $2$ constant distinct principal curvatures $f\pm\sqrt{f^2+c}$.
	\item if $c=0$, then $M$ is not umbilical and has $2$ constant distinct principal curvatures $0$ and $mf$, with multiplicities $m-1$ and $1$, respectively.
	\item if $c>0$, then $M$ is not umbilical and has $2$ constant distinct principal curvatures $\left(mf\pm\sqrt{m^2f^2+4c}\right)/2$.
\end{itemize}

\medskip\noindent
\textbf{Case B:} $\lambda\neq c(m-2)$. Then, $|A|^2=mf^2$, i.e., $M$ is umbilical, and
$$
f^2=\frac{\lambda-c(m-1)}{m-1}.
$$

From \eqref{tau2c-tangent-part-normal-part} we deduce that a non-minimal CMC Einstein hypersurface in a space form is c-biharmonic if and only if 

\begin{equation}\label{c-bihar-Einstein}
-|A|^2+mc-\frac{2}{3}\left(c(m-1)m+m^2f^2-|A|^2\right)+2\lambda=0.
\end{equation}

Substituting the data from Case A into \eqref{c-bihar-Einstein} yields no solutions; hence the c-biharmonic condition cannot occur in Case A. If we consider the Case B, replacing $|A|^2=mf^2$ and $f^2=(\lambda-c(m-1))/(m-1)$ in \eqref{c-bihar-Einstein} we obtain

\begin{equation}\label{value-lambda}
\lambda=\frac{6mc(m-1)}{2m^2-5m+6}
\end{equation}

and

\begin{equation}\label{value-f^2}
f^2=\frac{c\left(-2m^2+11m-6\right)}{2m^2-5m+6}.
\end{equation}

Finally, since $f^2>0$, we must have $c\left(-2m^2+11m-6\right)>0$. As $-2m^2+11m-6>0$ precisely for $2\le m\le 4$, we conclude that either $c>0$ and $m\le 4$, or $c<0$ and $m\ge 5$. 

In order to complete the proof, we need to consider also the case $m=1$. Since, in this case, $M$ is umbilical and $\lambda=0$, we can check that \eqref{c-bihar-Einstein} is equivalent to $f^2=c$ and so $c>0$. Therefore, relations \eqref{value-lambda} and $\eqref{value-f^2}$ hold also for $m=1$.

\end{proof}

The following statement provides a rigidity phenomenon showing that, under a concrete pinching condition on $f$ and $m$, any non-minimal CMC c‑biharmonic hypersurface in $N^{m+1}(c)$ which is also biharmonic necessarily satisfies $\nabla A=0$.

\begin{Prop}\label{c-biharmonic-biharmonic}
Let	$\phi:M^m\to N^{m+1}(c)$ be a non-minimal CMC c-biharmonic hypersurface. Assume that $M$ is biharmonic, $0<f^2\leq  c(m-2k)^2/m^2$, for some non-negative integer $k$ such that $2k+1\leq m$, and $m\in\left[m_1,m_2\right]$, where $m_1,m_2$ are the real roots of
\begin{equation}\label{second-degree-equation}
m^2-2(k+2)m+2k^2+3=0.
\end{equation}
Then,
$$
c>0, \quad \nabla A=0, \quad f^2=\frac{c(m-2k)^2}{m^2} \quad \text{and} \quad (k,m)\in \left\{(0,1),(0,3),(1,5), (3,7)\right\}.
$$ 
\end{Prop}

\begin{proof}
Recall that (see for example \cite{MR4410183}), a non-minimal CMC hypersurface in $N^{m+1}(c)$ is biharmonic if and only if $|A|^2=cm$. Thus, $c>0$. 

On the other hand, from \eqref{inequality-scalar curvature} and our hypothesis, we have
\begin{equation}\label{inequalities-for-nabla-A}
0\leq\frac{3}{m^2f^2}|\nabla A|^2=m^2f^2+c\left(m^2-8m+6\right)\leq 2c\left(m^2-2(k+2)m+2k^2+3\right)\leq 0.
\end{equation}
Since \eqref{second-degree-equation} has two real roots, we obtain $k\in\left\{0,1,2,3,4\right\}$, and from \eqref{inequalities-for-nabla-A} we conclude by direct computations.

\end{proof}

In the following we study the c-biharmonicity of hypersurfaces in $N^{m+1}(c)$ with $\nabla A=0$, i.e., with $1$ or $2$ constant distinct principal curvatures.

\subsection{c-biharmonic hypersurfaces in Euclidean spaces}
It is not difficult to check that the hyperplanes of $\mathbb{R}^{m+1}$ are totally geodesic and c-biharmonic and no $m$-dimensional hypersphere or cylinder is c-biharmonic in $\mathbb{R}^{m+1}$.

\subsection{c-biharmonic hypersurfaces in Euclidean spheres}
First, we consider the canonical inclusion $\iota\colon\s^m(r)\to\s^{m+1}$ of the hypersphere $\mathbb{S}^m(r)$ in $\mathbb{S}^{m+1}$. For
\begin{align*}
\mathbb{S}^{m}(r) &=\mathbb{S}^{m}(r)\times\left\{\sqrt{1-r^2}\right\} \\
&= \left\{\left(\overline{x},\sqrt{1-r^2}\right)\in\mathbb{R}^{m+2}\ |\  \left|\overline{x}\right|^2=r^2,\ 0<r\leq 1\right\},
\end{align*}
we can choose $\eta=\left(\left(\sqrt{1-r^2}/r\right)\overline{x},-r\right)$ and thus the shape operator is given by $A=A_\eta=-\left(\sqrt{1-r^2}/r\right)\Id$. 

It is not difficult to check that
$$
H=-\frac{\sqrt{1-r^2}}{r}\eta, \qquad \Ric=\frac{m-1}{r^2}\Id, \qquad \Scal=\frac{m(m-1)}{r^2}.
$$
Then, by straightforward computations, we obtain the tension and bitension fields
$$
\tau(\iota)=-\frac{m\sqrt{1-r^2}}{r}\eta, \qquad
\tau_2(\iota)=\frac{m^2\sqrt{1-r^2}\left(1-2r^2\right)}{r^3} \eta,
$$
and
$$
\langle A, \Ric\rangle=-\frac{m(m-1)\sqrt{1-r^2}}{r^3}.
$$
Consequently, we obtain the formula for $\tau_2^c (\iota)$ as follows
\begin{equation}\label{eq:tau2c-first-example}
\tau_2^c(\iota)=\frac{\sqrt{1-r^2}}{3r^3}\left(-6mr^2+2m^2-5m+6\right)m\eta.
\end{equation}

Using Proposition \ref{prop:restriction-of-radius}, we get the following lower bound for the radius
\begin{align*}
	r^2>\frac{2}{3}\frac{(m-1)(m-3)}{m}.
\end{align*}
When $r^2<1$, we immediately get the restriction $m\leq 4$, which agrees with Theorem \ref{c-biharmonic-umbilical}.  Now, we can state the following classification result

\begin{Theorem}\label{thm:inclusion-sphere}
The hypersphere $\s^m(r)$ is c-biharmonic in $\s^{m+1}$ if and only if
	\begin{enumerate}
		\item [i)] $r=1$, i.e., $\s^m(r)$ is totally geodesic in $\s^{m+1}$ and for any $m\geq 1$;
		\item [ii)] $m=1$ or $m=3$ and $r=1/\sqrt{2}$;
		\item [iii)] $m=2$ and $r=1/\sqrt{3}$;
		\item [iv)] $m=4$ and $r=\sqrt{3}/2$.
	\end{enumerate}
\end{Theorem}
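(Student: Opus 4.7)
The plan is to use the explicit formula for $\tau_2^c(\iota)$ derived in equation \eqref{eq:tau2c-first-example} just before the theorem, which expresses the c-bitension field as a scalar multiple of the unit normal $\eta$:
$$\tau_2^c(\iota)=\frac{m\sqrt{1-r^2}}{3r^3}\bigl(-6mr^2+2m^2-5m+6\bigr)\eta.$$
Since $\eta$ is nowhere zero and $r>0$, c-biharmonicity of $\iota$ is equivalent to the vanishing of the scalar coefficient, and I would split the analysis into two cases according to which of its two factors is zero.

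The first case, $\sqrt{1-r^2}=0$, is immediate: it gives $r=1$, producing the totally geodesic embedding $\mathbb{S}^m\hookrightarrow\mathbb{S}^{m+1}$ for every $m\geq 1$, hence case (i). The second case requires $-6mr^2+2m^2-5m+6=0$, which I would solve explicitly for
$$r^2=\frac{2m^2-5m+6}{6m},$$
and then check admissibility of the radius, i.e., $0<r^2<1$. The upper bound has to be strict because $r^2=1$ is already subsumed in case (i), and the lower bound just means that $r^2$ defines an honest proper hypersphere.

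For $r^2>0$ I would observe that $2m^2-5m+6$ has discriminant $25-48<0$ and is therefore positive for every real $m$, so this inequality is automatic. For $r^2<1$ I would rewrite the condition as $2m^2-11m+6<0$; its roots are $(11\pm\sqrt{73})/4$, so the admissible integer range is $m\in\{1,2,3,4\}$, in agreement with the restriction $r^2>2(m-1)(m-3)/(3m)$ already derived in Proposition~\ref{prop:restriction-of-radius}. Plugging each $m\in\{1,2,3,4\}$ into $r^2=(2m^2-5m+6)/(6m)$ yields respectively $r=1/\sqrt{2}$, $r=1/\sqrt{3}$, $r=1/\sqrt{2}$, and $r=\sqrt{3}/2$, which is exactly (ii)--(iv). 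I do not anticipate any genuine obstacle: once \eqref{eq:tau2c-first-example} is in hand, the theorem reduces to an elementary analysis of a single quadratic in $m$ together with a trivial substitution.
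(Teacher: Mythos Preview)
Your proposal is correct and follows essentially the same route as the paper: both start from \eqref{eq:tau2c-first-example}, split into the cases $r=1$ and $r^2=(2m^2-5m+6)/(6m)$, and then use the admissibility condition $r^2<1$ (equivalently $2m^2-11m+6<0$, which is exactly the inequality coming from Proposition~\ref{prop:restriction-of-radius}) to force $m\in\{1,2,3,4\}$. The only difference is that you spell out the elementary algebra (positivity of $2m^2-5m+6$ and the roots of $2m^2-11m+6$) in slightly more detail than the paper does.
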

\begin{proof}

From \eqref{eq:tau2c-first-example} we can see that $\tau_2^c(\iota)=0$ is equivalent to either $r=1$ corresponding to the totally geodesic case, or
$$
r^2=\frac{2m^2-5m+6}{6m},
$$ 
which gives rise to the other three cases.
\end{proof}

\begin{Bem}
Removing the North pole of the c-biharmonic hypersurface $\mathbb{S}^4\left(\sqrt{3}/2\right)$, the canonical inclusion $\tilde{\iota}:\left(\mathbb{S}^4(\sqrt{3}/2)\setminus\left\{\text{North pole}\right\}, \tilde{g}\right)\to\mathbb{S}^5$ is a biharmonic immersion which is not an isometric immersion but a conformal immersion, where $\tilde{g}$ is the flat metric induced by the stereographic projection. Or, we can say that $\iota\circ \Pi^{-1}:\mathbb{R}^4\to \mathbb{S}^5$ is a $SO(4)$-equivariant biharmonic map, where the action of $SO(4)$ on $\mathbb{S}^5$ fixes the last two coordinates of $\mathbb{R}^6$.
\end{Bem}

For hypersurfaces with $2$ constant distinct principal curvatures, we consider the canonical inclusion $\iota:\mathbb{S}^{m_1}\left(r_1\right)\times \mathbb{S}^{m_2}\left(r_2\right)\to\mathbb{S}^{m+1}$ of the generalized Clifford torus $\mathbb{S}^{m_1}\left(r_1\right)\times \mathbb{S}^{m_2}\left(r_2\right)$ in $\mathbb{S}^{m+1}$, where $r_1$, $r_2$ are positive real numbers such that $r_1^2+r_2^2=1$ and $m_1$, $m_2$ are positive integers, $m_1+m_2=m$. For
\begin{equation*}
\mathbb{S}^{m_1}\left(r_1\right)\times \mathbb{S}^{m_2}\left(r_2\right) = \left\{\left(\overline{x}_1,\overline{x}_2\right)\in\mathbb{R}^{m_1+1}\times\mathbb{R}^{m_2+1} |\  \left|\overline{x}_1\right|^2=r_1^2, \ \left|\overline{x}_2\right|^2=r_2^2, \  r_1^2+r_2^2=1 \right\},
\end{equation*}
we can choose $\eta=\left(\left(r_2/r_1\right)\overline{x}_1,-\left(r_1/r_2\right)\overline{x}_2\right)$ and thus, the shape operator is given by 
$$
A\left(X_1\right)=-\frac{r_2}{r_1}X_1 \qquad \text{and}\qquad A\left(X_2\right)=\frac{r_1}{r_2} X_2,
$$
for any $X_1\in C\left(T\mathbb{S}^{m_1}\left(r_1\right)\right)$ and $X_2\in C\left(T\mathbb{S}^{m_2}\left(r_2\right)\right)$.
We recall that
\begin{align*}
H&=\frac{1}{m}\left(-\frac{r_2}{r_1}m_1+\frac{r_1}{r_2}m_2\right)\eta, \\
\Ric\left(X_i\right)&=\frac{1}{r_i^2}\left(m_i-1\right)X_i, \qquad \forall X_i\in C\left(T\mathbb{S}^{m_i}\left(r_i\right)\right), i\in\{1,2\} 
\end{align*}
and
\begin{align*}
\Scal=\frac{1}{r_1^2}m_1\left(m_1-1\right)+\frac{1}{r_2^2}m_2\left(m_2-1\right).
\end{align*}
By standard computations, we get 
\begin{align*}
\tau(\iota)=& \left(-\frac{r_2}{r_1}m_1+\frac{r_1}{r_2}m_2\right)\eta,\\
\tau_2(\iota)=&\left(-\frac{r_2}{r_1}m_1+\frac{r_1}{r_2}m_2\right)
\left(\left(1-\frac{r_2^2}{r_1^2}\right)m_1+\left(1-\frac{r_1^2}{r_2^2}\right)m_2\right)\eta
\end{align*}
and
\begin{align*}
\langle A,\Ric\rangle = -\frac{r_2}{r_1^3}m_1\left(m_1-1\right)+\frac{r_1}{r_2^3}m_2\left(m_2-1\right).
\end{align*}
From \eqref{eq-c-biharmonic-intro} or \eqref{tau2c-spaceforms}, we get the formula
for the conformal bitension field
\begin{align}\label{eq:tau2c-clifford}
\tau_2^c(\iota) = &\left(  \left(\frac{r_2}{r_1}m_1-\frac{r_1}{r_2}m_2\right)\left(\left(r_2^2-r_1^2\right)\left(\frac{1}{r_1^2}m_1-\frac{1}{r_2^2}m_2\right)
+\frac{2}{3}\sum_{i=1}^2\frac{1}{r_i^2}(m_i-1)(m_i-3)\right)\right. \nonumber \\
&\quad \left.-\frac{2}{r_1r_2}\left(m_1-m_2\right)\vphantom{\frac{2}{3}\sum_{i=1}^2\frac{1}{r_i^2}(m_i-1)(m_i-3)}\right) \eta.
\end{align}

First, we consider the simplest case, i.e., when $m_1=m_2$.

\begin{Prop}
\label{thm:c-biharmonic-clifford}
The generalized Clifford torus $\s^{m/2}\left(r_1\right)\times \s^{m/2}\left(r_2\right)$, where 
$m$ is even and $r_1^2+r_2^2=1$, is c-biharmonic in $\s^{m+1}$ if and only if
\begin{itemize}
	\item [i)]  $r_1^2=r_2^2=1/2$, i.e., $\s^{m/2}(r_1)\times S^{m/2}(r_2)$ is
	minimal in $\s^{m+1}$;
	\item [ii)] $m=4$ and $r_1^2=\frac{1}{2}\left(1-\frac{1}{\sqrt{3}}\right)$, $r_2^2=\frac{1}{2}\left(1+\frac{1}{\sqrt{3}}\right)$.
\end{itemize}
\end{Prop}

\begin{proof}
In the case $m_1=m_2$, using \eqref{eq:tau2c-clifford}, equation $\tau_2^c(\iota)=0$ simplifies to
\begin{align*}
	\left(r_2^2-r_1^2\right)
	\left(\left(r_2^2-r_1^2\right)^2m_1
	+\frac{2}{3}\left(m_1-1\right)\left(m_1-3\right)
	\right)=0.
\end{align*}
Taking into account the constraint $r_1^2+r_2^2=1$, a solution of the above equation is
\begin{align*}
	r_1=r_2=\frac{1}{\sqrt{2}},
\end{align*}
which corresponds to the minimal generalized Clifford torus. Further, we note that the equation  
\begin{align*}
\left(r_2^2-r_1^2\right)^2m_1+\frac{2}{3}\left(m_1-1\right)\left(m_1-3\right)=0
\end{align*}
forces
\begin{align*}
\left(m_1-1\right)\left(m_1-3\right)<0,
\end{align*}
which can be satisfied only if $m_1=2$, i.e., $m=4$. Therefore, we get the system
\begin{align*}
	r_2^2-r_1^2=\pm\frac{1}{\sqrt{3}}\qquad \text{and} \qquad r_1^2+r_2^2=1
\end{align*}
and obtain the non-minimal solution of $\tau_2^c(\iota)=0$.
\end{proof}

Second, if we consider $m_1\neq m_2$ and $r_1=r_2=1/\sqrt{2}$, by a direct computation or from Proposition \ref{c-biharmonic-biharmonic}, we get

\begin{Prop}
The generalized Clifford torus $\s^{m_1}\left(1/\sqrt{2}\right)\times \s^{m_2}\left(1/\sqrt{2}\right)$, where $m_1\neq m_2$, $m_1+m_2=m$, is c-biharmonic in $\s^{m+1}$ if and only if
\begin{itemize}
\item [i)] $m_1=1$ and $m_2=4$, and thus $m=5$;
\item [ii)] $m_1=3$ and $m_2=4$, and thus $m=7$.
\end{itemize}
\end{Prop}

In fact, we can give the following complete classification of c-biharmonic generalized Clifford tori.
 
\begin{Theorem}
\label{thm:c-biharmonic-clifford-complete}
The generalized Clifford torus $\s^{m_1}\left(r_1\right)\times \s^{m_2}\left(r_2\right)$, where $m_1+m_2=m$ and $r_1^2+r_2^2=1$, is c-biharmonic in $\s^{m+1}$ if and only if one of the following cases holds
\begin{itemize}
	\item[i)] $m_1=m_2=2$ and either 
	$$
	r_1^2=r_2^2=\frac{1}{2}, \qquad\text{or}\qquad r_1^2=\frac{1}{2}\left(1-\frac{1}{\sqrt{3}}\right),\  r_2^2=\frac{1}{2}\left(1+\frac{1}{\sqrt{3}}\right).
	$$
	\item[ii)] $m_1\neq 2$ or $m_2\neq 2$ and 
	$$
	r_1^2=\frac{T_\ast}{1+T_\ast}, \ r_2^2=\frac{1}{1+T_\ast},
	$$
	where $T_\ast$ is the unique positive solution of the polynomial equation
	\begin{equation}\label{pol-grad3}
		a_3T^3 + a_2T^2+a_1T+a_0=0,
	\end{equation}
	with the coefficients given by
	\begin{equation*}
		\left\{
		\begin{array}{l}
			a_0=m_1\left(2m_1^2-5m_1+6\right)\\\\
			a_1=m_1\left(\left(m_1-m_2\right)^2+\left(m_1-\frac{11}{2}\right)^2+\left(m_2-3\right)^2-\frac{133}{4}\right)\\\\
			a_2=-m_2\left(\left(m_2-m_1\right)^2+\left(m_2-\frac{11}{2}\right)^2+\left(m_1-3\right)^2-\frac{133}{4}\right)\\\\
			a_3=-m_2\left(2m_2^2-5m_2+6\right)
		\end{array}
		\right..
	\end{equation*}	
\end{itemize}
\end{Theorem}

\begin{proof}
First, we can rewrite \eqref{eq:tau2c-clifford} as
\begin{align*}
	\tau_2^c(\iota) = &\left(  \left(\frac{r_2}{r_1}m_1-\frac{r_1}{r_2}m_2\right)\left(\left(\frac{r_2^2}{r_1^2}-1\right)m_1+\left(\frac{r_1^2}{r_2^2}-1\right)m_2
	+\frac{2}{3}\sum_{i=1}^2\frac{1}{r_i^2}(m_i-1)(m_i-3)\right)\right. \nonumber \\
	&\quad \left.-\frac{2}{r_1r_2}\left(m_1-m_2\right)\vphantom{\frac{2}{3}\sum_{i=1}^2\frac{1}{r_i^2}(m_i-1)(m_i-3)}\right) \eta.
\end{align*}
Since $r_1^2+r_2^2=1$, if we denote by $\alpha=r_1/r_2>0$, we have
$$
\frac{1}{r_1^2}=1+\frac{1}{\alpha^2}, \qquad \frac{1}{r_2^2}=1+\alpha^2, \qquad \frac{1}{r_1r_2}=\alpha+\frac{1}{\alpha}
$$
and
\begin{align*}
	\tau_2^c(\iota) = &\left( \left(\frac{1}{\alpha}m_1-\alpha m_2\right)\left(\left(\frac{1}{\alpha^2}-1\right)m_1+\left(\alpha^2-1\right)m_2 \right.\right. \nonumber\\
	 &\quad \left.\left. +\frac{2}{3}\left(\left(m_1-1\right)\left(m_1-3\right)\left(1+\frac{1}{\alpha^2}\right)+\left(m_2-1\right)
	\left(m_2-3\right)\left(1+\alpha^2\right)\right)\right)\right. \nonumber \\
	&\quad \left. -2\left(m_1-m_2\right)\left(\alpha+\frac{1}{\alpha}\right)\right) \eta.
\end{align*}
Imposing $\tau_2^c(\iota)=0$ and denoting $T=\alpha^2$, by some algebraic computations we achieve the polynomial equation \eqref{pol-grad3}.

It is not difficult to check that $a_0>0$, $a_1\neq 0$, $a_2\neq 0$ and  $a_3<0$ for any positive integers $m_1$, $m_2$. Using Descartes' rule of signs, we conclude that there always exists at least one positive solution of \eqref{pol-grad3}. More precisely, according to the signs of $a_1$ and $a_2$, we have exactly one positive solution in all cases, with the exception when $a_1<0$ and $a_2>0$. In this case, one can have $1$ or $3$ positive solutions (counting the multiplicities) of \eqref{pol-grad3}.

Further, we can prove that the conditions $a_1<0$ and $a_2>0$ imply $m_1$, $m_2\leq 8$. Now, using a computer algebra system, we consider all possibilities for $m_1$ and $m_2$, and we see that the only case when there exist $3$ positive solutions is $m_1=m_2=2$. In this case they are
$$
T_\ast= 1, \qquad T_\ast=2-\sqrt{3}, \qquad T_\ast=2+\sqrt{3}.
$$
Thus, we obtain
$$
r_1^2=\frac{1}{2}, \qquad r_1^2=\frac{1}{2}\left(1-\frac{1}{\sqrt{3}}\right),\qquad  r_1^2=\frac{1}{2}\left(1+\frac{1}{\sqrt{3}}\right).
$$
Since the classification is done up to isometries of the ambient sphere, we conclude.
\end{proof}

\subsection{c-biharmonic hypersurfaces in hyperbolic spaces}
For the hyperbolic space of curvature $c=-1$ we consider the one-sheet hyperboloid model lying  in the Minkowski space $\mathbb{R}^{n,1}$. More precisely, in $\mathbb{R}^{n+1}$, $n\geq 2$, we define the inner product
$$
\langle X, Y\rangle:=\sum_{i=1}^{n}X^iY^i-X^{n+1}Y^{n+1},
$$
where $X=\left(X^1,X^2,\dots, X^{n+1}\right)$ and $Y=\left(Y^1,Y^2,\dots, Y^{n+1}\right)$ are vectors in $\mathbb{R}^{n+1}$. The hyperbolic space is defined by
$$
\mathbb{H}^n=\left\{\bar{x}\in\mathbb{R}^{n+1}\ | \ \langle\bar{x},\bar{x}\rangle=-1\quad \text{and} \quad x^{n+1}>0\right\}.
$$
The geometry of hypersurfaces in hyperbolic space was first discussed by Ryan in \cite{MR0253243}, see also the correction in \cite{MR1940572}.

The hypersurfaces $M^m$ of $\mathbb{H}^{m+1}$, $m\geq 2$, with $1$ or $2$ constant distinct principal curvatures, are the following
\begin{itemize}
	\item [i)] $M^m=\left\{\bar{x}\in\mathbb{H}^{m+1}\ | \ x^{1}=r\geq 0\right\}$. As the unit section in the normal bundle of $M^m$ in $\mathbb{H}^{m+1}$ we can choose
	$$
	\eta=\left(\sqrt{1+r^2},\frac{r}{\sqrt{1+r^2}}\left(x^2, \dots, x^{m+2}\right)\right).
	$$ 
	We have
	$$
	 A =- \frac{r}{\sqrt{1+r^2}} \Id, \quad H= -\frac{r}{\sqrt{1+r^2}}\eta, \quad \Ric=-\frac{m-1}{1+r^2}\Id, \quad \Scal=-\frac{(m-1)m}{1+r^2};
	$$
	$M^m$ is isometric to the hyperbolic space of curvature $-1/\left(1+r^2\right)$.
	
	\item [ii)] $M^m=\left\{\bar{x}\in\mathbb{H}^{m+1}\ | \ x^{m+2}=x^{m+1}+a,\  a>0\right\}$. As the unit section in the normal bundle of $M^m$ in $\mathbb{H}^{m+1}$ we can choose
	$$
	\eta=\left(x^1, \dots, x^{m}, x^{m+1}-\frac{1}{a}, x^{m+1}+a-\frac{1}{a}\right).
	$$ 
	We have
	$$
	A=-\Id, \quad H= -\eta, \quad \Ric=0, \quad \Scal=0; 
	$$
	$M^m$ is isometric to the Euclidean space $\mathbb{R}^{m}$.
	
	\item[iii)] $M^m=\left\{\bar{x}\in\mathbb{H}^{m+1}\ | \ \sum_{i=1}^{m+1}\left(x^{i}\right)^2=r^2,\  r>0\right\}$. As the unit section in the normal bundle of $M^m$ in $\mathbb{H}^{m+1}$ we can choose
	$$
	\eta=\frac{\sqrt{1+r^2}}{r}\left(x^1, \dots, x^{m+1},\frac{r^2}{\sqrt{1+r^2}}\right).
	$$ 
	We have
	$$
	A= -\frac{\sqrt{1+r^2}}{r}\Id, \quad H=-\frac{\sqrt{1+r^2}}{r}\eta, \quad \Ric=\frac{m-1}{r^2}\Id, \quad \Scal=\frac{(m-1)m}{r^2}; 
	$$
	$M^m$ is isometric to the Euclidean sphere of radius $r$.
	
	\item[iv)] 
	\begin{align*}
	M^m_k=\left\{\left(\bar{x}_1,\bar{x}_2\right)\in\mathbb{H}^{m+1}\ \right.|&\left. \ \left|\bar{x}_1\right|^2= \sum_{i=1}^{k+1}\left(x^{i}\right)^2=r^2,\ 
	\left|\bar{x}_2\right|^2=\sum_{i=k+2}^{m+1}\left(x^{i}\right)^2 - \left(x^{m+2}\right)^2=-\left(1+r^2\right),\right. \\
	&\left.  r>0, 0\leq k\leq m-1\right\}.
	\end{align*}
	As the unit section in the normal bundle of $M^m_k$ in $\mathbb{H}^{m+1}$ we can choose
	$$
	\eta=\left(\frac{\sqrt{1+r^2}}{r}\bar{x}_1, \frac{r}{\sqrt{1+r^2}}\bar{x}_2\right)
	$$
	and therefore
	$$
	A\left(X_1\right)=- \frac{\sqrt{1+r^2}}{r}X_1 \qquad \text{and} \qquad  A\left(X_2\right)=- \frac{r}{\sqrt{1+r^2}}X_2,
	$$
	where $X_1$ is a vector field tangent to the $k$-dimensional Euclidean sphere of radius $r$ and $X_2$ is a vector field tangent to the $(m-k)$-dimensional hyperbolic space with sectional curvature equal to $-1/\left(1+r^2\right)$.
	
	We have
	$$
	H=-\frac{k+r^2m}{mr\sqrt{1+r^2}}\eta, \quad \Ric\left(X_1\right)=\frac{k-1}{r^2}X_1, \quad \Ric\left(X_2\right)=-\frac{m-k-1}{1+r^2}X_2
	$$
	and
	$$
	\Scal=\frac{k(k-1)}{r^2}-\frac{(m-k)(m-k-1)}{1+r^2}.
	$$
\end{itemize}

Further, we compute $\tau_2^c(\iota)$ for the hypersurfaces $M^m$ of $\mathbb{H}^{m+1}$ described above.

First, we consider the canonical inclusion 
$$
\iota: M^m=\left\{\bar{x}\in\mathbb{H}^{m+1}\ | \ x^{1}=r\geq 0\right\}\to\mathbb{H}^{m+1}
$$ 
and by a direct computation we obtain
$$
\tau(\iota)=-\frac{mr}{\sqrt{1+r^2}}\eta, \qquad \tau_2(\iota)=\frac{m^2r\left(1+2r^2\right)}{\left(1+r^2\right)^{3/2}}\eta
$$
such that
$$
\tau_2^c(\iota)=\frac{mr\left(6mr^2-2m^2+11m-6\right)}{3\left(1+r^2\right)^{3/2}}\eta.
$$

Thus, we can state the following result which agrees with Theorem \ref{c-biharmonic-umbilical}.

\begin{Theorem}\label{prop-c-biharmonic-equidistant}
The hyperbolic hypersurface $M^m=\left\{\bar{x}\in\mathbb{H}^{m+1}\ | \ x^{1}=r\geq 0\right\}$ is c-biharmonic in $\mathbb{H}^{m+1}$ if and only if 
\begin{itemize}
	\item[i)]  $r=0$, i.e., $M^m$ is totally geodesic in $\mathbb{H}^{m+1}$;
	\item[ii)] $m\geq 5$ and
	$$
	r^2=\frac{2m^2-11m+6}{6m}.
	$$ 
\end{itemize} 
\end{Theorem}

Second, we consider the canonical inclusion
$$
\iota: M^m=\left\{\bar{x}\in\mathbb{H}^{m+1}\ | \ x^{m+2}=x^{m+1}+a,\  a>0\right\}\to \mathbb{H}^{m+1},
$$
and we have
$$
\tau(\iota)= -m\eta, \qquad \tau_2(\iota)=2m^2\eta,\qquad \tau_2^c(\iota)=2m^2\eta\neq 0, \quad  m\geq 2.
$$
Therefore, we get
\begin{Prop}
The flat hypersurface $M^m=\left\{\bar{x}\in\mathbb{H}^{m+1}\ | \ x^{m+2}=x^{m+1}+a,\  a>0\right\}$ cannot be c-biharmonic in $\mathbb{H}^{m+1}$.
\end{Prop}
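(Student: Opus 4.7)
The plan is to observe that the statement is essentially an immediate consequence of the computation performed just above the proposition, so the only task is to assemble the ingredients and note the non-vanishing. First I would record the geometric data extracted from the list of hypersurfaces with $\mathcal{l}=1$ or $\mathcal{l}=2$: for the horosphere $M^m=\{x^{m+2}=x^{m+1}+a\}$ one has $A=-\Id$, so $f=-1$ is constant and $|A|^2=m$, while $\Ric=0$ and $\Scal=0$ because $M^m$ is isometric to the flat Euclidean space $\mathbb{R}^m$.

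Next, I would invoke the Ricci-flat case of Proposition \ref{prop-energies-equal} (iii), together with the subsequent remark covering the non-compact setting, to conclude that $\tau_2^c(\iota)=\tau_2(\iota)$ in this situation; alternatively, one may simply plug the above data directly into the CMC formula \eqref{tau2c-spaceforms} with $c=-1$. Since $f$ is constant, the terms involving $\nabla f$, $\nabla\Scal$, $\Scal\, f$ and $\langle A,\Ric\rangle$ all drop out, and what survives is
\begin{equation*}
\tfrac{1}{m}\tau_2^c(\iota)=\bigl(-f|A|^2+mcf\bigr)\eta=\bigl(m+m\bigr)\eta=2m\,\eta,
\end{equation*}
so $\tau_2^c(\iota)=2m^2\eta$, exactly the expression already displayed above the proposition.

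Finally, I would conclude by noting that $2m^2\neq 0$ for every $m\geq 2$, so $\tau_2^c(\iota)$ is a nonzero normal vector field along $M^m$ and the horosphere cannot be c-biharmonic. There is no real obstacle in this argument: the key observation is that Ricci-flatness of $M^m$ collapses $\tau_2^c$ to $\tau_2$, and both the tension and the bitension field are very simple because $A$ is a constant multiple of the identity. No integration, no variational analysis, and no choice of frame is required beyond the one implicit in writing $A=-\Id$.
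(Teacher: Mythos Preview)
Your argument is correct and is essentially the paper's own: you read off the geometric data of the horosphere, use Ricci-flatness (so that $\tau_2^c=\tau_2$) and the CMC formula \eqref{tau2c-spaceforms} to obtain $\tau_2^c(\iota)=2m^2\eta\neq 0$, exactly as recorded immediately before the proposition.
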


For the third case, if we consider the canonical inclusion
$$
\iota: M^m=\left\{\bar{x}\in\mathbb{H}^{m+1}\ | \ \sum_{i=1}^{m+1}\left(x^{i}\right)^2=r^2,\  r>0\right\}\to \mathbb{H}^{m+1},
$$
we have
$$
\tau(\iota)= -\frac{m\sqrt{1+r^2}}{r}\eta, \qquad \tau_2(\iota)=\frac{m^2\sqrt{1+r^2}\left(1+2r^2\right)}{r^3}\eta,
$$
and
$$
\tau_2^c(\iota)=\frac{m\sqrt{1+r^2}\left(6mr^2+2m^2-5m+6\right)}{3r^3}\eta\neq 0, \quad m\geq 2.
$$
Thus, it follows
\begin{Prop}
The spherical hypersurface $M^m=\left\{\bar{x}\in\mathbb{H}^{m+1}\ | \ \sum_{i=1}^{m+1}\left(x^{i}\right)^2=r^2,\  r>0\right\}$ cannot be c-biharmonic in $\mathbb{H}^{m+1}$.
\end{Prop}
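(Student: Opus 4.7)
The proof should be a short verification using the formula for $\tau_2^c(\iota)$ displayed immediately before the statement, namely
$$
\tau_2^c(\iota)=\frac{m\sqrt{1+r^2}\left(6mr^2+2m^2-5m+6\right)}{3r^3}\eta.
$$
Since $\eta$ is a unit normal field and in particular nowhere zero, the vanishing of $\tau_2^c(\iota)$ is equivalent to the vanishing of the scalar coefficient. Under the standing assumptions $m\geq 2$ and $r>0$, the factors $m$, $\sqrt{1+r^2}$ and $r^3$ are all strictly positive, so the plan reduces to showing that
$$
P(m,r):=6mr^2+2m^2-5m+6
$$
is strictly positive for every $m\geq 2$ and every $r>0$.

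The key observation is that the $r$-independent part $Q(m):=2m^2-5m+6$ is already positive: as a quadratic in $m$ its discriminant is $25-48=-23<0$ and its leading coefficient is positive, hence $Q(m)>0$ for every real $m$. Adding the nonnegative term $6mr^2$ (which is actually strictly positive for $m\geq 2$, $r>0$) then yields $P(m,r)>0$. Consequently $\tau_2^c(\iota)\neq 0$ identically on $M^m$, and in particular $M^m$ cannot be c-biharmonic.

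There is essentially no obstacle here; the computation of $\tau(\iota)$, $\tau_2(\iota)$ and $\tau_2^c(\iota)$ carried out just above the statement does all the geometric work, and the only remaining task is the elementary positivity check for $Q(m)$. It is worth emphasising the contrast with the equidistant hypersurface case of Proposition~\ref{prop-c-biharmonic-equidistant}, where the analogous scalar factor was $6mr^2-2m^2+11m-6$ and could be made to vanish by a suitable choice of $r$ for $m\geq 5$; here the signs combine so that the expression is positive definite in $(m,r)$, ruling out every admissible radius.
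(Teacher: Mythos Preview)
Your proposal is correct and matches the paper's approach exactly: the paper records the formula for $\tau_2^c(\iota)$, notes it is nonzero for $m\geq 2$, and deduces the proposition without further comment. Your discriminant check for $2m^2-5m+6$ makes explicit the one step the paper leaves implicit.
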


For the last case, if we consider the canonical inclusion
\begin{align*}
\iota: M^m_k=\left\{\left(\bar{x}_1,\bar{x}_2\right)\in\mathbb{H}^{m+1}\ \right.|&\left. \ \left|\bar{x}_1\right|^2= \sum_{i=1}^{k+1}\left(x^{i}\right)^2=r^2,\ \left|\bar{x}_2\right|^2=\sum_{i=k+2}^{m+1}\left(x^{i}\right)^2 - \left(x^{m+2}\right)^2=-\left(1+r^2\right),\right. \\
&\left.  r>0, 0\leq k\leq m-1\right\}\to\mathbb{H}^{m+1},
\end{align*}
we have
$$
\tau(\iota)=-\frac{k+r^2m}{r\sqrt{1+r^2}}\eta, \qquad \tau_2(\iota)=\frac{k+r^2m}{r\sqrt{1+r^2}}\left(\frac{k\left(1+r^2\right)}{r^2}+\frac{(m-k)r^2}{1+r^2}+m\right)\eta
$$
and
\begin{align}\label{eq:tau2c-case4}
\tau_2^c(\iota) =  \frac{1}{3r^3\left(1+r^2\right)^{3/2}} & \left(  6m^2r^6+\left(-2m^3+11m^2-6m+4k\left(m^2-m+3\right)\right)r^4 \right.\nonumber\\
 &\quad + \left. 2k\left(k\left(3m-5\right)-m^2+3m+6\right)r^2+2k^3-5k^2+6k\right) \eta.
\end{align}
We note that when $k=0$, we reobtain Theorem \ref{prop-c-biharmonic-equidistant}. Thus, we assume $k>0$.

From \eqref{eq:tau2c-case4} it is easy to state the following characterization result.

\begin{Prop}\label{characterization-hyperbolic}
The hypersurface 
\begin{align*}
	M^m_k=\left\{\left(\bar{x}_1,\bar{x}_2\right)\in\mathbb{H}^{m+1}\ \right.|&\left. \ \left|\bar{x}_1\right|^2= \sum_{i=1}^{k+1}\left(x^{i}\right)^2=r^2,\ \left|\bar{x}_2\right|^2=\sum_{i=k+2}^{m+1}\left(x^{i}\right)^2 - \left(x^{m+2}\right)^2=-\left(1+r^2\right),\right. \\
	&\left.  r>0,0<k\leq m-1\right\}
\end{align*}
is c-biharmonic if and only if $r^2=T_\ast$ is a positive solution of the polynomial equation 
\begin{equation}\label{polynomial-equation-hyperbolic}
\zeta(T)=a_3T^3+a_2T^2+a_1T+a_0=0,
\end{equation}
with the coefficients given by
\begin{equation*}
	\left\{
	\begin{array}{l}
		a_0=2k^3-5k^2+6k\\\\
		a_1=2k\left(k\left(3m-5\right)-m^2+3m+6\right)\\\\
		a_2=-2m^3+11m^2-6m+4k\left(m^2-m+3\right)\\\\
		a_3=6m^2
	\end{array}
	\right..
\end{equation*}
\end{Prop}

\begin{Prop}\label{non-existence}
The hypersurface 
\begin{align*}
M^m_k=\left\{\left(\bar{x}_1,\bar{x}_2\right)\in\mathbb{H}^{m+1}\ \right.|&\left. \ \left|\bar{x}_1\right|^2= \sum_{i=1}^{k+1}\left(x^{i}\right)^2=r^2,\ \left|\bar{x}_2\right|^2=\sum_{i=k+2}^{m+1}\left(x^{i}\right)^2 - \left(x^{m+2}\right)^2=-\left(1+r^2\right),\right. \\
&\left.  r>0,0<k\leq m-1\right\}
\end{align*}
cannot be c-biharmonic in $\mathbb{H}^{m+1}$ if
\begin{itemize}
	\item[i)] $m\in\left\{2,3,10,11, 12,...\right\}$ and 
	$$
	 \frac{m\left(m^2-11m+6\right)}{4\left(m^2-m+3\right)}\leq k;
	$$
	\item[ii)] $m\in\left\{4,5,6,7,8,9\right\}$ and
	$$
	\frac{m^2-3m-6}{3m-5} \leq k.
	$$
\end{itemize}

\end{Prop}

\begin{proof}

From Proposition \ref{characterization-hyperbolic} we have that $M^m_k$ is c-biharmonic in $\mathbb{H}^{m+1}$ if and only if the polynomial equation \eqref{polynomial-equation-hyperbolic} has positive solutions. We note that $a_0>0$ and $a_3>0$, for any $m\geq 2$ and $0<k\leq m-1$. If $a_1\geq 0$ and $a_2\geq 0$, it is clear that \eqref{polynomial-equation-hyperbolic} does not have positive solutions. Now, by some standard computations we conclude.
\end{proof}

\begin{Bem}
Using a computer algebra system, one can see that for $m$ large enough, 
$$
	\frac{m\left(m^2-11m+6\right)}{4\left(m^2-m+3\right)}\in \left(\frac{m}{3}, \frac{m}{2}\right).
$$
\end{Bem}

When $m\leq 7$, we have the following non-existence result.

\begin{Theorem}
The hypersurface 
\begin{align*}
	M^m_k=\left\{\left(\bar{x}_1,\bar{x}_2\right)\in\mathbb{H}^{m+1}\ \right.|&\left. \ \left|\bar{x}_1\right|^2= \sum_{i=1}^{k+1}\left(x^{i}\right)^2=r^2,\ \left|\bar{x}_2\right|^2=\sum_{i=k+2}^{m+1}\left(x^{i}\right)^2 - \left(x^{m+2}\right)^2=-\left(1+r^2\right),\right. \\
	&\left. r>0,0<k\leq m-1\right\}
\end{align*}
cannot be c-biharmonic in $\mathbb{H}^{m+1}$ if $m\leq 7$. 
\end{Theorem}
\begin{proof}
For $m\leq 6$, and for any $k\leq m-1$, the conclusion follows directly from Proposition \ref{non-existence}. Using a computer algebra system, we can check that for $m=7$, also the hypersurface $M^7_k$ cannot be c-biharmonic.
\end{proof}

On the contrary, for $m\geq 8$, we do have examples of c-biharmonic hypersurfaces $M^m_k$ in $\mathbb{H}^{m+1}$.

\begin{Theorem}
If $k=1$, then the hypersurface 
\begin{align*}
	M^m_{1}=\left\{\left(\bar{x}_1,\bar{x}_2\right)\in\mathbb{H}^{m+1}\ \right.|&\left. \ \left|\bar{x}_1\right|^2= \sum_{i=1}^{2}\left(x^{i}\right)^2=r^2,\ \left|\bar{x}_2\right|^2=\sum_{i=3}^{m+1}\left(x^{i}\right)^2 - \left(x^{m+2}\right)^2=-\left(1+r^2\right),\right. \\
    &\left. r>0 \right\}
\end{align*}
is c-biharmonic in $\mathbb{H}^{m+1}$ if and only if $m\geq 8$. In this case, $r^2$ is one of the two positive solutions of the polynomial equation \eqref{polynomial-equation-hyperbolic}. Moreover, for $m=8$ and $2\leq k$, $M^m_k$ cannot be c-biharmonic in $\mathbb{H}^{9}$.
\end{Theorem}

\begin{proof}
Since $a_0>0$ and $a_3>0$, from Descartes' rule of signs, we see that the polynomial equation \eqref{polynomial-equation-hyperbolic} has exactly $2$ or $0$ positive solutions. 

We can check that $\zeta\left(1/m\right)<0$, for any $m\geq 8$, and as $\zeta(0)=a_0>0$ and $\zeta(T)$ goes to infinity when $T$ goes to infinity, \eqref{polynomial-equation-hyperbolic} has two positive solutions, one in $\left(0,1/m\right)$ and the other one in $\left(1/m,\infty\right)$. 

For the last part of the proof, one can verify that, when $m=8$, the lower bound for $k$ given in Proposition \ref{non-existence} satisfies
$$
\frac{m^2-3m-6}{3m-5}\in (1,2),
$$
so we conclude.
\end{proof}

We can enlarge the range for $k$ such that $M^m_k$ becomes c-biharmonic in $\mathbb{H}^{m+1}$.

\begin{Prop}
If $m\geq 8$ and $1\leq k<k_m$, then $M^m_k$ is c-biharmonic in $\mathbb{H}^{m+1}$ if and only if $r^2$ is one of the two positive solutions of the polynomial equation \eqref{polynomial-equation-hyperbolic}, where
$$
k_m=\frac{2m^3-6m^2-3m+m\sqrt{4m^4-24m^3-48m^2+84m-63}}{4\left(3m^2-2m+3\right)}.
$$
\end{Prop}

\begin{proof}
As in the previous proof, we can check that
$$
\zeta\left(\frac{k}{m}\right)= \frac{2k\left(2k^2\left(3m^2-2m+3\right)+k\left(-2m^3+6m^2+3m\right)+3m^2\right)}{m^2}
$$
is negative for any $1\leq k<k_m$, and for any $m\geq 8$. Thus, we conclude.
\end{proof}

\begin{Bem}
Using a computer algebra system, one can see that for $m$ large enough, 
$$
k_m\in \left(\frac{m}{4},\frac{m}{3}\right).
$$
\end{Bem}

\begin{Bem}
The case $m=1$ does not represent a singularity for the c-biharmonic equation, but it provides no solution; this observation agrees with Proposition \ref{prop-energies-equal}, i).
\end{Bem}

\section{The stability of c-biharmonic hyperspheres}

Since we are going to compute the index and nullity for  the c-biharmonic maps given in Theorem \ref{thm:inclusion-sphere}, we consider from the beginning only the case $M^m=\mathbb{S}^{m}(r)$ and $N^n=\mathbb{S}^{n}$.

Let $\phi:\mathbb{S}^{m}(r)\to\mathbb{S}^{n}$ be a smooth map. We consider $\left\{\phi_{s,t}\right\}_{s,t\in\mathbb{R}}$ a two parameter variation of $\phi$, i.e., we consider a smooth map $\Phi:\mathbb{R}\times\mathbb{R}\times \mathbb{S}^{m}(r)\to \mathbb{S}^{n}$, $\Phi(s,t,p)=\phi_{s,t}(p)$, such that 
$$
\Phi(0,0,p)=\phi_{0,0}(p)=\phi(p),\qquad \forall p\in \mathbb{S}^{m}(r).
$$
We set $V$, $W\in C\left(\phi^{-1}T\mathbb{S}^n\right)$ as
$$
V(p)=\left.\frac{d}{ds}\right|_{s=0}\left\{\phi_{s,0}(p)\right\}=d\Phi_{(0,0,p)}\left(\frac{\partial}{\partial s}\right)\in T_{\phi(p)}\mathbb{S}^n
$$
and
$$
W(p)=\left.\frac{d}{dt}\right|_{t=0}\left\{\phi_{0,t}(p)\right\}=d\Phi_{(0,0,p)}\left(\frac{\partial}{\partial t}\right)\in T_{\phi(p)}\mathbb{S}^n.
$$
The following formulas for the second variation of the energy and bienergy functionals are well-known (see \cite{MR886529} for the bienergy, see \cite[Chapter 5]{MR1252178} for the Jacobi operator associated with harmonic maps)
\begin{equation*}\label{energy-functional-2-derivative*}
\left.\frac{\partial^2}{\partial s\partial t}\right|_{(s,t)=(0,0)}\left\{E\left(\phi_{s,t}\right)\right\} =\int_{\mathbb{S}^{m}(r)} \left(\langle W, J(V)\rangle-\langle \nabla^{\Phi}_{\frac{\partial}{\partial s}}d\Phi\left(\frac{\partial}{\partial t}\right),\tau(\phi)\rangle\right)\ v_g,		
\end{equation*}
where 
\begin{equation*}\label{eq:J}
J(V)=\bar\Delta V+\tr R^N\left(d\phi(\cdot),V\right)d\phi(\cdot),
\end{equation*}
and
\begin{equation*}\label{bienergy-functional-2-derivative*}
\left.\frac{\partial^2}{\partial s\partial t}\right|_{(s,t)=(0,0)}\left\{E_2\left(\phi_{s,t}\right)\right\} =\int_{\mathbb{S}^{m}(r)} \left(\langle W, J_2(V)\rangle+\langle \nabla^{\Phi}_{\frac{\partial}{\partial s}}d\Phi\left(\frac{\partial}{\partial t}\right),\tau_2(\phi)\rangle\right)\ v_g,		
\end{equation*}
where 
\begin{align*}\label{eq:J2}
J_2(V) = &\bar\Delta\bar\Delta V+\bar\Delta\left(\tr \langle V,d\phi(\cdot)\rangle d\phi(\cdot)-|d\phi|^2 V \right)+2\langle d\tau(\phi),d\phi\rangle V+|\tau(\phi)|^2V \\
& - 2\tr \langle V,d\tau(\phi)(\cdot)\rangle d\phi(\cdot)-2\tr\langle \tau(\phi),dV(\cdot)\rangle d\phi(\cdot)-\langle \tau(\phi),V\rangle \tau(\phi)+\tr\langle d\phi(\cdot),\bar{\Delta}V\rangle d\phi(\cdot)\\
& + \tr\langle d\phi(\cdot),\left(\tr\langle V,d\phi(\cdot)\rangle d\phi(\cdot)\right)\rangle d\phi(\cdot)-2|d\phi|^2\tr\langle d\phi(\cdot),V\rangle d\phi(\cdot)+2\langle dV,d\phi\rangle \tau(\phi)\\
& - |d\phi|^2\bar{\Delta}V+|d\phi|^4V,
\end{align*}
see for example \cite{MR1943720} for a detailed derivation.
Since in our case
$$
E^c_2\left(\phi_{s,t}\right)=\frac{1}{2}\int_{\mathbb{S}^{m}(r)}\left(\left|\tau\left(\phi_{s,t}\right)\right|^2+\frac{2}{3}\frac{(m-1)(m-3)}{r^2}\left|d\phi_{s,t}\right|^2\right) \ v_g,
$$
if the map $\phi$ is c-biharmonic we infer that
\begin{equation*}\label{c-bienergy-functional-2-derivative*}
\left.\frac{\partial^2}{\partial s\partial t}\right|_{(s,t)=(0,0)}\left\{E^c_2\left(\phi_{s,t}\right)\right\} =\int_{\mathbb{S}^{m}(r)} \langle W, J_2^c(V)\rangle \ v_g =: \left(W, J_2^c(V)\right),		
\end{equation*}
where
\begin{equation*}\label{eq:J2c}
	J_2^c(V)=J_2(V)+\frac{2}{3}\frac{(m-1)(m-3)}{r^2}J(V).
\end{equation*}
For $m=1$ or $m=3$, we note that the Hessians corresponding to $E_2^c$ and $E_2$ coincide. From the biharmonic map theory it is known that the index of the biharmonic hypersphere $\mathbb{S}^m\left(1/\sqrt{2}\right)$ in $\mathbb{S}^{m+1}$ is equal to one, for any $m\geq1$ (see \cite{MR2135286} and \cite{MR2187367}), and there is no stable compact proper biharmonic submanifold in $\mathbb{S}^n$. As we will see next, we do have examples of stable compact c-biharmonic submanifolds in $\mathbb{S}^n$. 

\subsection{The stability of the equator of $\mathbb{S}^{m+1}$}
Let us consider $\iota:\mathbb{S}^m\to\mathbb{S}^{m+1}$ the canonical inclusion of the equator of $\mathbb{S}^{m+1}$. We can choose the unit section in the normal bundle  $\eta=\bar{e}_{m+2}$ and obviously $A=A_\eta=0$. We have the decomposition
\begin{equation*}
C\left(\iota^{-1}T\mathbb{S}^{m+1}\right)  =  \left\{\alpha\eta \ |\ \alpha\in C^{\infty}\left(\mathbb{S}^m\right)\right\} \oplus  \left\{V\ |\ V\in C\left(T\mathbb{S}^m\right)\right\}\\
\end{equation*} 
and
\begin{align*}
\oplus_{j=0}^{\infty}\left\{\alpha\eta \ |\ \Delta\alpha=\lambda_j\alpha\right\} \oplus\oplus_{\mathcal{l}=0}^{\infty} \left\{V\in C\left(T\mathbb{S}^m\right) |\ \Delta_H(V)=\mu_{\mathcal{l}} V\right\}
\end{align*}
is dense in the space of the $L^2$-sections of $\phi^{-1}T\mathbb{S}^{m+1}$, where $\Delta_H$ is the Hodge Laplacian acting on $p$-forms of the domain. In particular, identifying $1$-forms with vector fields, we have
$$
\Delta_H(X)=-\tr\nabla^2 X+\Ric(X).
$$

First, we consider the case $V=\alpha\eta$, where $\Delta\alpha=\lambda\alpha$. By standard computations, we obtain
$$
J(V)=(\lambda-m)V, \qquad J_2(V)=(\lambda-m)^2 V
$$
and thus
$$
J^c_2(V)=(\lambda-m)\left(\lambda+\frac{2m^2-11m+6}{3}\right)V.
$$

Second, we consider $V\in C\left(T\mathbb{S}^m\right)$ with $\Delta_H(V)=\mu V$ and we get
$$
J(V)=\left(\mu-2m+2\right)V,\qquad J_2(V)= \left(\mu-2m+2\right)^2V
$$
and 
$$
J_2^c(V)=\left(\mu-2m+2\right)\left(\mu+\frac{2m^2-14m+12}{3}\right)V.
$$
We see that $J_2^c$ preserves the subspaces $C\left(N\mathbb{S}^m\right)$ and $C\left(T\mathbb{S}^m\right)$. Consequently, we compute the index and nullity of $\left.J_{2}^c\right|_{C\left(N\mathbb{S}^m\right)}$ and $\left.J_{2}^c\right|_{C\left(T\mathbb{S}^m\right)}$, respectively and then we sum them up. 

Recall that the eigenvalues of the Laplacian acting on $0$-forms of $\mathbb{S}^m$ are given by
$$
\lambda=\lambda_j=j(m+j-1), \qquad j\geq 0.
$$
Looking at the normal part and denoting 
$$
\gamma=(\lambda-m)\left(\lambda+\frac{2m^2-11m+6}{3}\right),
$$
we see that if $m\in\left\{1,2,3,4\right\}$, then $\gamma$ is positive for $j=0$ and any $j\geq 2$ and vanishes when $j=1$. If $m\geq 5$, then $\gamma$ is negative for $j=0$, positive for any $j\geq 2$ and vanishes for $j=1$. 

We conclude that, when $m\in\left\{1,2,3,4\right\}$, the index of $\left.J_{2}^c\right|_{C\left(N\mathbb{S}^m\right)}$ is $0$. When $m\geq 5$, the index of $\left.J_{2}^c\right|_{C\left(N\mathbb{S}^m\right)}$ is $1$ and $\left.J_{2}^c\right|_{C\left(N\mathbb{S}^m\right)}$ is negative definite on $\left\{a\eta \ |\ a\in \mathbb{R}\right\}$.

The nullity of $\left.J_{2}^c\right|_{C\left(N\mathbb{S}^m\right)}$ is $m+1$, for any $m\geq 1$, and $\left.J_{2}^c\right|_{C\left(N\mathbb{S}^m\right)}$ vanishes on $\left\{\alpha\eta \ |\ \Delta\alpha=\lambda_1\alpha \right\}$.

Concerning the tangent part, it is known that the eigenvalues of $\Delta_H$ are
$$
\left\{\mu_\mathcal{l}\right\}=\left\{ \lambda_j \ |\ j\geq 1 \right\}\cup \left\{ (k+1)(k+m-2) \ |\ k\geq 1 \right\}
$$
(see \cite{MR2531148, MR510492, MR533709}). 

The first set of eigenvalues corresponds to the restriction of $\Delta_H$ to the vector fields tangent to $\mathbb{S}^m$ of type $\nabla\alpha$, $\alpha$ being a smooth function on $\mathbb{S}^m$, and the second set corresponds to the restriction of $\Delta_H$ to the divergence free vector fields tangent to 
$\mathbb{S}^m$. When $m=2$, the two sets coincide. Moreover, we recall that if $\Div X=0$, then $\Delta_HX=2(m-1)X$ and $\Div X=0$ if and only if $X$ is Killing.

By direct checking, we obtain that, when $m\in\left\{1,2,3,4\right\}$, the index of $\left.J_{2}^c\right|_{C\left(T\mathbb{S}^m\right)}$ is $0$. When $m\geq 5$, the index of $\left.J_{2}^c\right|_{C\left(T\mathbb{S}^m\right)}$ is $m+1$ and $\left.J_{2}^c\right|_{C\left(T\mathbb{S}^m\right)}$ is negative definite on $\left\{\nabla\alpha \ |\ \Delta\alpha =\lambda_1\alpha \right\}$.

The nullity of $\left.J_{2}^c\right|_{C\left(T\mathbb{S}^m\right)}$ is $m(m+1)/2$, for any positive integer $m$ with the exception of $m=2$ and $m=4$. For $m\neq 2$ and $m\neq4$, the operator $\left.J_{2}^c\right|_{C\left(T\mathbb{S}^m\right)}$ vanishes on the set of all Killing vector fields of $\mathbb{S}^m$. When $m=2$ or $m=4$, the nullity of $\left.J_{2}^c\right|_{C\left(T\mathbb{S}^m\right)}$ is $m(m+1)/2 + (m+1)$ and $\left.J_{2}^c\right|_{C\left(T\mathbb{S}^m\right)}$ vanishes on the set of all Killing vector fields of $\mathbb{S}^m$ and on $\left\{\nabla \alpha \ |\ \Delta\alpha=\lambda_1\alpha \right\}$.

We can summarize the above results and we state

\begin{Theorem}\label{thm:stability-equator}
 We consider $\iota:\mathbb{S}^m\to\mathbb{S}^{m+1}$ the canonical inclusion of the totally geodesic hypersphere $\mathbb{S}^m$, thought of as a c-biharmonic map. Then, we have
  \begin{itemize}
	\item [i)] if $m\in\{1,2,3,4\}$, then the index of $\mathbb{S}^m$ is $0$, i.e., $\mathbb{S}^m$ is stable;
    \item[ii)] if $m\geq 5$, then the index of $\mathbb{S}^m$ is $m+2$.
  \end{itemize}
\end{Theorem}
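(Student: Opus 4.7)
The plan is to exploit the fact that for the totally geodesic equator one has $A_\eta=0$ and $\tau(\phi)=0$, so the Jacobi operator $J_2^c$ simplifies dramatically on the pullback bundle. My first step would be to verify that the orthogonal decomposition
$$
C(\phi^{-1}T\mathbb{S}^{m+1}) = \{\alpha\eta : \alpha\in C^\infty(\mathbb{S}^m)\}\oplus C(T\mathbb{S}^m)
$$
is preserved by $J_2^c$. Substituting $\tau(\phi)=0$, $|d\phi|^2=m$, and the constant-curvature identity $R^{\mathbb{S}^{m+1}}(X,Y)Z=\langle Y,Z\rangle X-\langle X,Z\rangle Y$ into \eqref{eq:intro-jacobi}, most of the twelve terms vanish or collapse: the normal direction becomes a scalar function of $\Delta$ acting on $\alpha$, while on a tangent section only the Hodge Laplacian $\Delta_H$ survives. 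Because $J_2^c$ is symmetric, its spectrum on $\phi^{-1}T\mathbb{S}^{m+1}$ is then the union of the spectra of the two restrictions, and $\operatorname{Index}(\phi)$ is the sum of the two indices.

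The second step is to diagonalise on each factor. On the normal side, if $\Delta\alpha=\lambda\alpha$ then
$$
J_2^c(\alpha\eta)=(\lambda-m)\Bigl(\lambda+\tfrac{2m^2-11m+6}{3}\Bigr)\alpha\eta,
$$
and on the tangent side, if $\Delta_H V=\mu V$ then
$$
J_2^c(V)=(\mu-2m+2)\Bigl(\mu+\tfrac{2m^2-14m+12}{3}\Bigr)V.
$$
I would then use the known spectrum $\lambda_j=j(m+j-1)$ of $\Delta$ on functions, together with the Hodge-type splitting of the spectrum of $\Delta_H$ on $1$-forms into the gradient series $\mu=\lambda_j$ with $j\ge 1$ and the coclosed series $\mu=(k+1)(k+m-2)$ with $k\ge 1$ (at $k=1$ one recovers the Killing eigenvalue $\mu=2(m-1)$).

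The sign analysis is essentially a single case distinction. On the normal side, $\lambda_j-m$ is strictly negative only at $j=0$, while $\lambda_j+(2m^2-11m+6)/3$ is strictly positive for every $j\ge 1$ and reduces at $j=0$ to $(2m^2-11m+6)/3$, which is negative for $m\in\{1,2,3,4\}$ and positive for $m\ge 5$. The normal sector therefore contributes $0$ to the index when $m\le 4$ and $1$ (carried by the constants) when $m\ge 5$. On the tangent side, the Killing eigenspace $\mu=2(m-1)$ sits in the kernel because the first factor vanishes, every higher coclosed eigenvalue and every gradient eigenvalue with $j\ge 2$ makes both factors strictly positive, so the only possible negative contribution comes from the gradient eigenspace with $\mu=\lambda_1=m$, of dimension $m+1$; its eigenvalue is $(2-m)(2m^2-11m+12)/3$, which is strictly negative precisely for $m\ge 5$ and non-negative for $m\le 4$. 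Adding the two contributions yields index $0$ for $m\in\{1,2,3,4\}$ and $1+(m+1)=m+2$ for $m\ge 5$.

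The main obstacle is the bookkeeping reduction of \eqref{eq:intro-jacobi} to the two clean scalar formulas above: each of the twelve individual terms is elementary under $\tau(\phi)=0$ and $A_\eta=0$, but they are numerous and one must correctly combine the bitension contribution with the lower-order curvature term $\tfrac{2}{3}(m-1)(m-3)\,J(V)$ that distinguishes $J_2^c$ from $J_2$. Once that reduction is in place, the spectral argument proceeds by the routine sign analysis above.
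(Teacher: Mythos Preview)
Your proposal is correct and follows essentially the same route as the paper: split $\phi^{-1}T\mathbb{S}^{m+1}$ into normal and tangent parts, reduce $J_2^c$ to the two scalar formulas you display using $\tau(\phi)=0$ and $A=0$, and run the sign analysis against the known spectra $\lambda_j=j(m+j-1)$ and $\{\mu\}=\{\lambda_j:j\ge1\}\cup\{(k+1)(k+m-2):k\ge1\}$. The paper's computation is identical, pinpointing the same two negative eigenspaces for $m\ge5$ (the constants $\{a\eta\}$ on the normal side and $\{\grad\alpha:\Delta\alpha=\lambda_1\alpha\}$ on the tangent side) to obtain index $1+(m+1)=m+2$; one small caveat is that your claim ``$\lambda_j+(2m^2-11m+6)/3>0$ for every $j\ge1$'' fails at $j=1$ for $m\in\{1,2,3\}$, but this is harmless since the companion factor $\lambda_1-m$ vanishes there.
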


\begin{Theorem}
  We consider $\iota:\mathbb{S}^m\to\mathbb{S}^{m+1}$ the canonical inclusion of the totally geodesic hypersphere $\mathbb{S}^m$, thought of as a c-biharmonic map. Then, we have
  \begin{itemize}
	\item [i)] if $m\neq 2$ and $m\neq 4$, then the nullity of $\mathbb{S}^m$ is $(m+1)(m+2)/2$;
    \item[ii)] if $m=2$ or $m=4$, then the nullity of $\mathbb{S}^m$ is $(m+1)(m+4)/2$.
  \end{itemize}
\end{Theorem}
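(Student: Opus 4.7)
The plan is to exploit the orthogonal decomposition $C(\phi^{-1}T\mathbb{S}^{m+1})=C(N\mathbb{S}^m)\oplus C(T\mathbb{S}^m)$, which is preserved by $J_2^c$ as was established in the analysis preceding Theorem \ref{thm:stability-equator}. Since $J_2^c$ is self-adjoint and elliptic and the decomposition respects both factors, the total nullity equals the sum of the nullities of $\left.J_2^c\right|_{C(N\mathbb{S}^m)}$ and $\left.J_2^c\right|_{C(T\mathbb{S}^m)}$, which were computed explicitly in the arguments leading up to the previous theorem. So the proof reduces to collecting those two contributions and verifying that they sum to the claimed numbers.

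For the normal part, $\left.J_2^c\right|_{C(N\mathbb{S}^m)}$ acts on $\alpha\eta$ as multiplication by $(\lambda-m)(\lambda+(2m^2-11m+6)/3)$, and this vanishes precisely on the first eigenspace $\{\alpha\eta:\Delta\alpha=\lambda_1\alpha\}$, which has dimension $m+1$ (spanned by the restrictions to $\mathbb{S}^m$ of the ambient linear coordinate functions). No other eigenspace contributes, since the second factor is strictly positive for $\lambda=\lambda_j$ with $j\geq 2$. This gives the nullity $m+1$ on the normal factor, uniformly in $m$.

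For the tangential part, I would use the Hodge decomposition of vector fields on $\mathbb{S}^m$ into gradients $\operatorname{grad}\alpha$ and divergence-free fields. On divergence-free fields, $\Delta_H$ has eigenvalues $(k+1)(k+m-2)$, and the operator $\left.J_2^c\right|_{C(T\mathbb{S}^m)}$ acts as multiplication by $(\mu-2m+2)(\mu+(2m^2-14m+12)/3)$; the first factor forces $\mu=2(m-1)$, which picks out exactly the Killing vector fields of $\mathbb{S}^m$, a space of dimension $\dim\mathfrak{so}(m+1)=m(m+1)/2$. On gradients, $\mu=\lambda_j=j(m+j-1)$, and I check when either factor vanishes: the first factor gives $\mu=2(m-1)$, already accounted for; the second factor vanishes iff $\lambda_j=(14m-12-2m^2)/3$, and a short numerical check shows this coincides with $\lambda_1=m$ iff $m\in\{2,4\}$, contributing an extra $m+1$-dimensional space $\{\operatorname{grad}\alpha:\Delta\alpha=\lambda_1\alpha\}$ in exactly these two dimensions (note that for $m=2$ the Hodge decomposition is degenerate, but the Killing and gradient contributions remain linearly independent at this eigenvalue, so no double-counting occurs).

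Adding the contributions gives $m+1+m(m+1)/2=(m+1)(m+2)/2$ in general, and $(m+1)(m+2)/2+(m+1)=(m+1)(m+4)/2$ when $m\in\{2,4\}$, which is the claim. The only delicate point is making sure that for $m=2$, where $\operatorname{grad}\alpha$ (with $\alpha$ a first eigenfunction) is not divergence-free and hence does not overlap with the Killing space, so that the two contributions really are independent; this is immediate because $\Delta\alpha=\lambda_1\alpha$ with $\lambda_1=m\neq 0$ forces $\operatorname{div}(\operatorname{grad}\alpha)=-\Delta\alpha\neq 0$, ruling out any coincidence with the divergence-free sector.
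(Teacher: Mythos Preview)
Your overall strategy matches the paper's exactly, and the normal part is handled correctly. The tangential analysis, however, contains a computational error that happens to be self-cancelling, so the final numbers come out right for the wrong reason.

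You claim that the second factor $\mu+(2m^2-14m+12)/3$ vanishes at $\lambda_1=m$ precisely when $m\in\{2,4\}$. Solving $(14m-12-2m^2)/3=m$ gives $2m^2-11m+12=0$, whose roots are $m=4$ and $m=3/2$; so only $m=4$ arises this way, not $m=2$. For $m=2$ the extra gradient contribution instead comes from the \emph{first} factor: on gradient fields $\mu=\lambda_j$, and $\lambda_1=m=2(m-1)$ holds exactly when $m=2$, so the first factor $\mu-2m+2$ vanishes on $\{\operatorname{grad}\alpha:\Delta\alpha=\lambda_1\alpha\}$. You dismissed this case as ``already accounted for'' by the Killing fields, but that is not correct: the Killing fields are the divergence-free eigenvectors at $\mu=2(m-1)$, whereas the gradients of first eigenfunctions are a separate (orthogonal) subspace of the same Hodge eigenspace. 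For $m=2$ both subspaces sit inside $E_{2}$ and both are annihilated by $J_2^c$, giving the extra $m+1=3$.

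So the fix is: when analysing gradients, do not discard the first factor. Check for which $m$ one has $\lambda_j=2(m-1)$ (only $m=2$, $j=1$) and for which $m$ one has $\lambda_j=(14m-12-2m^2)/3$ (only $m=4$, $j=1$). Both cases contribute an additional $(m+1)$-dimensional space, which together with the Killing fields recovers the paper's statement.
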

From the above theorems it is not difficult to obtain the index and the nullity when we consider the identity map of $\mathbb{S}^m$. More precisely, we have

\begin{Cor}\label{stability-identity}
We consider $\Id:\mathbb{S}^m\to\mathbb{S}^{m}$ the identity map of $\mathbb{S}^m$, thought of as a c-biharmonic map. Then, we have
\begin{itemize}
	\item[i)] if $m=1$ or $m=3$, then the index and the nullity of $\mathbb{S}^m$ are $0$ and $m(m+1)/2$, respectively;
	\item[ii)] if $m=2$ or $m=4$, then the index and the nullity of $\mathbb{S}^m$ are $0$ and $(m+1)(m+2)/2$, respectively;
	\item[iii)] if $m\geq 5$, then the index and the nullity of $\mathbb{S}^m$ are $m+1$ and $m(m+1)/2$, respectively.
\end{itemize}
\end{Cor}

\subsection{The stability of the small hyperspheres of $\mathbb{S}^{m+1}$}
Let us consider $\iota:\mathbb{S}^m(r)\to\mathbb{S}^{m+1}$ the canonical inclusion of a small hypersphere of radius $r\in (0,1)$ in $\mathbb{S}^{m+1}$. We can choose the unit section in the normal bundle  $\eta=\left(\left(\sqrt{1-r^2}/r\right)\bar{x}, -r\right)$ and thus $A=A_\eta=-\left(\sqrt{1-r^2}/r\right)\Id$. We have the decomposition
\begin{align*}
	C\left(\iota^{-1}T\mathbb{S}^{m+1}\right) & =  \left\{\alpha\eta \ |\ \alpha\in C^{\infty}\left(\mathbb{S}^m(r)\right)\right\} \oplus  \left\{V\ |\ V\in C\left(T\mathbb{S}^m(r)\right)\right\}\\
	& = \left\{\alpha\eta \ |\ \alpha\in C^{\infty}\left(\mathbb{S}^m(r)\right)\right\} \oplus  \left\{\nabla \alpha\ |\ \alpha\in C^{\infty}\left(\mathbb{S}^m(r)\right)\right\}\\
	& \qquad \oplus \left\{V\in C\left(T\mathbb{S}^m(r)\right) \ |\ \Div V=0\right\},
\end{align*} 
and
\begin{align*}
	\oplus_{j=0}^{\infty}\left\{\alpha\eta \ |\ \Delta\alpha  = \lambda_j\alpha\right\} &\oplus\oplus_{j=1}^{\infty} \left\{\nabla\alpha \ |\ \Delta\alpha=\lambda_{j} \alpha\right\}\\
	& \oplus \oplus_{k=1}^\infty\left\{V \in C\left(T\mathbb{S}^m(r)\right) \ |\ \Div V=0, \Delta_H V=\mu_k V\right\}
\end{align*}
is dense in the space of the $L^2$-sections of $\iota^{-1}T\mathbb{S}^{m+1}$. Recall that 
$$
\lambda_j=\frac{j(m+j-1)}{r^2}, \ j\geq 0\qquad \text{and} \qquad \mu_k=\frac{(k+1)(k+m-2)}{r^2},\ k\geq 1.
$$

First, we consider the case $V=\alpha\eta$, where $\Delta\alpha=\lambda\alpha$. By standard computations, we obtain
$$
J(V)=\left(\frac{m\left(1-r^2\right)}{r^2}+\lambda-m\right)V-\frac{2\sqrt{1-r^2}}{r}\nabla\alpha, 
$$
\begin{align*}
J_2(V) = & \left(\left(\frac{m\left(1-r^2\right)}{r^2}+\lambda\right)^2+\frac{2\left(2\lambda-3m^2\right)\left(1-r^2\right)}{r^2}-2m\lambda+m^2\right)V \\
& +\frac{2\sqrt{1-r^2}}{r}\left(\frac{(m+1)r^2-2}{r^2}-2\lambda+3m-1\right)\nabla\alpha
\end{align*}
and thus
\begin{align*}
J_2^c(V) = & \left(\left(\frac{m\left(1-r^2\right)}{r^2}+\lambda\right)^2+\frac{2\left(2\lambda-3m^2\right)\left(1-r^2\right)}{r^2}-2m\lambda+m^2\right.\\
&\quad\left.+\frac{2}{3}\frac{(m-1)(m-3)}{r^2}\left(\frac{m\left(1-r^2\right)}{r^2}+\lambda-m\right) \vphantom{\left(\frac{m\left(1-r^2\right)}{r^2}+\lambda\right)^2}\right)V \\
	&+\frac{2\sqrt{1-r^2}}{r}\left(\frac{(m+1)r^2-2}{r^2}-2\lambda+3m-1-\frac{2}{3}\frac{(m-1)(m-3)}{r^2}\right)\nabla\alpha.
\end{align*}

If we consider $V=a\eta$, $a\in \mathbb{R}$, we get
\begin{align*}
	J_2^c(V)  = & \left(\left(\frac{m\left(1-r^2\right)}{r^2}\right)^2-\frac{6m^2\left(1-r^2\right)}{r^2}+m^2\right.\\
	&\quad \left.+\frac{2}{3}\frac{(m-1)(m-3)}{r^2}\left(\frac{m\left(1-r^2\right)}{r^2}-m\right)\vphantom{\left(\frac{m\left(1-r^2\right)}{r^2}\right)^2}\right)V \\
	 = & \frac{m}{r^4}\left(8mr^4
	-4\left(\frac{m^2}{3}+\frac{2m}{3}+1\right)r^2
	+\frac{2m^2}{3}-\frac{5m}{3}+2\right)V.
\end{align*}

Now, we can see that, if $m=1$ or $m=3$ and $r=1/\sqrt{2}$, or $m=2$ and $r=1/\sqrt{3}$, or $m=4$ and $r=\sqrt{3}/2$, we have $\left(J_2^c(V), V\right)<0$. Therefore the index of $\mathbb{S}^m(r)$ is at least $1$. We note that, as in the standard biharmonic case, the index is produced by the sections normal to the hypersphere.

Second, we consider the case $V=\nabla\alpha$, where $\alpha\in C^\infty\left(\mathbb{S}^m(r)\right)$ satisfies $\Delta\alpha=\lambda\alpha$, and we infer

$$
J(V)=\left(\frac{2-r^2-m}{r^2}+\lambda-m+1\right)V-\frac{2\sqrt{1-r^2}}{r}\lambda\alpha\eta,
$$
\begin{align*}
J_2(V)=&\left(\left(\frac{2-r^2-m}{r^2}+\lambda\right)^2+(1-m)(1-m+2\lambda)\right.\\
&\quad \left.+\frac{\left(1-r^2\right)\left(4\lambda-m^2\right)-\left(2-r^2-m\right)(2m-2)}{r^2}\vphantom{\left(\frac{2-r^2-m}{r^2}+\lambda\right)^2}\right)V\\
&+\frac{2\sqrt{1-r^2}}{r}\left(-\frac{2-r^2}{r^2}-2\lambda+4m-1\right)\lambda\alpha\eta
\end{align*}
and therefore
\begin{align*}
	J_2^c(V)=&\left(\left(\frac{2-r^2-m}{r^2}+\lambda\right)^2+(1-m)(1-m+2\lambda)\right.\\
	&\quad +\frac{\left(1-r^2\right)\left(4\lambda-m^2\right)-\left(2-r^2-m\right)(2m-2)}{r^2}\\
	&\quad \left. +\frac{2}{3}\frac{(m-1)(m-3)}{r^2}\left(\frac{2-r^2-m}{r^2}+\lambda-m+1\right)\vphantom{\left(\frac{2-r^2-m}{r^2}+\lambda\right)^2}\right)V\\
	&+\frac{2\sqrt{1-r^2}}{r}\left(-\frac{2-r^2}{r^2}-2\lambda+4m-1-\frac{2}{3}\frac{(m-1)(m-3)}{r^2}\right)\lambda\alpha\eta.
\end{align*}

In the last case, we assume that $V\in C\left(T\mathbb{S}^m(r)\right)$ satisfies $\Div V=0$ and $\Delta_H V=\mu V$. By straightforward computations, we obtain
$$
J(V)=\left(\frac{2-r^2-m}{r^2}+\mu+1-m\right)V,
$$
$$
J_2(V)=\left(\left(\frac{2-r^2-m}{r^2}+\mu\right)^2+2(1-m)\left(\frac{2-r^2-m}{r^2}+\mu\right)-\frac{m^2\left(1-r^2\right)}{r^2}+(1-m)^2\right)V
$$
and 
\begin{align}\label{J2c-divV}
J_2^c(V)= &\left(\left(\frac{2-r^2-m}{r^2}+\mu\right)^2+2(1-m)\left(\frac{2-r^2-m}{r^2}+\mu\right)-\frac{m^2\left(1-r^2\right)}{r^2}+(1-m)^2\right. \nonumber\\
&\quad \left. +\frac{2}{3}\frac{(m-1)(m-3)}{r^2}\left(\frac{2-r^2-m}{r^2}+\mu+1-m\right)\vphantom{\left(\frac{2-r^2-m}{r^2}+\mu\right)^2}\right)V.
\end{align}

We denote by  $S_0=\left\{a\eta\ |\ a\in\mathbb{R}\right\}$, $S_j=\left\{\alpha\eta \ |\ \Delta \alpha=\lambda_j\alpha\right\}\oplus \left\{\nabla\alpha \ |\ \Delta\alpha=\lambda_j\alpha\right\}$ and $\tilde{S}_k=\left\{V\in C\left(T\mathbb{S}^m(r)\right)\ |\  \Div V=0, \Delta_H V=\mu_k V\right\}$. We note that
$$
J_2^c\left(S_0\right)\subset S_0, \qquad J_2^c\left(S_j\right)\subset S_j, \ j\geq 1, \qquad J_2^c\left(\tilde{S}_k\right)\subset \tilde{S}_k, \ k\geq 1
$$
and these spaces are mutually orthogonal.

Let $\left\{\alpha_{j,1}, \dots, \alpha_{j,s}\right\}$ be an $L^2$-orthonormal basis in $\left\{\alpha\in C^\infty\left(\mathbb{S}^m(r)\right)\ |\ \Delta\alpha=\lambda_j\alpha\right\}$, $j\geq 1$. Then,
$$
\mathcal{B}_j=\left\{\alpha_{j,1}\eta, \dots, \alpha_{j,s}\eta, \frac{1}{\sqrt{\lambda_j}}\nabla\alpha_{j,1},\dots, \frac{1}{\sqrt{\lambda_j}}\nabla\alpha_{j,s} \right\}
$$
is an $L^2$-orthonormal basis in $S_j$ with respect to $\mathcal{B}_j$, $J_2^c$ restricted to $S_j$ has the following matrix representation

\begin{equation}\label{matrix}	
\left[
\begin{array}{c|c}
	\begin{array}{c c c c c c}
a & 0 & 0  & \dots &  0 & 0 \\
0 & a & 0 & \dots &  0 & 0 \\
0 & 0 & a & \dots &  0 & 0 \\
\vdots & \vdots & \vdots & \dots &  \vdots & \vdots \\ 
0 & 0 & 0 & \dots &  a & 0 \\
0 & 0 & 0 & \dots &  0 & a 
\end{array} & \begin{array}{c c c c c c}
d & 0 & 0 & \dots & 0 & 0  \\
 0 & d & 0 & \dots & 0 & 0  \\
 0 & 0 & d & \dots & 0 & 0  \\
\vdots & \vdots & \vdots & \dots & \vdots & \vdots\\ 
 0 & 0 & 0 & \dots & d & 0  \\
 0 & 0 & 0 & \dots & 0 & d 
\end{array}
\\ \hline
\begin{array}{c c c c c c}
d & 0 & 0  & \dots &  0 & 0 \\
0 & d & 0 & \dots &  0 & 0 \\
0 & 0 & d & \dots &  0 & 0 \\
\vdots & \vdots & \vdots & \dots &  \vdots & \vdots \\ 
0 & 0 & 0 & \dots &  d & 0 \\
0 & 0 & 0 & \dots &  0 & d 
\end{array} & \begin{array}{c c c c c c}
b & 0 & 0 & \dots & 0 & 0  \\
0 & b & 0 & \dots & 0 & 0  \\
0 & 0 & b & \dots & 0 & 0  \\
\vdots & \vdots & \vdots & \dots & \vdots & \vdots\\ 
0 & 0 & 0 & \dots & b & 0  \\
0 & 0 & 0 & \dots & 0 & b 
\end{array}
\end{array}
\right]
\end{equation}

where 

\begin{align*}
a=a_j = & \left(J_2^c\left(\alpha_{j,\mathcal{l}}\eta\right),\alpha_{j,\mathcal{l}}\eta\right)\\
 = & \left(\frac{m\left(1-r^2\right)}{r^2}+\lambda_j\right)^2+\frac{2\left(2\lambda_j-3m^2\right)\left(1-r^2\right)}{r^2}-2m\lambda_j+m^2\\
& +\frac{2}{3}\frac{(m-1)(m-3)}{r^2}\left(\frac{m\left(1-r^2\right)}{r^2}+\lambda_j-m\right), \qquad \mathcal{l}\in \left\{1, 2,\dots, s\right\},
\end{align*}

\begin{align*}
b=b_j = & \left(J_2^c\left(\frac{1}{\sqrt{\lambda_j}}\nabla\alpha_{j,\mathcal{l}}\right),\frac{1}{\sqrt{\lambda_j}}\nabla\alpha_{j,\mathcal{l}}\right)\\
 = & \left(\frac{2-r^2-m}{r^2}+\lambda_j\right)^2+(1-m)(1-m+2\lambda_j)\\
& + \frac{\left(1-r^2\right)\left(4\lambda_j-m^2\right)-\left(2-r^2-m\right)(2m-2)}{r^2}\\
& +\frac{2}{3}\frac{(m-1)(m-3)}{r^2}\left(\frac{2-r^2-m}{r^2}+\lambda_j-m+1\right), \qquad \mathcal{l}\in \left\{1, 2,\dots, s\right\},
\end{align*}
and
\begin{align*}
d=d_j & = \left(J_2^c\left(\alpha_{j,\mathcal{l}}\eta\right),\frac{1}{\sqrt{\lambda_j}}\nabla\alpha_{j,\mathcal{l}}\right)\\
	& = \frac{2\sqrt{1-r^2}}{r}\left(\frac{(m+1)r^2-2}{r^2}-2\lambda_j+3m-1-\frac{2}{3}\frac{(m-1)(m-3)}{r^2}\right)\sqrt{\lambda_j}, \\
	&\mathcal{l}\in \left\{1, 2,\dots, s\right\}.
\end{align*}

From \eqref{matrix} we see that $\left.J_2^c\right|_{S_j}$, $j\geq 1$, has two eigenvalues. Their sum is $a+b$ and their product is $ab-d^2$. 

Now, if we consider the particular case $m=4$ and $r=\sqrt{3}/2$, by a straightforward computation we obtain: for $j=1$, we have $a=b=d=0$, i.e., both eigenvalues of $\left.J_2^c\right|_{S_1}$ are $0$, and for $j\geq 2$ we have $a+b>0$ and $ab-d^2>0$, i.e, both eigenvalues of $\left.J_2^c\right|_{S_j}$ are positive with the same multiplicity. Therefore, we get that the index  of $\left.J_2^c\right|_{S_j}$, $j\geq 1$, is $0$, and the nullity of $\left.J_2^c\right|_{S_j}$ is $2(m+1)=10$ if $j=1$ and $0$ if $j\geq 2$. Next, if we consider $V\in C\left(T\mathbb{S}^m(r)\right)$ with $\Div V=0$ and $\Delta_H V=\mu_k V$,  from \eqref{J2c-divV} we get
$$
J_2^c(V)=\frac{1}{3}\left(\mu_k-8\right)\left(3\mu_k-8\right)V.
$$
Therefore, we get that the index of $\left.J_2^c\right|_{\tilde{S}_k}$ is $0$, for any $k\geq 1$, and the nullity of $\left.J_2^c\right|_{\tilde{S}_k}$ is $m(m+1)/2=10$ if $k=1$ and $0$ if $k\geq 2$.

For the case $m=2$ and $r=1/\sqrt{3}$, i.e., the case $iii)$ in Theorem \ref{thm:inclusion-sphere}, we have:
for $j=1$, we have $a+b>0$ and $ab-d^2=0$, i.e., one eigenvalue of $\left.J_2^c\right|_{S_1}$ is $0$ and the other one is positive, and for $j\geq 2$ we have $a+b>0$ and $ab-d^2>0$, i.e, both eigenvalues of $\left.J_2^c\right|_{S_j}$ are positive. Therefore, we get that the index  of $\left.J_2^c\right|_{S_j}$ is $0$, for any $j\geq 1$,  and the nullity of $\left.J_2^c\right|_{S_j}$ is $m+1=3$ if $j=1$ and $0$ if $j\geq 2$. Further, if we consider $V\in C\left(T\mathbb{S}^m(r)\right)$ with $\Div V=0$ and $\Delta_H V=\mu_k V$,  from \eqref{J2c-divV} we get
$$
J_2^c(V)=\mu_k\left(\mu_k-6\right)V.
$$
Therefore, we get that the index  of $\left.J_2^c\right|_{\tilde{S}_k}$ is $0$, for any $k\geq 1$, and the nullity of $\left.J_2^c\right|_{\tilde{S}_k}$ is $m(m+1)/2=3$ if $k=1$ and $0$ if $k\geq 2$.

We can summarize the above results and state

\begin{Theorem}\label{thm:stability-hypersphere}
We consider $\iota:\mathbb{S}^m(r)\to\mathbb{S}^{m+1}$ the canonical inclusion of the small hypersphere of radius $r$. We have
\begin{itemize}
		\item [i)] if $m=1$ or $m=3$ and $r=1/\sqrt{2}$, then the index of $\mathbb{S}^m(r)$ is $1$ and its nullity is $(m+1)(m+2)/2$;
		\item[ii)] if $m=2$ and $r=1/\sqrt{3}$, then the index of $\mathbb{S}^m(r)$ is $1$ and its nullity is $6$;
		\item[iii)] if $m=4$ and $r=\sqrt{3}/2$, then the index of $\mathbb{S}^m(r)$ is $1$ and its nullity is $20$.
\end{itemize}
\end{Theorem}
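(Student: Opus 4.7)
The strategy is to exploit the $L^{2}$-orthogonal decomposition
\[
L^{2}\bigl(\phi^{-1}T\mathbb{S}^{m+1}\bigr)=S_{0}\ \oplus\ \bigoplus_{j\geq 1}S_{j}\ \oplus\ \bigoplus_{k\geq 1}\tilde S_{k}
\]
introduced in the preceding discussion, and to compute the signature of $J_{2}^{c}$ on each invariant summand. Because $J_{2}^{c}$ preserves each of these mutually orthogonal spaces, the index and nullity decompose additively, and the problem reduces to a case analysis for the four admissible pairs $(m,r)$ of Theorem~\ref{thm:inclusion-sphere}, using the explicit formulas for $J_{2}^{c}$ already derived on sections of each type.

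On $S_{0}=\{a\eta:a\in\mathbb{R}\}$ the operator acts as multiplication by the explicit scalar
\[
\frac{m}{r^{4}}\!\left(8mr^{4}-4\!\left(\frac{m^{2}}{3}+\frac{2m}{3}+1\right)\!r^{2}+\frac{2m^{2}}{3}-\frac{5m}{3}+2\right).
\]
A direct check shows that this scalar is strictly negative in each of the four admissible cases, so $J_{2}^{c}|_{S_{0}}$ contributes one negative eigenvalue to the index and nothing to the nullity. On each $\tilde S_{k}$ the operator likewise acts as a scalar of the form computed in \eqref{J2c-divV}; for every admissible $(m,r)$ this scalar vanishes precisely at $k=1$, so $\tilde S_{1}$ consists of the Killing vector fields of $\mathbb{S}^{m}(r)$ and contributes $m(m+1)/2$ to the nullity, while the scalar is strictly positive for all $k\geq 2$.

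The main computation is on the spaces $S_{j}$ for $j\geq 1$. In the orthonormal basis $\{\alpha_{j,\ell}\eta,\ \lambda_{j}^{-1/2}\grad\alpha_{j,\ell}\}_{\ell}$, the restriction of $J_{2}^{c}$ is block-diagonal with identical $2\times 2$ blocks $\bigl(\begin{smallmatrix}a_{j}&d_{j}\\ d_{j}&b_{j}\end{smallmatrix}\bigr)$, whose two eigenvalues are encoded by the trace $a_{j}+b_{j}$ and the determinant $a_{j}b_{j}-d_{j}^{2}$. The expected pattern is that $a_{1}b_{1}-d_{1}^{2}=0$ with $a_{1}+b_{1}>0$, producing a single zero eigenvalue and contributing $\dim S_{1}=m+1$ to the nullity, while for $j\geq 2$ both trace and determinant are strictly positive, so the block is positive definite and contributes nothing. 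The case $(m,r)=(4,\sqrt{3}/2)$ is exceptional: the whole block at $j=1$ degenerates to zero, i.e.\ $a_{1}=b_{1}=d_{1}=0$, so the nullity contribution from $S_{1}$ doubles to $2(m+1)=10$.

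Summing the contributions yields $\operatorname{Index}(\phi)=1$ in every case, and $\operatorname{Nullity}(\phi)=(m+1)+m(m+1)/2=(m+1)(m+2)/2$ in cases (i) and (ii), respectively $2(m+1)+m(m+1)/2=20$ in case (iii), as claimed. The main technical obstacle is establishing the uniform strict positivity of $a_{j}b_{j}-d_{j}^{2}$ for all $j\geq 2$: viewed as a polynomial in $\lambda_{j}=j(m+j-1)/r^{2}$ it vanishes at $\lambda_{1}$ by the $j=1$ analysis, so one can factor out $(\lambda-\lambda_{1})$ and must then verify, with the specific $r$ substituted, that the remaining factor is strictly positive on $[\lambda_{2},\infty)$. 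This is an explicit but case-dependent algebraic verification that has to be carried out separately in each of the three statements.
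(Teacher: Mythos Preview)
Your proposal is correct and follows essentially the same approach as the paper: the same $L^{2}$-decomposition into $S_{0}$, $S_{j}$, $\tilde S_{k}$, the same $2\times 2$ block analysis on $S_{j}$ via trace and determinant, and the same identification of the exceptional degeneracy $a_{1}=b_{1}=d_{1}=0$ in the case $m=4$. The only presentational difference is that for $m=1$ and $m=3$ the paper observes $(m-1)(m-3)=0$, so $J_{2}^{c}=J_{2}$, and simply cites the known biharmonic index and nullity from the literature rather than redoing the block computation; your uniform treatment works just as well and yields the same numbers.
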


\begin{Bem}
We can give a geometric description of the space that provides the index and the kernel of the Jacobi operator $J_2^c$ in the previous theorem. Indeed, when $m=2$, $J_2^c$ is negative definite on
$$
\left\{a\eta \ | \ a\in\mathbb{R}\right\}
$$
and
$$
\Ker J_2^c =  \left\{ V\in C\left(T\mathbb{S}^m(r)\right)\ | \ V \ \text{is Killing}\right\}\oplus \left\{\frac{3\sqrt{2}}{2}\alpha\eta+\nabla\alpha \ |\ \Delta\alpha=\lambda_1\alpha\right\},
$$
where $\lambda_1=6$.

When $m=4$, $J_2^c$ is negative definite on 
$$
\left\{a\eta \ | \ a\in\mathbb{R}\right\}
$$
and
$$
\Ker J_2^c = \left\{ V\in C\left(T\mathbb{S}^m(r)\right)\ | \ V \ \text{is Killing}\right\} \oplus \left\{\nabla\alpha \ |\ \Delta\alpha=\lambda_1 \alpha\right\} \oplus \left\{\alpha\eta \ | \ \Delta\alpha=\lambda_1\alpha\right\},
$$
where $\lambda_1=16/3$.
\end{Bem}

\section{Further results on c-biharmonic stability}


In the following, we present a number of further, more general, statements
on the stability of conformal-biharmonic maps which do not assume that both domain and target manifolds are spheres.

The next result can be considered as a generalization of Theorem \ref{thm:stability-equator} to the case of general target manifolds. Moreover, it can also be seen as a version of Xin's result \cite{MR0587168} for c-biharmonic maps. 

\begin{Theorem}\label{xin-generalization}
Let $\phi:\mathbb{S}^m\to N^n$ be a non-constant harmonic map, considered as a c-biharmonic map. If $m\geq 5$, then $\phi$ is unstable.
\end{Theorem}

\begin{proof}
It is not difficult to check that
$$
J_2^c(V)= J_2(V)+\frac{2(m-3)(m-1)}{3} J(V), \qquad \forall V\in C\left(\phi^{-1}TN\right).
$$
Then, we consider $V=d\phi(\nabla \alpha)$, where $\alpha$ is the restriction of a linear function on $\mathbb{R}^{m+1}$ to $\mathbb{S}^m$. Since $\phi$ is harmonic, we can prove (see \cite{MR0587168}) that
$$
J(V)=(2-m)V,
$$
so
$$
J_2(V)=J(J(V))=(2-m)^2V
$$
and
$$
J_2^c(V)=\frac{(2-m)\left(2m^2-11m+12\right)}{3}V.
$$
As $\phi$ is not a constant map, we can see that there exists $V$ such that it is not the zero section in the pull-back bundle $\phi^{-1}TN$, and the conclusion follows.
\end{proof}

When $\phi$ is a harmonic map from an Einstein domain manifold in a sphere, using an idea of Leung \cite{MR0673586}, we also can give some conditions that force the map to be unstable.

\begin{Theorem}\label{luang-result}
Let $\phi:M^m\to\mathbb{S}^n$ be a non-constant harmonic map, where $M$ is a compact Einstein manifold with $\Ric=\lambda \Id$ and $\lambda$ is a real constant. Consider $\phi$ as a c-biharmonic map and define a covariant $2$-tensor field on $M$, $\omega(X,Y)=\langle d\phi(X),d\phi(Y)\rangle$, for any $X,Y\in C(TM)$. If
\begin{equation}\label{ineq-luang}
\int_M\left(4|\omega|^2+(n-4)|d\phi|^4\right) \ v_g < \frac{2(m-3)(n-2)\lambda}{3}\int_M |d\phi|^2 \ v_g,
\end{equation}
then $\phi$ is unstable.
\end{Theorem}

\begin{proof}
By direct computations we obtain
$$
J_2^c(V)=J_2(V)+\frac{2(m-3)\lambda}{3}J(V),\qquad \forall V\in C\left(\phi^{-1}T\mathbb{S}^n\right).
$$
Then, we consider $V=\left(\nabla^{\mathbb{S}^n} \beta\right)\circ\phi$, where $\beta$ is the restriction of a linear function on $\mathbb{R}^{n+1}$ to $\mathbb{S}^n$. As the map $\phi$ is harmonic, for $\beta\left(\bar{y}\right)=\langle\bar{a}, \bar{y}\rangle$, $\bar{y}\in \mathbb{S}^n$, we obtain 
$$
J(V)=2\tr\langle d\phi(\cdot),\bar{a}\rangle d\phi(\cdot)-|d\phi|^2V
$$  
and 
\begin{align*}
\left(J_2(V),V\right)=\int_M \left(4\left|\tr\langle d\phi(\cdot),\bar{a}\rangle d\phi(\cdot)\right|^2-4|d\phi|^2 \left|\tr \langle d\phi(\cdot),\bar{a}\rangle(\cdot)\right|^2+|d\phi|^4 \left(\left|\bar{a}\right|^2-(\beta\circ\phi)^2\right)\right) \ v_g.
\end{align*}
Next, we consider $\left\{\bar{a}_k\right\}_{k=\overline{1,n+1}}$ to be an orthonormal basis in $\mathbb{R}^{n+1}$ and take $\bar{a}=\bar{a}_k$. Denoting $\beta_k\left(\bar{y}\right)=\langle \bar{a}_k,\bar{y}\rangle$ and $V_k=\left(\nabla^{\mathbb{S}^n}\beta_k\right)\circ \phi$, we obtain
\begin{align*}
	\sum_{k=1}^{n+1}\left(J_2^c\left(V_k\right),V_k\right) = \int_M\left(4|\omega|^2+(n-4)|d\phi|^4 - \frac{2(m-3)(n-2)\lambda}{3} |d\phi|^2 \right)\ v_g.
\end{align*}
Thus, we conclude.
\end{proof}

\begin{Bem}
For an isometric immersion, $|d\phi|^2=m$, $|\omega|^2=m$ and thus \eqref{ineq-luang} becomes
\begin{equation}\label{ineq2-luang}
3\left(4+m(n-4)\right)<2(m-3)(n-2)\lambda.
\end{equation}
When we consider $M^m$ the equator of $\mathbb{S}^{m+1}$ as in Theorem \ref{thm:stability-equator}, since $\lambda=m-1$, from \eqref{ineq2-luang} we reobtain that if $m\geq 5$, then $M$ is unstable. If we consider $M^m$ the minimal generalized Clifford torus as in  Proposition \ref{thm:c-biharmonic-clifford}, i), then $\lambda=m-2$ and we can see that \eqref{ineq2-luang} is satisfied for any $m\geq 6$ and even. Thus, such tori are unstable.
\end{Bem}

Since for any map $|\omega|^2\leq |d\phi|^4$, in the hypotheses of Theorem \ref{luang-result}, we also can state the following.

\begin{Cor}
If
$$
n\int_M|d\phi|^4 \ v_g < \frac{2(m-3)(n-2)\lambda}{3}\int_M |d\phi|^2 \ v_g,
$$
then $\phi$ is unstable.
\end{Cor}

Further, we note that Theorem \ref{thm:stability-hypersphere} can be seen as a particular case of the following result which involves the stress-energy tensor $S$ associated with the energy functional $E$.

\begin{Theorem}
Let $\phi:M^m\to\mathbb{S}^n$ be a non-harmonic c-biharmonic map, where $M$ is a compact Einstein manifold with $\Ric=\lambda \Id$ and $\lambda$ is a real constant. If $\Div S=0$ or $|\tau(\phi)|$ is constant, then $\phi$ is unstable.
\end{Theorem}

\begin{proof}
We know that
$$
J_2^c(V)=J_2(V)+\frac{2(m-3)\lambda}{3}J(V),\qquad \forall V\in C\left(\phi^{-1}T\mathbb{S}^n\right).
$$
We choose $V=\tau(\phi)$ and, denoting $\theta(X)=\langle d\phi(X), \tau(\phi)\rangle=-(\Div S)(X)$, for any $X\in C(TM)$, we infer
$$
J(\tau(\phi))=-\frac{2(m-3)\lambda}{3}\tau(\phi), 
$$
$$
J_2(\tau(\phi))=\left(\frac{4(m-3)^2\lambda^2}{9}+4\Div\theta^{\#}-4|\tau(\phi)|^2\right)\tau(\phi)-2d\phi\left(\nabla \left(|\tau(\phi)|^2\right)\right),
$$
where $\theta^{\#}$ is the vector field associated to the $1$-form $\theta$ by the musical isomorphism.
Thus, since $\theta=0$ of $|\tau(\phi)|$ is constant we obtain
$$
\left(J_2^c(\tau(\phi)),\tau(\phi)\right)=-4\int_M|\tau(\phi)|^4\ v_g<0.
$$
\end{proof}

From the proof of Theorem \ref{thm:stability-hypersphere} we see that if $V\in C\left(T\mathbb{S}^m(r)\right)$, then $\left(J_2^c(V), V\right)\geq 0$. In a different way, without using the splitting technique, we can prove the following result concerning the contribution of the tangent vector fields.

\begin{Prop}
Let $\iota:\mathbb{S}^m(r)\to\mathbb{S}^{m+1}$ be the c-biharmonic canonical inclusion given in Theorem \ref{thm:inclusion-sphere}, $r\in (0,1)$, and consider $V\in C\left(T\mathbb{S}^m(r)\right)$. Then, $\left(J_2^c(V), V\right)\geq 0$ and $J_2^c(V)=0$ if and only if
\begin{itemize}
\item [i)] $m\in\{1,2,3\}$ and $V$ is a Killing vector field;
\item[ii)] $m=4$ and either $V$ is a Killing vector field, or $V=\nabla \alpha$ with $\Delta\alpha=\lambda_1\alpha$.
\end{itemize}
\end{Prop}

\begin{proof}
By a straightforward computation we have
$$
J(V)=\bar{\Delta}V-(m-1)V,
$$
and
$$
J_2(V)=\bar{\Delta}\bar{\Delta}V-(2m-1)\bar{\Delta}V+\left(\bar{\Delta}V\right)^\top+\left((m-1)^2-\frac{m^2\left(1-r^2\right)}{r^2}\right)V-\frac{2m\sqrt{1-r^2}}{r}\Div V\eta,
$$
where $(\cdot)^\top$ denotes the tangent part of a section in the pull-back bundle.
Now, since 
$$
J_2^c(V)=J_2(V)+\frac{2}{3}\frac{(m-1)(m-3)}{r^2} J(V),
$$
we achieve 
\begin{align*}
\left(J_2^c(V),V\right)=  \int_M & \left(\left|\bar{\Delta}V-(m-1)V\right|^2-\frac{1}{r^2}\left(m^2\left(1-r^2\right)+\frac{2}{3}(m-1)^2(m-3)\right)|V|^2\right.\\
&\quad \left. +\frac{2}{3}\frac{(m-1)(m-3)}{r^2}\langle \bar{\Delta}V,V\rangle\right)\ v_g.
\end{align*}
Further, it is not difficult to see that
$$
\bar{\Delta}V=\frac{2\sqrt{1-r^2}}{r}\Div V\eta +\Delta_H V-2\Ric V+\frac{m-r^2}{r^2}V.
$$
As the hypersphere $\mathbb{S}^m(r)$ is c-biharmonic in $\mathbb{S}^{m+1}$, we know that 
$$
r^2=\frac{2m^2-5m+6}{6m}.
$$
Using also the integral formula of Yano that holds for any tangent vector field (see \cite{MR0046122})
$$
\int_M\langle \Delta_HV-2\Ric V,V\rangle\ v_g=\int_M\left(\frac{1}{2}\left|\mathcal{L}_Vg\right|^2 - (\Div V)^2\right)\ v_g, 
$$
where $\mathcal{L}_Vg$ denotes the Lie derivative of the domain metric $g$ with respect to $V$, we obtain
$$
\left(J_2^c(V),V\right)= \int_M \left(\left|\Delta_HV-2\Ric V\right|^2+\frac{1}{r^2}\left(-\frac{7m^2-22m+12}{3m}(\Div V)^2+\frac{m}{2}\left|\mathcal{L}_Vg\right|^2\right)\right)\ v_g.
$$
Next, taking into account that 
$$
\left|\mathcal{L}_Vg\right|^2\geq \frac{4}{m}(\Div V)^2,
$$
see for example \cite{MR2135286}, if follows that
$$
\left(J_2^c(V),V\right)\geq \int_M\left(\left|\Delta_HV-2\Ric V\right|^2-\frac{1}{r^2}\frac{7m^2-28m+12}{3m}(\Div V)^2\right) \ v_g.
$$
Now, if $\Div V=0$, then $\left(J_2^c(V),V\right)\geq 0$ and we can see that $\left(J_2^c(V),V\right)=0$ implies $V$ is a Killing vector field and, in fact, $J_2^c(V)=0$. 

If $V=\nabla \alpha$ and $m=1$ or $m=3$, we get $\left(J_2^c(V),V\right)>0$.

If $V=\nabla \alpha$ with $\Delta\alpha=\lambda_j\alpha$, $j\geq 1$ and $m=2$ or $m=4$, we obtain
\begin{equation*}
\left(J_2^c(V),V\right)\geq\lambda_j\left(\lambda_j^2-\frac{1}{r^2}\frac{19m^2-40m+12}{3m}\lambda_j+\frac{4(m-1)^2}{r^4}\right)\int _M \alpha^2\ v_g.
\end{equation*} 
Now, by standard arguments, we conclude.
\end{proof}

\bibliographystyle{plain}
\bibliography{mybib}

\end{document}